\frenchspacing \linespread{1.0}
\theoremstyle{plain}
\newtheorem{lem}{Lemma}[section]
\newtheorem{prop}{Proposition}[section]
\newtheorem{theo}{Theorem}[section]
\newtheorem{definition}{Definition}[section]
\newtheorem{cor}{Corollary}[section]
\newtheorem{rem}{Remark}[section]
\theoremstyle{plain}
\title{Homological properties of certain generalized Jacobian Poisson
structures in dimension 3}
\author{S.R. Tagne Pelap\\
  University of Luxembourg, Mathematics Research Unit\\
  E-mail address: serge.pelap@uni.lu}
\begin{document}
\maketitle
\abstract{
The unimodularity condition for a Poisson structure (ie., a Poisson structure with a trivial modular class) induces a Poincaré duality between its Poisson homology and its Poisson cohomology. Therefore an information about the Poisson homology of this kind Poisson structures induces by duality an information about its Poisson cohomology and vise versa. But this is not longer true in the case of a non trivial modular class. That is the case of Generalized Jacobian Poisson Structures (GJPS). In this paper, we consider certain GJPS in dimension 3 and obtain properties of their Poisson homological groups and their Poisson cohomological groups. More precisely, under some assumptions, we obtain the Poincaré series of these Poisson homological groups and we compute explicitly these Poisson cohomological groups, except the second group which seems more complicated to obtain.
\section*{Introduction}
The Poisson homology has been introduced independently by Brylinski \cite{bry}(as an important tool in the computation of Hochschild homology and the cyclic homology), and by Koszul and Gelfand-Dorfman (inspired by the algebraic approach in the study of bi-hamiltonian structures). There exists a dual notion, the Poisson cohomology, introduced by Lichnerowicz \cite{lic}. This duality is however very subtle. One obtains the Poincaré duality only for a certain class of Poisson structures, called unimodular Poisson structures. The obstructions to the unimodularity are contained in the modular class introduced by Weinstein \cite{weins}. Note also that this kind of duality exists between the Hochschild homology and the Hochschild cohomology for a class of associative algebras called Calabi-Yau, introduced recently by Ginzburg \cite{ginz}, as an algebraic tool of the study of Calabi-Yau manifolds and the mirror symmetry. Hopefully, one can have some relation between the unimodular Poisson structures on a smooth manifold and the Calabi-Yau algebras. This link has been clarified  recently by Dolgushev \cite{dol}. He showed that the deformation quantization of a Poisson algebra $(\mathcal A, \pi_1=\{\cdot, \cdot\})$ on a smooth Poisson manifold  is Calabi-Yau if and only if the formal Poisson bracket $\pi_h=0+\pi_1h+\pi_2h^2+\cdots,$ (where $h$ is the deformation parameter) associated to the star product, is unimodular.  But the problem is that $\pi_1$  can be unimodular while $\pi_h$ is not. This is the case of algebras given by a potential in dimension 3, introduced by Ginzburg \cite{ginz}, which arise as deformations of Jacobian Poisson structures (unimodular), but which are not always Calabi-Yau algebras \cite{boc}. It means that when we deform a unimodular Poisson brackets, the trivial modular class can be deformed to a non trivial one. Then it is natural to think that the obstructions to the unimodularity when one deforms a unimodular Poisson structure should be contained in some cohomological class. The goal of this work is not to study directly this problem, but to study the properties of Poisson homological groups and Poisson cohomological groups of certain Generalized Jacobian Poisson structures in dimension 3 which have a non trivial modular class. This will allow us to understand the role played by the modular class on the "Poincaré duality".\\
The paper is organized as follows: in Section 1, we remain elementary notions about Poisson algebras and Poisson manifolds. In Section 2, we give the definition of the Poisson (co)homology, the modular class and the unimodularity. Section 3 and Section 4 are devoted respectively to some vectorial notations and homological tools we will use later. In the last two sections (5 and 6), we obtain respectively properties of the Poisson homology and the Poisson cohomology of certain GJPS in dimension 3.
\section{Poisson algebras and Poisson manifolds}
Let us start by recalling some preliminary facts concerning Poisson algebras and Poisson manifolds.\\
In the whole paper, $K$ is a field of characteristic zero.\\
Let $\mathcal R$ be a commutative $K$-algebra. $\mathcal R$ is a Poisson algebra if it is a Lie algebra such that the bracket is also a biderivation. This is equivalent to say that $\mathcal R$ is endowed with a $K$-bilinear map $\{\cdot, \cdot\} : \mathcal R\times \mathcal R\longrightarrow\mathcal R$ which satisfies the following properties :\\
$\begin{array}{lr}
   \{a, bc\}=\{a, b\}c+b\{a, c\} & \mbox{(The Leibniz rule)};\\
   \{a, b\}=-\{b, a\}& \mbox{(antisymmetry)};\\
  \{a, \{b, c\}\}+\{b, \{c, a\}\}+\{c, \{a, b\}\}=0& \mbox{(The Jacobi identity)}.\\
\end{array}$\\
where $a, b, c\in\mathcal R.$\\
Then the bracket $\{\cdot, \cdot\}$ is called the Poisson bracket on $\mathcal R$ and one can also say that $\mathcal R$ is endowed with a Poisson structure.
The elements of the center of a Poisson algebra $(\mathcal R, \{\cdot, \cdot\})$ are called the Casimirs and we denote them by $\mathcal C(\mathcal R, \{\cdot, \cdot\})$ or just $\mathcal C(\mathcal R)$, when there is no confusion.  $a\in\mathcal C(\mathcal R, \{\cdot, \cdot\})$ iff $\{a,b\}=0,$ for all $b\in\mathcal R.$\\

A manifold $M$ (smooth, algebraic,...) is said to be a Poisson manifold if its function algebra $\mathcal A$ ($C^\infty(M)$, regular,...) is endowed with a Poisson bracket.\\
Let us consider some specific examples of Poisson algebras. Let
$$q_1=\frac{1}{2}(x_1^2 + x_2^2+x_3^2),$$
$$q_2=\frac{1}{2}(x_0^2+J_1x_1^2 +J_2x_2^2+ J_3x_3^2),$$
be two elements of $\mathbb C[x_0, x_1, x_2, x_3]$ where $J_i\in\mathbb C.$\\
We have a Poisson structure $\pi$ on $\mathbb C^4,$ or on $\mathbb C[x_0, x_1, x_2, x_3],$  such that the bracket between the coordinate functions are defined by (mod $4$):
$$\{x_{0}, x_{i}\}=-J_{jk}x_{j}x_{k};$$
$$\{x_j, x_{k}\}=-x_{0}x_{i},$$
where $J_{jk}=-(J_j-J_k)$ and $(i, j, k)$ forms a cyclic permutation of $(1,2,3).$\\

These algebras, called Sklyanin algebras, were discovered by E.Sklyanin within his
studies of integrable discrete and continuous Landau-Lifschits models by the Quantum
Inverse Scattering Method \cite{sky1},\cite{sky2}. These algebras are intensively studied (\cite{stafnev} \cite{odfe1}, \cite{smsf}, \cite{pelap1}, \cite{pelap2}).
We will denote these algebras by $q_4(\mathcal E),$ where $\mathcal E$ represents the elliptic curve which parameterizes the algebra (via $J_1, J_2, J_3$). We can also think about this curve $\mathcal E$ as being a geometric interpretation of the couple $q_1=0$, $q_2=0$ embedded in $\mathcal CP^3$ (as was observed in Sklyanin's initial paper). The Sklyanin algebras are also called elliptic due to their relation with the elliptic curves. It is interesting to note that almost at the same time where the Sklyanin algebras were introduced, the elliptic algebras with 3 generators were discovered and studied by M. Artin, J. Tate, T.A. Nevins, J.T. Stafford and their
students and collaborators among whom we should mention M. Van den Bergh whose
contribution to the study of the Sklyanin elliptic algebras is very important \cite{ATvdB},\cite{arsc}.
\\

These two Poisson algebras (Sklyanin and Artin-Tate) have two important generalizations: the first from the work of Feigin and Odesskii \cite{odfe1}, $q_{n,k}(\mathcal E),$ where $\mathcal E$ is the elliptic curve and $n, k$ are mutually prime integers. This class of Poisson algebras appears as a quasi-classical limit of quadratic associative algebras denoted by $Q_{n,k}(\mathcal, \eta),$ where $\eta$ is a generic point on the elliptic curve $\mathcal E.$ These are called Sklyanin-Odesskii-Feigin Poisson algebras or just elliptic Poisson algebras. Recently, in a paper \cite{ORT} in collaboration with Giovanni Ortenzi and vladimir Rubtsov, we focussed ourself on the important invariance property of elliptic algebras. Let us remind that if we have an $n-$dimensional vector space $V$ and
fixed a base $v_0,\ldots,v_{n-1}$ of $V$ then the \emph{Heisenberg group of level $n$ in the Schr\"{o}dinger
representation} is the subgroup $H_n\subset GL(V )$ generated by the operators
$$\sigma: v_i \to v_{i-1}; \tau : v_i \to \varepsilon_i(v_i); (\varepsilon_i)^n = 1; 0\leq i \leq n-1.$$
This group has order $n^3$ and is a central extension
$$1 \to {\mathbb U}_n \to H_n \to \mathbb Z_n \times \mathbb Z_n \to 1,$$
where $\mathbb U_n$ is the group of $n-$th roots of unity. This action provides the automorphisms
of the elliptic algebra which are compatible with the grading and defines also an
action on the "quasi-classical" limit of the elliptic algebras - the elliptic Poisson structures $q_{n,k}(\mathcal E)$. We showed that the extension of the Heisenberg group action to more wide family of polynomial quadratic Poisson structures, called $H$-invariant Poisson structures, guarantees also some good and useful features and among
them the \emph{unimodularity} property.

The second generalization is the class Poisson algebras defined as follows: consider $n-2$ polynomials $Q_i$ in $K^n$ with coordinates $x_i$, $i=1,...,n.$ We can define, for any polynomial $\lambda\in K[x_1,...,x_{n}]$, a bilinear differential operation :\\
$$\{\cdot ,\cdot\} : K[x_1,...,x_{n}]\otimes K[x_1,...,x_{n}]\longrightarrow K[x_1,...,x_{n}]$$
by the formula
\begin{equation}\label{q}
\{f,g\}=\lambda\frac{df\wedge dg\wedge dQ_1\wedge...\wedge dQ_{n-2}}{dx_1\wedge dx_2\wedge...\wedge dx_{n}},\ \  \  f,g\in K[x_1,...,x_{n}]
 \end{equation}
This operation gives rise to a Poisson algebra structure on $K[x_1,...,x_{n}].$
The polynomials $Q_i, i=1,...,n-2$ are Casimir functions for the brackets defined by the equation (\ref{q}) and if $Q_1, \cdots, Q_{n-2}$ are functionally independent, then any Casimir is an element of polynomial algebra $K[Q_1, \cdots, Q_{n-2}].$ It has been proven that any Poisson structure on $K^n$, with $n-2$ generic Casimirs $Q_i$, can be written in this form. Every Poisson structure of this form is called a Jacobian Poisson structure (JPS) (~\cite{khi1}, ~\cite{khi2}) for $\lambda=1$ or generalized Jacobian Poisson Structure (GJPS) when $\lambda$ does not belong to $K[Q_1, \cdots, Q_{n-2}].$\\

The main property of JPS and Sklyanin-Odesskii-Feigin Poisson algebras, or more generally quadratic $H$-invariant Poisson algebras, is that they are unimodular, but this is no longer true for GJPS.

\section{Poisson (co)homology-unimodularity}
Let $(\mathcal A, \pi=\{\cdot, \cdot\})$ be a Poisson algebra.
\subsection{Poisson (co)homology complex}
We recall that the $\mathcal A$-module of K\"ahler differentials of $\mathcal A$ is denoted by $\Omega^1(\mathcal A)$ and the graded $\mathcal A$-module $\Omega^p(\mathcal A) :=\bigwedge^p\Omega^1(\mathcal A)$ is the module of all K\"ahler $p$-differential forms.
As a vector space, respectively $\mathcal A$-module, $\Omega^p(\mathcal A)$ is generated by elements of the form $FdF_1\wedge...\wedge dF_p$, respectively $dF_1\wedge...\wedge dF_p$, where $F, F_i\in\mathcal A$, $i=1,...,p$.
We denote by $\Omega^{\bullet}(\mathcal A)=\displaystyle{\oplus_{p\in\mathbb N}}\Omega^p(\mathcal A)$, with the convention that $\Omega^0(\mathcal A)=\mathcal A$, the space of all K\"ahler differential forms.\\
The differential $d : \mathcal A\longrightarrow\Omega^1(\mathcal A)$ extends to a graded $K$-linear map $$d :\Omega^\bullet(\mathcal A)\longrightarrow\Omega^{\bullet+1}(\mathcal A)$$
by setting :
$$d(GdF_1\wedge...\wedge dF_p) :=dG\wedge dF_1\wedge...\wedge dF_p$$
for $G,F_1,...,F_p\in\mathcal A$, where $p\in\mathbb{N}$. It is called the de Rham  differential. It is a graded derivation, of degree $1$, of $(\Omega^\bullet(\mathcal A), \wedge)$, such that $d^2=0$. The resulting complex is called the de Rham complex and its cohomology is the de Rham cohomology of $\mathcal A$.\\

A skew-symmetric $k$-linear map $P\in\mbox{Hom}_K(\wedge^k\mathcal A,\mathcal A)$ is called a skew-symmetric $k$-derivation of $\mathcal A$ with values in $\mathcal A$ if it is a derivation in each of its arguments. The $\mathcal A$-module of skew-symmetric $k$-derivation is denoted by $\mathcal X^k(\mathcal A).$ We define the graded $\mathcal A$-module
$$\mathcal X^\bullet(\mathcal A):=\displaystyle{\bigoplus_{k\in\mathbb N}}\mathcal X^k(\mathcal A)$$
whose elements are called skew-symmetric multi-derivations. By convention, the first term in this sum, $\mathcal X^0$, is $\mathcal A.$\\

Let us now introduce two complexes: a chain and a cochain complex. The first is given by the Poisson boundary operator, also called the Brylinsky or Koszul differential and denoted by $${\partial : \Omega^\bullet(\mathcal A)\longrightarrow\Omega^{\bullet-1}(\mathcal A)}$$
$\partial=[i_{\pi}, d]=i_{\pi}\circ d-d\circ i_{\pi},$
where $i_{\pi}$ is the contraction associated to $\pi.$ More generally, for all $Q\in\mathcal X^q(\mathcal A),$ we can associate the following $\mathcal A$-morphism:
$$i_Q : \Omega^p(\mathcal A)\longrightarrow \Omega^{p-q}(\mathcal A)$$
$$i_Q(df_1\wedge\cdots\wedge df_p)=\displaystyle{\sum_{\sigma\in S_{q, p-q}}}(-1)^{|\sigma|}Q(f_{\sigma(1)},\cdots, f_{\sigma(q)})df_{\sigma(q+1)}\wedge\cdots df_{\sigma(p)},$$
if $p\geq q$ and $i_Q(df_1\wedge\cdots\wedge df_p)=0$ if not.\\
We denote by $S_{p,q}$ the set of all $(p,q)-$shuffles, that is permutations $\sigma$ of the set $\{1,...,p+q\}$ such that $\sigma(1)<...<\sigma(p)$ and $\sigma(p+1)<...<\sigma(p+q), p, q\in\mathbb N $.\\
One can check, by a direct computation, that $\partial_k$ is well-defined and a boundary operator: $\partial_k\circ\partial_{k+1}=0.$\\
The homology of this complex is called the Poisson homology associated to $(\mathcal A, \pi)$ and is denoted by $PH_{\bullet}(\mathcal A, \pi).$
The dual notion of this is the Poisson cohomology: the Poisson coboundary operator associated with $(\mathcal A, \pi)$ is given by
$$\delta : \mathcal X^\bullet(\mathcal A)\longrightarrow\mathcal X^{\bullet+1}(\mathcal A)$$
$\delta(Q)=-[Q,\pi]_S,$ where $[\cdot, \cdot]_S$ is the Schouten bracket:\\
$$[\cdot,\cdot]_S : \mathcal X^p(\mathcal A)\times\mathcal X^q(\mathcal A)\longrightarrow\mathcal X^{p+q-1}(\mathcal A),$$
defined by \\
$\begin{array}{c}
[P, Q]_S(F_1,...,F_{p+q-1})=\displaystyle{\sum_{\sigma\in S_{q,p-1}}}\epsilon(\sigma)P(Q(F_{\sigma(1)},...,F_{\sigma(q)}),F_{\sigma(q+1)},...,F_{\sigma(q+p-1)})\\
\end{array}$\\
$\begin{array}{cccccccccccc}
 & & & & & & & & & & &-(-1)^{(p-1)(q-1)}\displaystyle{\sum_{\sigma\in S_{p,q-1}}}\epsilon(\sigma)Q(P(F_{\sigma(1)},...,F_{\sigma(p)}),F_{\sigma(p+1)},...,F_{\sigma(p+q-1)})\\
\end{array}$\\ \\
for $P\in\mathcal X^p(\mathcal A)$, $Q\in\mathcal X^q(\mathcal A)$, and for $F_1,...,F_{p+q-1}\in\mathcal A$
for $p, q\in\mathbb N.$ By convention, $S_{p,-1}:=\emptyset$ and $S_{-1,q}:=\emptyset$, for $p, q\in\mathbb N.$
One can check, by a direct computation, that $\delta^k$ is well-defined and a coboundary o\-pe\-ra\-tor, $\delta^{k+1}\circ\delta^{k}=0.$\\
The cohomology of this complex is called the Poisson cohomology associated with $(\mathcal A, \pi)$ and denoted by $PH^{\bullet}(\mathcal A, \pi).$
\subsection{Unimodular Poisson structure}
In this part, we consider the affine space of dimension $n,$ $K^n,$ and its algebra of regular functions $\mathcal A=K[x_1,...,x_n]$. Let $\mu=dx_1\wedge\cdots\wedge dx_n.$ More generally, we can consider the smooth algebra of an oriented manifold and fix a volume form $\mu.$ \\
The family of maps $\star^{\mu} : \mathcal X^k(\mathcal A)\longrightarrow\Omega^{n-k}(\mathcal A)$, $\star Q=i_Q(\mu)$ are isomorphisms which give us a Poincaré duality between the multiderivations and the Kälher differential forms.\\ \\
Assume that a Poisson structure, $\pi,$ is given on $\mathcal A.$ It is natural to ask whether we have the same duality between the Poisson cohomology and the Poisson homology. Generally, the answer to this question is negative. Besides, it is easy to see that the answer depends on the Poisson structure we have.\\

Let  $$D_\bullet^{\mu} := (\star^{\mu})^{-1}\circ d\circ\star^{\mu} : \mathcal X^\bullet(\mathcal A)\longrightarrow\mathcal X^{\bullet -1}(\mathcal A)$$
be the pullback of the de Rham differential under the isomorphism $\star^{\mu}.$\\
We can compute $D_2^{\mu}(\pi)$ and we have the following relation between the Poisson boundary and Poisson coboundary:
$$-\star^{\mu}(D_2(\pi)\wedge Q)=\star^{\mu}\delta(Q)+(-1)^{q}\partial(\star^{\mu}(Q)),$$
for all $Q\in\mathcal X^q(\mathcal A).$ Hence if $D_2(\pi)=0,$ then the Poisson coboundary is nothing but the pullback of the Poisson boundary under the isomorphism $\star^{\mu}$ and in this case, we have a Poincaré duality between the Poisson cohomology and the Poisson homology. But more generally, one can prove that the vector field $D_2(\pi)$ is a Poisson $1$-cocycle depending on the choice of the volume form $\mu.$ But its first Poisson cohomological class does not depend on the choice of the volume form $\mu$ and it is called the modular class of the Poisson structure $\pi.$\\
We say that a Poisson bracket $\pi$ on $\mathcal A$ is unimodular if its modular class is trivial. In this case, we get a Poincaré's duality between the Poisson homology and the Poisson cohomology \cite{xu1}. This is the case for JPS ~\cite{khi2} and for quadratic $H$-invariant Poisson structures ~\cite{ORT}. But that is no longer true for GJPS. Our purpose is to study how  a non trivial modular class modifies the "Poincaré duality" in the particular case of GJPS in dimension 3.
\section{Vector notations}
We are going to present some vector notations which we shall use afterwards. Let us consider the following applications and differential operators : \\
$\begin{array}{cccc}
  \times : & \mathcal A^3\times\mathcal A^3 &\longrightarrow & \mathcal A^3 \\
    &\left(\begin{array}{ccc}
                         \overrightarrow{X}=\left(
                             \begin{array}{c}
                               X_1 \\
                               X_2\\
                               X_3 \\
                             \end{array}
                           \right)
          & ,&\overrightarrow{Y}=\left(
                             \begin{array}{c}
                               Y_1 \\
                               Y_2\\
                               Y_3 \\
                             \end{array}
                           \right)\end{array}\right)&                    \longmapsto & \overrightarrow{X}\times\overrightarrow{Y}=\left(
                                                                \begin{array}{c}
                                                                  X_2Y_3-X_3Y_2 \\
                                                                X_3Y_1-X_1Y_3 \\
                                                                  X_1Y_2-X_2Y_1 \\
                                                                \end{array}
                                                              \right)
\end{array}$\\ \\
$\begin{array}{cc}
\begin{array}{cccc}
   \overrightarrow{\nabla} :& \mathcal A &\longrightarrow & \mathcal A^3 \\
    &F&                    \longmapsto &\overrightarrow{\nabla}F=\left(
                                \begin{array}{c}
                                  \frac{\partial F}{\partial x} \\
                                  \frac{\partial F}{\partial y}\\
                                  \frac{\partial F}{\partial z} \\
                                \end{array}
                              \right)\
    \end{array}&\begin{array}{cccc}
  \overrightarrow\nabla\times : & \mathcal A^3 &\longrightarrow & \mathcal A^3 \\
    &\overrightarrow{Y}=\left(\begin{array}{c}
                               Y_1 \\
                               Y_2\\
                               Y_3 \\
                             \end{array}
                           \right)
                           &\longmapsto & \overrightarrow\nabla\times\overrightarrow{Y}=\left(
                                                                \begin{array}{c}
                                                                  \frac{\partial Y_3}{\partial y}-\frac{\partial Y_2}{\partial z} \\
                                                                  \frac{\partial Y_1}{\partial z}-\frac{\partial Y_3}{\partial x}\\
                                                                  \frac{\partial Y_2}{\partial x}-\frac{\partial Y_1}{\partial y}\\
                                                                \end{array}
                                                              \right)
\end{array}
\end{array}$\\ \\
$\begin{array}{cccc}
   \mbox{Div}(\cdot) :& \mathcal A^4 &\longrightarrow & \mathcal A \\
    &\overrightarrow{K}=\left(
                         \begin{array}{c}
                           K_1\\
                           K_2\\
                           K_3 \\
                         \end{array}
                       \right)
    &                    \longmapsto &\mbox{Div}(\overrightarrow{K})=\displaystyle{\sum_{i=1}^{3}}\frac{\partial K_i}{\partial x_i}
    \end{array}$\\ \\
    We denote by $"\cdot"$ the scalar product in $\mathcal A^3.$\\
    By direct computation, we obtain the following properties :
   \begin{prop}\label{pr1}
The previous operators satisfy the following properties :
\begin{enumerate}
    \item[1.]\label{al1} $\overrightarrow{\nabla}\times F\overrightarrow{G}=F\overrightarrow{\nabla}\times\overrightarrow{G} +\overrightarrow{\nabla}F\times\overrightarrow{G},$\ \ $F\in\mathcal A,$ $\overrightarrow{G}\in\mathcal A^3;$
  \item[2.]\label{al2} $Div(F\overrightarrow{G})=\overrightarrow{\nabla}F\cdot\overrightarrow{G}+FDiv(\overrightarrow{G}),$\ \ $F\in\mathcal A,$ $\overrightarrow{G}\in\mathcal A^3;$
\item[3.]\label{al3} $Div(\overrightarrow{F}\times\overrightarrow{G})=\overrightarrow{G}\cdot (\overrightarrow{\nabla}\times\overrightarrow{F})-\overrightarrow{F}\cdot(\overrightarrow{\nabla}\times\overrightarrow{G}),$ $ \overrightarrow{F}, \ \overrightarrow{G}\in\mathcal A^3;$
\item[4.]\label{al4} $\overrightarrow{F}\cdot(\overrightarrow{G}\times\overrightarrow{H})=\overrightarrow{G}\cdot(\overrightarrow{H}\times\overrightarrow{F}),$\ \ $\overrightarrow{F}$ $\overrightarrow{G},$ $\overrightarrow{H}\in\mathcal A^3.$
\end{enumerate}
\end{prop}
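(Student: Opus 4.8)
The plan is to verify each of the four identities by direct computation on the generators, using the explicit coordinate formulas given for $\times$, $\overrightarrow{\nabla}$, $\overrightarrow{\nabla}\times$, $\mathrm{Div}$ and the scalar product. Since all the maps involved are $K$-bilinear (or additive in the relevant slots) and built from the partial derivatives $\partial/\partial x$, $\partial/\partial y$, $\partial/\partial z$, which are derivations of $\mathcal{A}$, it suffices in each case to expand both sides componentwise and match coefficients.

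For item 1, I would write $\overrightarrow{G}=(G_1,G_2,G_3)$ and compute the first component of $\overrightarrow{\nabla}\times(F\overrightarrow{G})$, namely $\partial_y(FG_3)-\partial_z(FG_2)$; applying the Leibniz rule to each term gives $F(\partial_y G_3-\partial_z G_2)+(\partial_y F)G_3-(\partial_z F)G_2$, which is exactly the first component of $F\,\overrightarrow{\nabla}\times\overrightarrow{G}+\overrightarrow{\nabla}F\times\overrightarrow{G}$; the other two components follow by the cyclic symmetry of the formulas. Item 2 is the same one-line Leibniz computation applied to $\sum_i\partial_{x_i}(FG_i)=\sum_i\big((\partial_{x_i}F)G_i+F\,\partial_{x_i}G_i\big)$. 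Item 4 is purely algebraic: both $\overrightarrow{F}\cdot(\overrightarrow{G}\times\overrightarrow{H})$ and $\overrightarrow{G}\cdot(\overrightarrow{H}\times\overrightarrow{F})$ expand to the $3\times 3$ determinant $\det(\overrightarrow{F},\overrightarrow{G},\overrightarrow{H})$, and cyclic permutation of the columns leaves the determinant unchanged.

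Item 3 is the only one that requires a genuine (if still routine) expansion. I would expand $\mathrm{Div}(\overrightarrow{F}\times\overrightarrow{G})=\partial_x(F_2G_3-F_3G_2)+\partial_y(F_3G_1-F_1G_3)+\partial_z(F_1G_2-F_2G_1)$, apply the Leibniz rule to each of the six terms, and then regroup the twelve resulting summands: the six terms in which a derivative falls on a component of $\overrightarrow{F}$ assemble into $\overrightarrow{G}\cdot(\overrightarrow{\nabla}\times\overrightarrow{F})$, and the six in which a derivative falls on $\overrightarrow{G}$ assemble into $-\overrightarrow{F}\cdot(\overrightarrow{\nabla}\times\overrightarrow{G})$, the signs working out because $\partial_x F_3\cdot G_2$ and $-\partial_x G_2\cdot F_3$ etc. are precisely the cross terms appearing in the curl-dot-product expansions. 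The main (and essentially only) obstacle is bookkeeping: keeping the cyclic indexing consistent and not dropping a sign when collecting the twelve terms. No Poisson-theoretic input is needed — this proposition is a self-contained statement about vector calculus over the commutative algebra $\mathcal{A}$, and the commutativity of $\mathcal{A}$ together with the derivation property of the partials is all that is used.
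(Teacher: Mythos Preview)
Your proposal is correct and matches the paper's approach: the paper states just before the proposition that these properties are obtained ``by direct computation'' and gives no further proof, which is exactly the coordinate-by-coordinate Leibniz-rule verification you outline. There is nothing more to add.
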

According to the definition of K\"ahler differentials and skew-symmetric multi-derivations, we have the following isomorphisms of $\mathcal A$-modules \\
$$\begin{array}{ccc}\Omega^1(\mathcal A)&\stackrel{\cong}{\longrightarrow} &\mathcal A^3\\
F_1dx+F_2dy+F_3dz&\longmapsto &(F_1,F_2, F_3)\\
\end{array}$$
$$\begin{array}{ccc}\Omega^2(\mathcal A)&\stackrel{\cong}{\longrightarrow} &\mathcal A^3\\
F_1dy\wedge dz+F_2dz\wedge dx+F_3dx\wedge dy&\longmapsto &(F_1, F_2, F_3)\\
\end{array}$$
$$\begin{array}{ccc}\Omega^3(\mathcal A)&\stackrel{\cong}{\longrightarrow} &\mathcal A\\
Udx\wedge dy\wedge dz&\longmapsto &U\\
\end{array}$$

$$\begin{array}{ccc}\mathcal X^1(\mathcal A)&\stackrel{\cong}{\longrightarrow} &\mathcal A^3\\
F_1\frac{\partial}{\partial x}+F_2\frac{\partial}{\partial y}+F_3\frac{\partial}{\partial z}&\longmapsto &(F_1,F_2, F_3)\\
\end{array}$$
$$\begin{array}{ccc}\mathcal X^2(\mathcal A)&\stackrel{\cong}{\longrightarrow} &\mathcal A^3\\
F_1\frac{\partial}{\partial y}\wedge \frac{\partial}{\partial z}+F_2\frac{\partial}{\partial z}\wedge \frac{\partial}{\partial x}+F_3\frac{\partial}{\partial x}\wedge \frac{\partial}{\partial y}&\longmapsto &(F_1, F_2, F_3)\\
\end{array}$$
$$\begin{array}{ccc}\mathcal X^3(\mathcal A)&\stackrel{\cong}{\longrightarrow} &\mathcal A\\
U\frac{\partial}{\partial x}\wedge \frac{\partial}{\partial y}\wedge \frac{\partial}{\partial z}&\longmapsto &U\\
\end{array}$$
According to the previous isomorphisms, we can write the de Rham complex in terms of elements of $\mathcal A$, $\mathcal A^4$ and $\mathcal A^6$ :\\
\begin{equation}\label{qa}
\begin{array}{ccccccccccc}
  K&{\hookrightarrow}&\mathcal A & \stackrel{d}{\longrightarrow} & \mathcal A^3 & \stackrel{d}{\longrightarrow} & \mathcal A^3 & \stackrel{d}{\longrightarrow} & \mathcal A & \stackrel{d}{\longrightarrow} &0\\
   & &F & \longmapsto & \overrightarrow{\nabla}F &  &  &  &  &  &\\
   & && & \overrightarrow{F} &\longmapsto & \overrightarrow{\nabla}\times\overrightarrow{F} & &  &  & \\
& &  &  &  &  &\overrightarrow{K} & \longmapsto & \mbox{Div}(\overrightarrow{K}) &
\end{array}
\end{equation}
\begin{prop}{\textnormal{(Poincaré's lemma)}}
The de Rham complex $(\ref{qa})$ of the polynomial algebra $\mathcal A=K[x, y, z]$ is exact.
\end{prop}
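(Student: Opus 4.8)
This is the classical Poincar\'e lemma, and the route I would take is a contracting homotopy built from the Euler vector field, which is both clean and transparently polynomial. Grade $\Omega^\bullet(\mathcal A)$ by assigning weight $1$ to each $x_i$ and weight $1$ to each $dx_i$; then every nonzero $k$-form splits into weighted-homogeneous components of weights $\ge k$, and both $d$ and the contraction $i_E$ by $E=x\frac{\partial}{\partial x}+y\frac{\partial}{\partial y}+z\frac{\partial}{\partial z}$ preserve this weight. A short computation shows that the operator $\mathcal L_E:=d\circ i_E+i_E\circ d$ multiplies the weight-$m$ component by $m$ (on functions this is Euler's relation $E(f)=(\deg f)f$, and it propagates to forms because $\mathcal L_E$ is a degree-$0$ derivation with $\mathcal L_E(dx_i)=dx_i$). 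Since $\mathrm{char}\,K=0$, I may therefore define a degree-lowering $K$-linear operator $h$ acting as $\frac1m\,i_E$ on the weight-$m$ component for $m\ge 1$ and as $0$ on the weight-$0$ component $K$; then $d\circ h+h\circ d=\mathrm{id}$ on every form of positive weight. As any $k$-form with $k\ge 1$ has positive weight, this yields exactness of $(\ref{qa})$ at both copies of $\mathcal A^3$ and at the terminal $\mathcal A$, while exactness at the first $\mathcal A$ is the elementary fact that a polynomial all of whose partial derivatives vanish is a constant (again using $\mathrm{char}\,K=0$), i.e.\ $\ker(\overrightarrow\nabla)=K$. Note that $h$ visibly sends polynomial forms to polynomial forms, since it only divides weighted-homogeneous forms by positive integers.

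Since the later computations will want explicit primitives, I would then translate $h$ into the vector notation of $(\ref{qa})$. For a $1$-form corresponding to $\overrightarrow F=\sum_{m\ge 0}\overrightarrow F^{(m)}$, with entries homogeneous of degree $m$, one gets $h\overrightarrow F=\sum_{m\ge 0}\frac{1}{m+1}\,\overrightarrow F^{(m)}\cdot(x,y,z)=\int_0^1\overrightarrow F(tx,ty,tz)\cdot(x,y,z)\,dt=:g$, and the homotopy identity says precisely that $\overrightarrow\nabla\times\overrightarrow F=0$ implies $\overrightarrow\nabla g=\overrightarrow F$. For a $2$-form $\overrightarrow K$ one finds $h\overrightarrow K=\int_0^1 t\,\big(\overrightarrow K(tx,ty,tz)\times(x,y,z)\big)\,dt=:\overrightarrow G$, and $\mathrm{Div}(\overrightarrow K)=0$ implies $\overrightarrow\nabla\times\overrightarrow G=\overrightarrow K$. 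Surjectivity of $\mathrm{Div}$ at the top is even more direct: for $U\in\mathcal A$ the field $\overrightarrow K=(\Phi,0,0)$ with $\Phi(x,y,z)=\int_0^x U(s,y,z)\,ds\in\mathcal A$ has $\mathrm{Div}(\overrightarrow K)=U$. A reader preferring to dispense with $h$ can instead verify $\overrightarrow\nabla g=\overrightarrow F$ and $\overrightarrow\nabla\times\overrightarrow G=\overrightarrow K$ by hand, differentiating under the integral sign via $\frac{d}{dt}\big[H(tx,ty,tz)\big]=(x,y,z)\cdot(\overrightarrow\nabla H)(tx,ty,tz)$ and invoking the hypotheses together with the identities of Proposition~\ref{pr1} to collapse each integrand into a total $t$-derivative.

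There is no serious obstacle in this proof; the two points to keep an eye on are that the homotopy stay polynomial — which the weight grading makes transparent — and that the hypothesis $\mathrm{char}\,K=0$ is genuinely used, both to divide by the weight $m$ and for $\ker(\overrightarrow\nabla)=K$. If one wanted a proof with no integrals at all, one could instead induct on the number of variables, comparing the de Rham complex of $K[x,y,z]=K[x,y][z]$ with that of $K[x,y]$ through a mapping-cone argument whose only input is that $\frac{d}{dz}$ is surjective on $K[x,y][z]$ with kernel $K[x,y]$; but the Euler-field homotopy is shorter and better suited to the vector formalism of this section. Finally, since under the $\mathcal A$-module isomorphisms fixed just above $(\ref{qa})$ coincides with the algebraic de Rham complex $(\Omega^\bullet(\mathcal A),d)$, the same argument shows the latter is exact, which is the form in which the lemma will be used later.
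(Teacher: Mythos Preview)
Your proof is correct. The paper states this proposition without proof, treating it as the classical algebraic Poincar\'e lemma; your Euler-field contracting homotopy is the standard argument and is entirely appropriate here, with the explicit integral primitives nicely matching the vector notation used throughout the paper.
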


\begin{prop}
According to the previous isomorphisms, the Poisson boundary operators associated with the GJPS on algebra $\mathcal A$ given by two polynomials, $\lambda$ and the Casimir $P,$ can be written, using the vector notations, as follows:\\
\begin{equation}\label{hc1}
0\longrightarrow\mathcal A\stackrel{\partial_3}{\longrightarrow}\mathcal A^3\stackrel{\partial_2}{\longrightarrow}\mathcal A^3\stackrel{\partial_1}{\longrightarrow}\mathcal A\longrightarrow 0
\end{equation}\\
where:\\ \\
$\partial_1(\overrightarrow{H})=-\lambda(\overrightarrow\nabla\times\overrightarrow H)\cdot \overrightarrow\nabla P,\  \overrightarrow H\in\mathcal A^3$\\ \\
$\partial_2(\overrightarrow{G})= -\overrightarrow\nabla(\lambda\overrightarrow G\cdot\overrightarrow\nabla P)+ \lambda Div(\overrightarrow G)\overrightarrow\nabla P, \ \overrightarrow G\in\mathcal A^3$\\ \\
$\partial_3(U)=-\overrightarrow\nabla(\lambda U)\times\overrightarrow\nabla P,\ U\in \mathcal A.$
\end{prop}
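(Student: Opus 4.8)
The plan is to unwind the defining formula $\partial=[i_\pi,d]=i_\pi\circ d-d\circ i_\pi$ one degree at a time, translating each term through the dictionary of Section~3 that identifies $\Omega^\bullet(\mathcal A)$ and $\mathcal X^\bullet(\mathcal A)$ with $\mathcal A\oplus\mathcal A^3\oplus\mathcal A^3\oplus\mathcal A$. The only ingredient specific to the Poisson structure is the bivector itself: evaluating the GJPS bracket $(\ref{q})$ with $n=3$ and single Casimir $P$ on the coordinate functions gives $\{x,y\}=\lambda\,\partial P/\partial z$, $\{y,z\}=\lambda\,\partial P/\partial x$ and $\{z,x\}=\lambda\,\partial P/\partial y$, so that under the isomorphism $\mathcal X^2(\mathcal A)\cong\mathcal A^3$ the Poisson tensor $\pi$ corresponds to the vector field $\overrightarrow W:=\lambda\overrightarrow\nabla P$. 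Everything else is bookkeeping with the contraction $i_\pi$ and with the vector form $(\ref{qa})$ of the de Rham differential $d$.

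First I would record how $i_\pi$ acts on a $p$-form through the shuffle formula for $i_Q$. On a $1$-form it vanishes for degree reasons. On a $2$-form $\beta\leftrightarrow\overrightarrow G$ only the trivial $(2,0)$-shuffle contributes, giving the scalar $i_\pi\beta=\overrightarrow W\cdot\overrightarrow G=\lambda\,\overrightarrow G\cdot\overrightarrow\nabla P$. On the volume form the three $(2,1)$-shuffles assemble, without cancellation, to $i_\pi(U\,dx\wedge dy\wedge dz)=U\,\overrightarrow W=\lambda U\,\overrightarrow\nabla P$, viewed as a $1$-form. Substituting these into $\partial=i_\pi d-d\,i_\pi$ and reading $d$ off $(\ref{qa})$ then yields directly: for a $1$-form $\overrightarrow H$, $\partial_1(\overrightarrow H)=i_\pi(d\overrightarrow H)=i_\pi(\overrightarrow\nabla\times\overrightarrow H)=-\lambda\,(\overrightarrow\nabla\times\overrightarrow H)\cdot\overrightarrow\nabla P$; for a $2$-form $\overrightarrow G$, using $d\overrightarrow G\leftrightarrow\mathrm{Div}(\overrightarrow G)$ and the expression for $i_\pi\beta$ above, $\partial_2(\overrightarrow G)=\mathrm{Div}(\overrightarrow G)\,\overrightarrow W-d\big(\lambda\,\overrightarrow G\cdot\overrightarrow\nabla P\big)=\lambda\,\mathrm{Div}(\overrightarrow G)\,\overrightarrow\nabla P-\overrightarrow\nabla\big(\lambda\,\overrightarrow G\cdot\overrightarrow\nabla P\big)$; and for a $3$-form $U$, since $d(U\,dx\wedge dy\wedge dz)=0$, $\partial_3(U)=-d\big(\lambda U\,\overrightarrow\nabla P\big)=-\overrightarrow\nabla\times\big(\lambda U\,\overrightarrow\nabla P\big)$. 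It remains to simplify the last expression: by property~1 of Proposition~\ref{pr1} with $F=\lambda U$ and $\overrightarrow G=\overrightarrow\nabla P$, together with $\overrightarrow\nabla\times\overrightarrow\nabla P=0$, one gets $\overrightarrow\nabla\times(\lambda U\,\overrightarrow\nabla P)=\overrightarrow\nabla(\lambda U)\times\overrightarrow\nabla P$, hence $\partial_3(U)=-\overrightarrow\nabla(\lambda U)\times\overrightarrow\nabla P$, the claimed formula.

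The step I expect to demand the most care, and where an error would most easily slip in, is the sign-and-normalization bookkeeping. One must check that passing from $dx,dy,dz$ and their wedges to the component vectors of Section~3 is consistent with the shuffle signs $(-1)^{|\sigma|}$ in the definition of $i_Q$ and with the chosen orientation of the graded commutator $[\cdot,\cdot]$; this is what guarantees that the contraction of $\pi$ with a $2$-form is exactly $\lambda\,\overrightarrow G\cdot\overrightarrow\nabla P$, with no stray factor of $2$, and that $i_\pi$ applied to the volume form is exactly $\lambda U\,\overrightarrow\nabla P$. Since every map involved is $\mathcal A$-linear and $d$ is a graded derivation with $d^2=0$, it is enough to verify the three identities on the generating forms $dx,dy,dz$ and on $U\,dx\wedge dy\wedge dz$, which is a short finite computation; the formulas for $\partial_1,\partial_2,\partial_3$ on arbitrary differential forms then follow by linearity and the Leibniz rule.
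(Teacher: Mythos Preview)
Your approach is correct and is essentially the paper's own route---a direct unwinding of $\partial=i_\pi d-d\,i_\pi$ through the identifications of Section~3---only organized more cleanly: you first identify $i_\pi$ in vector form (via $\pi\leftrightarrow\lambda\overrightarrow\nabla P$) and then compose with the vector form of $d$, whereas the paper writes out only the $\partial_3$ case using the coordinate Brylinski formula and extracts components one by one.

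One genuine slip to fix: your $\partial_1$ line is internally inconsistent. You establish (correctly) that $i_\pi\beta=\lambda\,\overrightarrow G\cdot\overrightarrow\nabla P$ for a $2$-form $\beta\leftrightarrow\overrightarrow G$, and hence $\partial_1(\overrightarrow H)=i_\pi(d\overrightarrow H)=i_\pi(\overrightarrow\nabla\times\overrightarrow H)=+\lambda\,(\overrightarrow\nabla\times\overrightarrow H)\cdot\overrightarrow\nabla P$, not the minus sign you then write. Your derivations of $\partial_2$ and $\partial_3$ from the same ingredients are consistent and match the statement; the minus in the stated $\partial_1$ is a harmless sign typo in the proposition (every subsequent use in the paper depends only on $\ker\partial_1$ and $\mathrm{Im}\,\partial_1$, which are unaffected). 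Either correct the sign in the formula or note explicitly where the extra $-1$ is supposed to come from---as written, you have simply asserted it.
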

\begin{proof}
Let us give a proof of the last formula and let $Udx\wedge dy\wedge dz$ be an element of $\Omega^3(\mathcal A).$
We have:
\begin{multline*}
    \partial_3(Udx\wedge dy\wedge dz)= \{U, x\}dy\wedge dz+\{U, y\}dz\wedge dx+\{U, z\}dx\wedge dy+\\
    -Ud\{x, y\}\wedge dz+Ud\{x, z\}\wedge dy-d\{y, z\}\wedge dx.
\end{multline*}
But using our identification, we have $\partial_3(U)= (K_1, K_2, K_3)^t$. $K_1$ is for example the coefficient of $dy\wedge dz.$\\
$\begin{array}{rl}
K_1&=-\lambda\left(\frac{\partial U}{\partial y}\frac{\partial P}{\partial z}-\frac{\partial U}{\partial z}\frac{\partial P}{\partial y}\right)+U\left(\frac{\partial }{\partial z}(\lambda\frac{\partial P}{\partial y})-\frac{\partial }{\partial y}(\lambda\frac{\partial P}{\partial z})\right)\\
&=-\left(\frac{\partial }{\partial z}(\lambda U)\frac{\partial P}{\partial y}-\frac{\partial }{\partial y}(\lambda U)\frac{\partial P}{\partial z} \right)
\end{array}$\\
which is nothing but the first coordinate of $-\overrightarrow\nabla(\lambda U)\times\overrightarrow\nabla P.$
\end{proof}
Let us denote by $\pi$ the GJPS given by $\lambda$ and the Casimir $P.$ Then the Poisson homology takes the following form:\\ \\
$\begin{array}{lll}
PH_0(\mathcal A, \pi)&=&\frac{\mathcal A}{\{-\lambda(\overrightarrow\nabla\times\overrightarrow H)\cdot \overrightarrow\nabla P,\  \overrightarrow H\in\mathcal A^3\}}\\&& \\
PH_1(\mathcal A, \pi)&=&\frac{\{\lambda(\overrightarrow\nabla\times\overrightarrow H)\cdot \overrightarrow\nabla P=0\}}{-\overrightarrow\nabla(\lambda\overrightarrow G\cdot\overrightarrow\nabla P)+ \lambda Div(\overrightarrow G)\overrightarrow\nabla P, \ \overrightarrow G\in\mathcal A^3\}}\\&&\\
PH_2(\mathcal A, \pi)&=&\frac{\{-\overrightarrow\nabla(\lambda\overrightarrow G\cdot\overrightarrow\nabla P)+ \lambda Div(\overrightarrow G)\overrightarrow\nabla P=0\}}{\{-\overrightarrow\nabla(\lambda U)\times\overrightarrow\nabla P,\ U\in \mathcal A\}}\\&&\\
PH_3(\mathcal A, \pi)&=&\{-\overrightarrow\nabla(\lambda U)\times\overrightarrow\nabla P=0\}.\\
\end{array}$
\begin{prop}
According to the previous isomorphisms, the Poisson coboundary operators associated with the GJPS on algebra $\mathcal A$ given by two polynomials, $\lambda$ and the Casimir $P,$ can be written, using the vector notations, as follows:\\
\begin{equation}\label{hc2}
0\longrightarrow\mathcal A\stackrel{\delta^0}{\longrightarrow}\mathcal A^3\stackrel{\delta^1}{\longrightarrow}\mathcal A^3\stackrel{\delta^2}{\longrightarrow}\mathcal A\longrightarrow0
\end{equation}\\
where :\\ \\
$\delta^0(F)=-\lambda\overrightarrow\nabla F\times \overrightarrow\nabla P,\  F\in\mathcal A$\\ \\
$\delta^1(\overrightarrow{F})= -\lambda\overrightarrow\nabla(\overrightarrow F\cdot\overrightarrow\nabla P)+ \left(\lambda Div(\overrightarrow F)-\overrightarrow{F}\cdot\overrightarrow{\nabla}\lambda\right)\overrightarrow\nabla P, \ \overrightarrow F\in\mathcal A^3$\\ \\
$\delta^2(G)=-\lambda\overrightarrow\nabla P\cdot\overrightarrow\nabla\times\overrightarrow{G}-\overrightarrow{G}\cdot(\overrightarrow\nabla \lambda\times \overrightarrow\nabla P),\ \overrightarrow{G}\in \mathcal A^3.$\\
\end{prop}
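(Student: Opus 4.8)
\medskip
\noindent\emph{Outline of proof.} The plan is to prove these three formulas by the same kind of direct computation used above for the Poisson boundary: evaluate the Poisson coboundary $\delta(Q)=-[Q,\pi]_{S}$ on the coordinate functions and read off the components through the isomorphisms $\mathcal X^{\bullet}(\mathcal A)\cong\mathcal A\oplus\mathcal A^{3}\oplus\mathcal A^{3}\oplus\mathcal A$. Recall that for $Q\in\mathcal X^{q}(\mathcal A)$ and $f_{0},\dots,f_{q}\in\mathcal A$,
\begin{multline*}
\delta^{q}(Q)(f_{0},\dots,f_{q})=\sum_{i=0}^{q}(-1)^{i}\{f_{i},\,Q(f_{0},\dots,\widehat{f_{i}},\dots,f_{q})\}\\
+\sum_{0\le i<j\le q}(-1)^{i+j}Q(\{f_{i},f_{j}\},f_{0},\dots,\widehat{f_{i}},\dots,\widehat{f_{j}},\dots,f_{q}),
\end{multline*}
and since a $q$-derivation on $K[x,y,z]$ is determined by its values on the tuples built from $x,y,z$, it is enough to evaluate this on $x$, on $(x,y),(y,z),(z,x)$ and on $(x,y,z)$. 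The two inputs the computation needs are: $\pi$ corresponds to $\lambda\overrightarrow{\nabla}P$, because $\{x,y\}=\lambda\frac{dx\wedge dy\wedge dP}{dx\wedge dy\wedge dz}=\lambda\frac{\partial P}{\partial z}$ and cyclically; and for $H\in\mathcal A$ the bracket $\{x_{i},H\}$ is, up to sign, the $i$-th component of $\lambda\,\overrightarrow{\nabla}H\times\overrightarrow{\nabla}P$, obtained by expanding the Jacobian determinant along the coordinate column.

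Carrying this out, the first sum above assembles into a curl/gradient term, while the second sum produces the terms $Q(\{f_{i},f_{j}\},\cdot)$; expanding each $\{f_{i},f_{j}\}$ as $\lambda$ times a partial derivative of $P$ and applying the Leibniz rule of Proposition~\ref{pr1}(2) splits these into a part proportional to $\lambda$ --- which merges with the first sum, the second-order derivatives $\partial_{k}\partial_{\ell}P$ cancelling by Schwarz's theorem --- and a part carrying $\overrightarrow{\nabla}\lambda$, which is exactly the modular correction: the term $-\overrightarrow{F}\cdot\overrightarrow{\nabla}\lambda$ in $\delta^{1}$ and the term $-\overrightarrow{G}\cdot(\overrightarrow{\nabla}\lambda\times\overrightarrow{\nabla}P)$ in $\delta^{2}$. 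To put the answer into the stated closed form one uses Proposition~\ref{pr1}(2) once more (to recognise $\lambda\,\mathrm{Div}(\overrightarrow{F})-\overrightarrow{F}\cdot\overrightarrow{\nabla}\lambda$), Proposition~\ref{pr1}(4), and the triple vector product identity $\overrightarrow{A}\times(\overrightarrow{B}\times\overrightarrow{C})=(\overrightarrow{A}\cdot\overrightarrow{C})\overrightarrow{B}-(\overrightarrow{A}\cdot\overrightarrow{B})\overrightarrow{C}$. An equivalent but quicker route is to feed the duality relation $-\star^{\mu}(D_{2}(\pi)\wedge Q)=\star^{\mu}\delta(Q)+(-1)^{q}\partial(\star^{\mu}(Q))$ with the boundary operators $\partial_{\bullet}$ of~(\ref{hc1}): since $\star^{\mu}$ is the identity in these vector coordinates, the only new ingredient is $D_{2}(\pi)$, which equals $\overrightarrow{\nabla}\times(\lambda\overrightarrow{\nabla}P)$ and reduces by Proposition~\ref{pr1}(1) and $\overrightarrow{\nabla}\times\overrightarrow{\nabla}P=0$ to $\overrightarrow{\nabla}\lambda\times\overrightarrow{\nabla}P$; one then only has to expand $D_{2}(\pi)\wedge Q$ as a scalar multiple, a cross product or a dot product according to the degree of $Q$.

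I expect the one genuine difficulty to be the bookkeeping of signs: the shuffle signs $(-1)^{i+j}$ in the second sum above (equivalently, the degree-dependent sign $(-1)^{q}$ and the orientation conventions hidden in $\star^{\mu}$), together with the need to check that the unwanted second-order terms really do cancel. As with the previous proposition it suffices to write out one case in detail --- say $\delta^{2}$, where evaluating the formula on $(x,y,z)$ gives at once $-\lambda\overrightarrow{\nabla}P\cdot(\overrightarrow{\nabla}\times\overrightarrow{G})-\overrightarrow{G}\cdot(\overrightarrow{\nabla}\lambda\times\overrightarrow{\nabla}P)$ --- and to remark that $\delta^{0}$ and $\delta^{1}$ are entirely analogous, the degree-one operator being the longest but presenting no new difficulty.
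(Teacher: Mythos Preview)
The paper does not give a proof of this proposition at all: it simply states the three formulas and passes directly to the description of $PH^{\bullet}(\mathcal A,\pi)$, treating the verification as routine and analogous to the preceding proposition on $\partial_{\bullet}$ (where only the formula for $\partial_{3}$ is checked explicitly). Your outline is therefore already more detailed than what the paper offers, and both routes you sketch are sound.

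Of the two, the duality route is the cleaner one and is worth keeping as the main argument rather than an aside: once you know the $\partial_{i}$ from~(\ref{hc1}) and compute $D_{2}(\pi)=\overrightarrow{\nabla}\times(\lambda\overrightarrow{\nabla}P)=\overrightarrow{\nabla}\lambda\times\overrightarrow{\nabla}P$, the relation $-\star^{\mu}(D_{2}(\pi)\wedge Q)=\star^{\mu}\delta(Q)+(-1)^{q}\partial(\star^{\mu}(Q))$ gives each $\delta^{q}$ in one line, with the modular correction appearing transparently as the $D_{2}(\pi)\wedge Q$ term. The direct Schouten-bracket computation works too, but as you note the only real content there is sign bookkeeping, and it duplicates work already done for $\partial_{\bullet}$. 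Either way your plan is correct; just be aware that the overall sign conventions in the paper (for $\delta=-[\cdot,\pi]_{S}$, for $\star^{\mu}$, and for the identifications $\Omega^{k}\cong\mathcal A^{3}$) have to be tracked consistently to land on the stated signs.
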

Then the Poisson cohomology can be rewritten as follows:\\ \\
$\begin{array}{lll}
PH^0(\mathcal A, \pi)&=&\{F\in\mathcal A, \ \ \overrightarrow\nabla F\times \overrightarrow\nabla P=0\}\\&& \\
PH^1(\mathcal A, \pi)&=&\frac{\{ \overrightarrow F\in\mathcal A^3, \ \ -\lambda\overrightarrow\nabla(\overrightarrow F\cdot\overrightarrow\nabla P)+ \left(\lambda Div(\overrightarrow F)-\overrightarrow{F}\cdot\overrightarrow{\nabla}\lambda\right)\overrightarrow\nabla P=0\}}{\{ \lambda\overrightarrow\nabla F\times \overrightarrow\nabla P,\  F\in\mathcal A\}}\\&&\\
PH^2(\mathcal A, \pi)&=&\frac{\{\overrightarrow{G}\in \mathcal A^3, \ \ \lambda\overrightarrow\nabla P\cdot\overrightarrow\nabla\times\overrightarrow{G}+\overrightarrow{G}\times(\overrightarrow\nabla \lambda\times \overrightarrow\nabla P)=0 \}}{\{-\lambda\overrightarrow\nabla(\overrightarrow F\cdot\overrightarrow\nabla P)+ \left(\lambda Div(\overrightarrow F)-\overrightarrow{F}\cdot\overrightarrow{\nabla}\lambda\right)\overrightarrow\nabla P, \ \overrightarrow F\in\mathcal A^3\}}\\&&\\
PH^3(\mathcal A, \pi)&=&\frac{\mathcal A}{\{\lambda\overrightarrow\nabla P\cdot\overrightarrow\nabla\times\overrightarrow{G}+\overrightarrow{G}\times(\overrightarrow\nabla \lambda\times \overrightarrow\nabla P),\ \overrightarrow{G}\in \mathcal A^3\}}\\
\end{array}$

\section{Koszul complex- complete intersection with an isolated singularity in dimension 3}
Let us introduce some homological tools we will need in order to find the Poisson (co)homology of a GJPS given by $\lambda$ and the Casimir $P$ with some additional conditions on $\lambda$ and $P.$ We will suppose that $\lambda$ and $P$ form a regular sequence of weight homogeneous polynomials with 3 variables and define a complete intersection with an isolated singularity.\\
Set $\mathcal A=K[x, y, z]$.
\subsection{Weight homogeneous skew-symmetric multi-derivations}
\begin{definition}
Let $\mathcal V\in\mathcal X^1(\mathcal A)$ and $Q\in\mathcal X^q(\mathcal A)$. By definition, the Lie derivative of $Q$ with respect to $\mathcal V$ is $\mathcal L_{\mathcal V}Q:=[\mathcal V,Q]_S$
\end{definition}
\begin{definition}
A non-zero multi-derivation $P\in\mathcal X^\bullet(\mathcal A)$ is said to be weight homogeneous of degree $r\in\mathbb Z$, if there exists positive integers $\varpi_1, \varpi_2, \varpi_3\in\mathbb N^{\star}$, the weights of the variables $x, y, z$, without a common divisor, such that $$\mathcal L_{\vec{e}_\varpi}(P)=rP$$
where $\mathcal L_{\vec{e}_\varpi}$ is a Lie derivative with respect to the Euler derivation
$$\vec e_\varpi:=\varpi_1x\frac{\partial}{\partial x}+\varpi_2y\frac{\partial}{\partial y}+\varpi_3z\frac{\partial}{\partial z}$$
\end{definition}
The degree of a weight homogeneous multi-derivation $P\in\mathcal X^\bullet(\mathcal A)$ is also denoted by $\varpi(P)\in\mathbb Z.$
By convention, the zero $k$-derivation is weight homogeneous of degree $-\infty$.\\
The Euler derivation $\vec e_\varpi$ is identified to the element $\vec e_\varpi=(\varpi_1x, \varpi_2y,\varpi_3z)^t\in\mathcal A^3$. We denote by $|\varpi|$ the sum of the weights $\varpi_1+\varpi_2+\varpi_3$, so that $|\varpi|=Div(\vec e_\varpi)$.\\
The Euler's formula, for a weight homogeneous $F\in\mathcal A$, can be written as $\overrightarrow{\nabla}F\cdot\vec e_\varpi=\varpi(F)F.$
The isomorphism $\star^{\mu}$ allows us to transport the notion of weight homogeneity of skew-symmetric multi-derivations to K\"ahler $p$-differential forms.\\
Fixing weights $\varpi_1, \varpi_2, \varpi_3\in\mathbb N^\star$, it is clear that $\mathcal A={\bigoplus_{i\in\mathbb N}}\mathcal A_i$, where $\mathcal A_0=K$ and for $i\in\mathbb N^\star$, $\mathcal A_i$ is the $K$-vector space generated by all weight homogeneous polynomials of degree $i$. Denoting by $\Omega^k(\mathcal A)_i$ the $K$-vector space given by $\Omega^k(\mathcal A)_i=\{P\in\Omega^k(\mathcal A) : \varpi(P)=i\}\cup\{0\}$, we have the following isomorphisms : \\
\begin{align}
    \Omega^3(\mathcal A)_i\cong\mathcal X^0(\mathcal A)_i&\cong\mathcal A_i\nonumber\\
     \Omega^2(\mathcal A)_i\cong\mathcal X^1(\mathcal A)_i&\cong\mathcal A_{i+\varpi_1}\times\mathcal A_{i+\varpi_2}\times\mathcal A_{i+\varpi_3}\nonumber\\
     \Omega^1(\mathcal A)_i\cong\mathcal X^2(\mathcal A)_i&\cong\mathcal A_{i+\varpi_2+\varpi_3}\times\mathcal A_{i+\varpi_1+\varpi_3}\times\mathcal A_{i+\varpi_1+\varpi_2}\nonumber\\
     \Omega^0(\mathcal A)_i\cong\mathcal X^3(\mathcal A)_i&\cong\mathcal A_{i+\varpi_1+\varpi_2+\varpi_3}\nonumber
 \end{align}
\begin{rem}
As maps from $\mathcal X^k(\mathcal A)$ to $\mathcal X^{k-1}(\mathcal A),$ each arrow of the complex given by (\ref{qa}) is a weight homogeneous map of degree zero, while each arrow of complexes given by (\ref{hc1}) and (\ref{hc2}) is a weight homogeneous map of degree $\varpi(\lambda) +\varpi(P),$ if $\lambda$ and $P$ are weight homogeneous elements of $\mathcal A.$
\end{rem}
\subsection{The Koszul complex-complete intersection with an isolated singularity}
\begin{definition}
One say that a weight homogeneous element $U\in\mathcal A=K[x,y,z]$ has an isolated singularity in zero if
\begin{equation}\label{singularity}
    \mathcal A_{sing}(U):=K[x,y,z]/<\frac{\partial U}{\partial x},\frac{\partial U}{\partial y},\frac{\partial U}{\partial z}>
\end{equation}
has a finite dimension as a $K$-vector space.
\end{definition}
The dimension of $\mathcal A_{sing}(P)$ is called the Milnor number of the singular point. Usually, we will say a weight homogeneous polynomial has an isolated singularity to say that it has an isolated singularity in zero.\\
We shall now give a definition of dimension for rings. For this purpose, note that the length of the chain $P_r\supset P_{r-1}\supset\cdot\cdot\cdot\supset P_0$ involving $r+1$ distinct ideals of a given ring is taken to be $r.$
\begin{definition}
The Krull dimension of a ring $\mathcal R$ is the supremum of the lengths of chains of distinct prime ideals in $\mathcal R.$
\end{definition}
\begin{definition}
Let $\mathcal R$ be an associative and commutative graded $K$-algebra. A system of homogeneous elements $a_1,...,a_d$ in $\mathcal R$, where $d$ is the Krull dimension of $\mathcal R$, is called a homogeneous system of parameters of $\mathcal R$ (h.s.o.p.) if $\mathcal R/<a_1,...,a_d>$ is a finite dimensional $K$-vector space.
\end{definition}
For example, if we consider the $K$-algebra $\mathcal A=K[x,y,z]$, graded by the weight degree, we have a natural h.s.o.p. given by the system $x,y,z$.
 \begin{definition}
A sequence $a_1,...,a_n$ in a commutative associative algebra $\mathcal R$ is said to be an $\mathcal R$-regular sequence if $<a_1,...,a_n>\neq\mathcal R$ and $a_i$ is not a zero divisor of $\mathcal R/<a_1,...,a_{i-1}>$ for $i=1,2,...,n$.
\end{definition}
For any sequence $a_1,...,a_n$, we can define the following complex, called the associated Koszul complex, which is exact in the case of regular sequence (see Weibel ~\cite{wei}) :\\ \\
$\begin{array}{c}
0\longrightarrow\bigwedge^0(\mathcal R^n)\longrightarrow\cdot\cdot\cdot\longrightarrow\bigwedge^{n-2}(\mathcal R^n)\stackrel{\wedge \omega}{\longrightarrow}\bigwedge^{n-1}(\mathcal R^n)\stackrel{\wedge \omega}{\longrightarrow}\bigwedge^n(\mathcal R^n)\\
\end{array}$\\ \\
where $\omega=\displaystyle{\sum_{i=1}^n}a_ie_i$ and $(e_1, e_2,\cdots, e_n)$ is a basis of an $\mathcal R$-module free $\mathcal R^n.$\\
In our particular case, $\mathcal R=K[x_1,x_2,\cdots,x_n]$, using the identifications $\bigwedge^p(\mathcal R^n)\cong \Omega^p(\mathcal R)$, the Koszul complex associated to the sequence $\frac{\partial P}{\partial x_1},\frac{\partial P}{\partial x_2},\cdots,\frac{\partial P}{\partial x_n}$ ($P\in\mathcal R$) have the following form :

$\begin{array}{c}
0\longrightarrow\mathcal A\stackrel{\wedge dP}{\longrightarrow}\Omega^1(\mathcal A)\longrightarrow\cdot\cdot\cdot\longrightarrow\Omega^{n-2}(\mathcal A)\stackrel{\wedge dP}{\longrightarrow}\Omega^{n-1}(\mathcal A)\stackrel{\wedge dP}{\longrightarrow}\Omega^n(\mathcal A)\\
\end{array}$\\
Using the vector notation for $n=3$, we have the following complex : \\
$$0\longrightarrow\mathcal A\stackrel{\overrightarrow\nabla P}{\longrightarrow}\mathcal A^3\stackrel{\times\overrightarrow\nabla P}{\longrightarrow}\mathcal A^3\stackrel{\cdot\overrightarrow\nabla P}{\longrightarrow}\mathcal A.$$
\begin{theo}\textnormal{(Cohen-Macaulay)}. Let $\mathcal R$ be a noetherian graded $K$-algebra. If $\mathcal R$ has a h.s.o.p. which is a regular sequence, then any h.s.o.p. in $\mathcal R$ is a regular sequence.
\end{theo}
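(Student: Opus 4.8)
The plan is to deduce the statement from two classical facts of commutative algebra, both organised around the notion of depth. Throughout, let $\mathfrak{m}$ denote the irrelevant maximal homogeneous ideal of $\mathcal{R}$, i.e.\ the ideal of all elements of positive degree (so $\mathcal{R}/\mathfrak{m}=K$). Since $a_1,\dots,a_d$ is a regular sequence the ideal $\langle a_1,\dots,a_d\rangle$ is proper, and as $\mathcal{R}/\langle a_1,\dots,a_d\rangle$ is finite-dimensional over $K$ it is $\mathfrak{m}$-primary; in particular each $a_i$ lies in $\mathfrak{m}$. The first step is to observe that the hypothesis forces $\mathcal{R}$ to be Cohen-Macaulay: by Rees' theorem, $\operatorname{depth}_{\mathfrak{m}}\mathcal{R}$ equals the common length of the maximal $\mathcal{R}$-regular sequences contained in $\mathfrak{m}$, so the existence of the regular sequence $a_1,\dots,a_d\in\mathfrak{m}$ gives $\operatorname{depth}_{\mathfrak{m}}\mathcal{R}\geq d$. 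Since one always has $\operatorname{depth}_{\mathfrak{m}}\mathcal{R}\leq\dim\mathcal{R}=d$, we conclude $\operatorname{depth}_{\mathfrak{m}}\mathcal{R}=\dim\mathcal{R}$, which is exactly the Cohen-Macaulay property.

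The second step is the standard fact that in a Cohen-Macaulay graded noetherian ring every homogeneous system of parameters is a regular sequence, which I would prove by induction on $d=\dim\mathcal{R}$, the case $d=0$ being vacuous. Let $b_1,\dots,b_d$ be an arbitrary h.s.o.p.; I first claim $b_1$ is not a zero divisor. In a Cohen-Macaulay ring the unmixedness theorem holds: there are no embedded associated primes and every associated prime $\mathfrak{p}$ satisfies $\dim\mathcal{R}/\mathfrak{p}=d$. If $b_1$ were a zero divisor it would lie in some associated prime $\mathfrak{p}$, so $\dim\mathcal{R}/\langle b_1\rangle\geq\dim\mathcal{R}/\mathfrak{p}=d$; but the images of $b_2,\dots,b_d$ form a homogeneous system of parameters of $\mathcal{R}/\langle b_1\rangle$ (the further quotient by them is still finite-dimensional over $K$), which forces $\dim\mathcal{R}/\langle b_1\rangle\leq d-1$, a contradiction. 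Hence $b_1$ is a non-zero divisor, $\mathcal{R}/\langle b_1\rangle$ is again Cohen-Macaulay of dimension $d-1$ (dividing by a non-zero divisor in $\mathfrak{m}$ lowers both depth and dimension by exactly one), and $\bar{b}_2,\dots,\bar{b}_d$ is a h.s.o.p.\ of it; by the inductive hypothesis it is a regular sequence in $\mathcal{R}/\langle b_1\rangle$, whence $b_1,\dots,b_d$ is a regular sequence in $\mathcal{R}$.

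Combining the two steps yields the theorem: the first makes $\mathcal{R}$ Cohen-Macaulay, and the second then turns every h.s.o.p.\ in $\mathcal{R}$ into a regular sequence. I expect the main obstacle to be the unmixedness theorem used in the induction --- controlling the associated primes of a Cohen-Macaulay ring and the behaviour of dimension and depth after dividing by one element --- together with the minor bookkeeping of passing between the graded and the local settings; the cleanest way around the latter is to work in the $\ast$-local ring $\mathcal{R}_{\mathfrak{m}}$, the graded Nakayama lemma ensuring that the usual local statements transfer verbatim, or simply to cite Matsumura or Bruns-Herzog for these facts. I would also point out that this theorem is exactly the bridge needed in what follows: $\mathcal{A}=K[x,y,z]$ is Cohen-Macaulay, having the regular h.s.o.p.\ $x,y,z$, so whenever the partials $\frac{\partial P}{\partial x},\frac{\partial P}{\partial y},\frac{\partial P}{\partial z}$ form a h.s.o.p.\ --- equivalently, whenever $P$ has an isolated singularity --- they automatically form a regular sequence, and the associated Koszul complex is exact.
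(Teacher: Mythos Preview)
Your argument is correct and is essentially the standard textbook proof (Bruns--Herzog or Matsumura). Note, however, that the paper does not actually prove this theorem: it is quoted as a classical result and used only as a black box to conclude that the partials of a weight-homogeneous $P$ with isolated singularity form a regular sequence. So there is no ``paper's proof'' to compare against; you have supplied a proof where the paper gives none, and your final paragraph correctly identifies exactly how the theorem is applied downstream.
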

Thus, for each $P\in\mathcal A=K[x,y, z]$ which is a weight homogeneous polynomial with an isolated singularity, the sequence $\frac{\partial P}{\partial x},\frac{\partial P}{\partial y},\frac{\partial P}{\partial z}$ is regular and the associated Koszul complex is exact.
\section{Poisson homology of GJPS in dimension 3}
Let us consider the polynomial algebra $\mathcal A=K[x, y, z]$ where $K$ is a field of characteristic $0$ equipped with the GJPS $\pi$ given by two weight homogeneous polynomials $\lambda$ and the Casimir $P$ which define a regular sequence and $P$ has an isolated singularity.\\
First we compute the kernels $(ker\partial_i)_{i=1, 2, 3}.$
\begin{prop}\label{1pc}
$Ker\partial_1=\{\overrightarrow{G}=\overrightarrow{\nabla}F+ G\overrightarrow{\nabla}P, \ \ F, G\in\mathcal A\}$
\end{prop}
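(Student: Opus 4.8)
The statement is an equality of two subsets of $\mathcal A^3$, so the plan is to prove the two inclusions, after a preliminary simplification of the left-hand side. Since $\mathcal A=K[x,y,z]$ is an integral domain and $\lambda\neq 0$ (it is one term of a regular sequence), the equation $\partial_1(\overrightarrow H)=-\lambda\,(\overrightarrow\nabla\times\overrightarrow H)\cdot\overrightarrow\nabla P=0$ is equivalent to $(\overrightarrow\nabla\times\overrightarrow H)\cdot\overrightarrow\nabla P=0$, so the proposition amounts to saying that this last condition characterises the vector fields of the form $\overrightarrow\nabla F+G\overrightarrow\nabla P$.

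The inclusion ``$\supseteq$'' is an immediate computation from Proposition~\ref{pr1} and the exactness of the de Rham complex~$(\ref{qa})$: if $\overrightarrow H=\overrightarrow\nabla F+G\overrightarrow\nabla P$ then $\overrightarrow\nabla\times\overrightarrow\nabla F=0$ and property~1 of Proposition~\ref{pr1} gives $\overrightarrow\nabla\times(G\overrightarrow\nabla P)=\overrightarrow\nabla G\times\overrightarrow\nabla P$, hence $\overrightarrow\nabla\times\overrightarrow H=\overrightarrow\nabla G\times\overrightarrow\nabla P$; property~4 then yields $(\overrightarrow\nabla\times\overrightarrow H)\cdot\overrightarrow\nabla P=\overrightarrow\nabla G\cdot(\overrightarrow\nabla P\times\overrightarrow\nabla P)=0$.

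For the inclusion ``$\subseteq$'' I would first observe that both sides are weight-homogeneous $K$-subspaces of $\mathcal A^3$: $\ker\partial_1$ because $\partial_1$ is a weight-homogeneous map, and $\{\overrightarrow\nabla F+G\overrightarrow\nabla P\}$ because $\overrightarrow\nabla F+G\overrightarrow\nabla P$ is weight homogeneous precisely when $F$ and $G$ are, of the appropriate degrees. Hence one may assume $\overrightarrow H$ weight homogeneous and nonzero. Put $\overrightarrow B_0:=\overrightarrow H$; by hypothesis $\overrightarrow\nabla\times\overrightarrow B_0$ lies in the kernel of $\,\cdot\,\overrightarrow\nabla P$, so exactness of the Koszul complex of $\frac{\partial P}{\partial x},\frac{\partial P}{\partial y},\frac{\partial P}{\partial z}$ (which holds because $P$ has an isolated singularity) provides a weight-homogeneous $\overrightarrow B_1\in\mathcal A^3$ with $\overrightarrow\nabla\times\overrightarrow B_0=\overrightarrow B_1\times\overrightarrow\nabla P$ and $\varpi(\overrightarrow B_1)=\varpi(\overrightarrow B_0)-\varpi(P)$. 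Since $Div(\overrightarrow\nabla\times\overrightarrow B_0)=0$, property~3 of Proposition~\ref{pr1} together with $\overrightarrow\nabla\times\overrightarrow\nabla P=0$ gives $(\overrightarrow\nabla\times\overrightarrow B_1)\cdot\overrightarrow\nabla P=0$, so the construction iterates, producing weight-homogeneous $\overrightarrow B_k$ with $\overrightarrow\nabla\times\overrightarrow B_{k-1}=\overrightarrow B_k\times\overrightarrow\nabla P$ and $\varpi(\overrightarrow B_k)=\varpi(\overrightarrow H)-k\,\varpi(P)$. As $\varpi(P)>0$ and a nonzero weight-homogeneous element of $\mathcal A^3$ has degree bounded from below, there is $N$ with $\overrightarrow B_N=0$. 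A descending induction on $k$ then concludes: assuming $\overrightarrow B_{k+1}=\overrightarrow\nabla F_{k+1}+G_{k+1}\overrightarrow\nabla P$, property~1 of Proposition~\ref{pr1} and $\overrightarrow\nabla P\times\overrightarrow\nabla P=0$ give
$$\overrightarrow\nabla\times\overrightarrow B_k=\overrightarrow B_{k+1}\times\overrightarrow\nabla P=\overrightarrow\nabla F_{k+1}\times\overrightarrow\nabla P=\overrightarrow\nabla\times\bigl(F_{k+1}\overrightarrow\nabla P\bigr),$$
so $\overrightarrow B_k-F_{k+1}\overrightarrow\nabla P$ has vanishing curl and, by exactness of~$(\ref{qa})$, equals $\overrightarrow\nabla F_k$ for some $F_k\in\mathcal A$; thus $\overrightarrow B_k=\overrightarrow\nabla F_k+F_{k+1}\overrightarrow\nabla P$. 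For $k=0$ this is exactly $\overrightarrow H=\overrightarrow\nabla F_0+F_1\overrightarrow\nabla P$.

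The main obstacle is the ``$\subseteq$'' direction: Koszul exactness alone peels off only one factor of $\overrightarrow\nabla P$ from $\overrightarrow\nabla\times\overrightarrow H$, and the auxiliary field $\overrightarrow B_1$ need not itself be a gradient, so without further input the argument does not close. The device that makes the recursion terminate — and hence the descending induction possible — is the weight-homogeneity hypothesis, which forces $\varpi(\overrightarrow B_k)$ to drop by $\varpi(P)>0$ at each step, so that the $\overrightarrow B_k$ eventually vanish. A minor routine point to verify along the way is that, since $\overrightarrow\nabla\times$ and $\times\overrightarrow\nabla P$ are weight-homogeneous maps, any homogeneous element of the relevant image admits a homogeneous preimage of the complementary degree.
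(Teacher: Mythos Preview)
Your proof is correct and follows essentially the same approach as the paper: reduce to weight-homogeneous $\overrightarrow H$, use Koszul exactness for $P$ to build a degree-decreasing sequence $\overrightarrow B_k$ of cycles with $\overrightarrow\nabla\times\overrightarrow B_{k-1}=\overrightarrow B_k\times\overrightarrow\nabla P$, stop when the sequence vanishes, and then unwind via the Poincar\'e lemma. Your write-up is in fact more careful than the paper's---you prove the easy inclusion, justify the reduction to the homogeneous case, and spell out why $(\overrightarrow\nabla\times\overrightarrow B_1)\cdot\overrightarrow\nabla P=0$ via $Div(\overrightarrow\nabla\times\overrightarrow B_0)=0$ and Proposition~\ref{pr1}(3)---whereas the paper leaves these implicit (and even has a small typo at that step).
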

\begin{proof}
Let $\overrightarrow{G}$ be a weight homogeneous element of $\Omega^1(\mathcal A)\cong \mathcal A^3$ such that $\partial_1(\overrightarrow{G})=0.$ This is equivalent to say that $\overrightarrow{\nabla}P\cdot(\overrightarrow{\nabla}\times\overrightarrow{G})=0.$ According to the exactness of the Koszul complex associated with $P$, there exists $\overrightarrow{G}_1\in\mathcal A^3$ such that $\overrightarrow{\nabla}\times\overrightarrow{G}=\overrightarrow{G}_1\times\overrightarrow{\nabla}P.$\\
We get $0=Div(\overrightarrow{\nabla}\times\overrightarrow{G})=\overrightarrow{\nabla}P\cdot(\overrightarrow{\nabla}\times\overrightarrow{G}),$ and therefore $\overrightarrow{G}_1$ is also a Poisson cycle.

Proceeding in this way yields the existence of a sequence of Poisson cycle $(\overrightarrow{G}_n)_{n\in\mathbb N},$ with $\overrightarrow{G}=\overrightarrow{G}_0,$ such that
$\overrightarrow{\nabla}\times\overrightarrow{G}_k=\overrightarrow{G}_{k+1}\times\overrightarrow{\nabla}P$ and $deg \overrightarrow{G}_{k+1}< deg\overrightarrow{G}_{k}.$ Then there exists $m\in \mathbb N$ such that $\overrightarrow{\nabla}\times\overrightarrow{G}_m=0.$ Using the Poincaré lemma, there exists $F_m\in\mathcal A$ such that $\overrightarrow{G}_m=\overrightarrow{\nabla}F_m.$

Since $\overrightarrow{\nabla}\times\overrightarrow{G}_{m-1}=\overrightarrow{G}_{k+1}\times\overrightarrow{\nabla}P=\overrightarrow{\nabla}\times (F_m\overrightarrow{\nabla}P),$ by the Poincaré lemma, we have $\overrightarrow{G}_{m-1}=F_m\overrightarrow{\nabla}P+ \overrightarrow{\nabla}F_{m-1}.$
Finally, we get $\overrightarrow{G}=G\overrightarrow{\nabla}P+ \overrightarrow{\nabla}F,$ $F, G\in\mathcal A.$
\end{proof}
\begin{lem}\label{cas}
Let $F\in\mathcal A.$ $\overrightarrow{\nabla}F\times \overrightarrow{\nabla}P=0$ iff $F\in K[P].$
\end{lem}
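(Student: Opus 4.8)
The plan is to prove the nontrivial implication: if $\overrightarrow{\nabla}F\times\overrightarrow{\nabla}P=0$ then $F\in K[P]$. (The converse is immediate, since $\overrightarrow{\nabla}(g\circ P)=(g'\circ P)\overrightarrow{\nabla}P$ is parallel to $\overrightarrow{\nabla}P$.) I may assume $F$ is weight homogeneous, by decomposing $F$ into its weight homogeneous components: the condition $\overrightarrow{\nabla}F\times\overrightarrow{\nabla}P=0$ is weight homogeneous of a fixed degree only if $F$ is, so after expanding we see each homogeneous component of $F$ must separately have gradient parallel to $\overrightarrow{\nabla}P$; and if each component lies in $K[P]$ then so does $F$ (here one uses that $P$ itself is weight homogeneous, so the subalgebra $K[P]$ is graded). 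Thus assume $F$ is weight homogeneous of degree $r$.

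The condition $\overrightarrow{\nabla}F\times\overrightarrow{\nabla}P=0$ says $\overrightarrow{\nabla}F$ and $\overrightarrow{\nabla}P$ are proportional as elements of the fraction field. First I would dispose of the degenerate case $\overrightarrow{\nabla}P=0$: since $P$ is weight homogeneous with an isolated singularity, $\mathcal{A}_{sing}(P)$ is finite dimensional, which forces $P$ to be nonconstant and $\overrightarrow{\nabla}P\neq 0$; in fact the three partials $\partial P/\partial x,\partial P/\partial y,\partial P/\partial z$ form a regular sequence (by the Cohen--Macaulay theorem quoted above), in particular none of them is a zero divisor, hence each is nonzero. So $\overrightarrow{\nabla}P\neq\overrightarrow{0}$. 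Now from $\overrightarrow{\nabla}F\times\overrightarrow{\nabla}P=\overrightarrow{0}$, working in the polynomial ring, each $2\times 2$ minor $\frac{\partial F}{\partial x_i}\frac{\partial P}{\partial x_j}-\frac{\partial F}{\partial x_j}\frac{\partial P}{\partial x_i}$ vanishes. Pick an index, say $\partial P/\partial x\neq 0$; from the vanishing minors we get $\frac{\partial P}{\partial x}\cdot\frac{\partial F}{\partial y}=\frac{\partial F}{\partial x}\cdot\frac{\partial P}{\partial y}$ and similarly for $z$. Since $\mathcal{A}=K[x,y,z]$ is a UFD, after cancelling $\gcd(\partial P/\partial x,\partial F/\partial x)$ we may write $\partial F/\partial x = a\,h$, $\partial P/\partial x = b\,h$ with $a,b$ coprime, and deduce $b\mid \partial F/\partial y$, $b\mid\partial F/\partial z$, and likewise $b\mid\partial F/\partial x$; so there is a polynomial $c$ (namely $c=a/b$ up to the analysis, more precisely $c$ defined by $\overrightarrow{\nabla}F=c\,\overrightarrow{\nabla}P$) with $\overrightarrow{\nabla}F=c\,\overrightarrow{\nabla}P$ \emph{in $\mathcal{A}^3$}, i.e. $c\in\mathcal{A}$ is an honest polynomial. (The isolated-singularity hypothesis enters to guarantee $\gcd(\partial P/\partial x,\partial P/\partial y,\partial P/\partial z)=1$, so the common factor analysis is clean.)

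It remains to show that $\overrightarrow{\nabla}F=c\,\overrightarrow{\nabla}P$ with $c\in\mathcal{A}$ forces $c\in K[P]$ and hence $F\in K[P]$. Apply $\overrightarrow{\nabla}\times$ to both sides: $\overrightarrow{0}=\overrightarrow{\nabla}\times(c\,\overrightarrow{\nabla}P)=\overrightarrow{\nabla}c\times\overrightarrow{\nabla}P+c\,(\overrightarrow{\nabla}\times\overrightarrow{\nabla}P)=\overrightarrow{\nabla}c\times\overrightarrow{\nabla}P$, using Proposition \ref{pr1} and $\overrightarrow{\nabla}\times\overrightarrow{\nabla}P=\overrightarrow{0}$. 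So $c$ satisfies the same equation as $F$, but $\deg c=\deg F-\varpi(P)<\deg F$. This sets up a descending induction on the weight degree: the base case is $\deg F<\varpi(P)$ (or $F$ constant), where $\overrightarrow{\nabla}F$ parallel to $\overrightarrow{\nabla}P$ with $F$ of too small degree forces $\overrightarrow{\nabla}F=\overrightarrow{0}$, hence $F\in K\subset K[P]$; for the inductive step, $c\in K[P]$ by induction, write $c=\sum_k \alpha_k P^k$, and then $\overrightarrow{\nabla}F=\big(\sum_k\alpha_k P^k\big)\overrightarrow{\nabla}P=\overrightarrow{\nabla}\big(\sum_k \frac{\alpha_k}{k+1}P^{k+1}\big)$, so $F-\sum_k\frac{\alpha_k}{k+1}P^{k+1}$ has vanishing gradient, hence lies in $K$, hence $F\in K[P]$.

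The main obstacle is the middle step: upgrading the proportionality $\overrightarrow{\nabla}F\parallel\overrightarrow{\nabla}P$ (a priori only over the fraction field) to a genuine polynomial identity $\overrightarrow{\nabla}F=c\,\overrightarrow{\nabla}P$ with $c\in\mathcal{A}$. This is exactly where the isolated-singularity hypothesis on $P$ is essential — it is what makes $\partial P/\partial x,\partial P/\partial y,\partial P/\partial z$ have no common factor (indeed a regular sequence), so that the "denominator'' $b$ appearing after clearing $\gcd$'s must be a unit. Once $c$ is known to be polynomial, the rest is the routine gradient-descent argument above; alternatively one can invoke exactness of the Koszul complex of $\overrightarrow{\nabla}P$ directly to produce $c$ and then iterate, mirroring the proof of Proposition \ref{1pc}.
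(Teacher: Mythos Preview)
Your argument follows the same inductive scheme as the paper: reduce to weight homogeneous $F$, produce a polynomial $c$ with $\overrightarrow{\nabla}F=c\,\overrightarrow{\nabla}P$, observe that $c$ satisfies the same hypothesis with strictly smaller weight degree, and close the induction. The paper carries out the middle step in one line by invoking exactness of the Koszul complex of $(\partial P/\partial x,\partial P/\partial y,\partial P/\partial z)$ at the first copy of $\mathcal A^3$ --- precisely the alternative you mention in your last sentence --- and this is the cleaner route given the machinery already set up.

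Your UFD argument is also valid in principle but contains a slip in the divisibility direction. With $h=\gcd(F_x,P_x)$, $F_x=ah$, $P_x=bh$, $\gcd(a,b)=1$, the relation $F_xP_y=F_yP_x$ gives $aP_y=bF_y$, from which $\gcd(a,b)=1$ yields $b\mid P_y$ (and similarly $b\mid P_z$, while $b\mid P_x$ is trivial), \emph{not} $b\mid F_y$ as you wrote. It is the divisibility of the partials of $P$ by $b$, together with $\gcd(\partial P/\partial x,\partial P/\partial y,\partial P/\partial z)=1$ (your stated consequence of the isolated singularity), that forces $b$ to be a unit and hence $c=a/b\in\mathcal A$. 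With that correction your proof is complete and equivalent to the paper's.
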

\begin{proof}
Let $F$ be a weight homogeneous polynomial such that $\overrightarrow{\nabla}F\times \overrightarrow{\nabla}P=0.$ We will proceed by induction on the degree of $F.$ If $\varpi(F)\leqq0,$ the result is obvious. Let us suppose now that the result is true for all polynomial of degree less than $\varpi(F).$ Using the exactness of the Koszul complex associated with $P,$ there exists a polynomial $F_1$ such that $\overrightarrow{\nabla}F=F_1\overrightarrow{\nabla}P.$ Therefore, we get $\overrightarrow{\nabla}F_1\times\overrightarrow{\nabla}P=0$ and $\varpi(F_1)<\varpi(F).$ Then $F_1\in K[P]$ and hence $F\in K[P].$
\end{proof}
\begin{lem}\label{reg} Let $F$ be a weight homogenous element of $\mathcal A.$
If $\lambda F=cP^r,$ $c\in K$ and $r\in\mathbb N,$ then $c=0.$
\end{lem}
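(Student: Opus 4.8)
The plan is to argue by contradiction, exploiting that regularity of the sequence $\{\lambda, P\}$ says precisely that the class of $P$ is a non-zero divisor in $\mathcal A/\langle\lambda\rangle$, and that $\lambda$ cannot be a constant. Before starting I would record the two facts about the standing hypothesis that we need. First, since $\mathcal A=K[x,y,z]$ is an integral domain, the condition that $\lambda$ be a non-zero divisor of $\mathcal A$ merely says $\lambda\neq 0$; and since $\langle\lambda,P\rangle\neq\mathcal A$ by the definition of a regular sequence, $\lambda$ is not an invertible constant, so $\varpi(\lambda)\geq 1$. Second, and this is where regularity is really used, the image $\bar P$ of $P$ in the quotient $\mathcal A/\langle\lambda\rangle$ is a non-zero divisor.

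Now suppose, towards a contradiction, that $\lambda F=cP^r$ with $c\in K$, $c\neq 0$, and $r\in\mathbb N$. I would show by descending induction on $k$ that $\lambda\mid cP^{k}$ for every $k$ with $0\leq k\leq r$. For $k=r$ this is the hypothesis. If $\lambda\mid cP^{k}$ with $k\geq 1$, then $\overline{cP^{k}}=0$ in $\mathcal A/\langle\lambda\rangle$, that is $\bar P\cdot\overline{cP^{k-1}}=0$; since $\bar P$ is a non-zero divisor, $\overline{cP^{k-1}}=0$, i.e. $\lambda\mid cP^{k-1}$. Taking $k=0$ we obtain $\lambda\mid c$ in $\mathcal A$; as $c\neq 0$ this forces $\lambda$ to divide a nonzero constant, hence to be an invertible constant, contradicting $\varpi(\lambda)\geq 1$. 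Therefore $c=0$.

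I do not anticipate any real obstacle: the entire content is the translation of ``regular sequence'' into ``$P$ is a non-zero divisor modulo $\lambda$'' together with the elementary remark $\varpi(\lambda)\geq 1$, the rest being a two-line descent that also covers the degenerate case $r=0$ as its base step. If one prefers to keep $P$ in the denominator, the symmetric argument run in $\mathcal A/\langle P\rangle$ --- where $\lambda$ is a non-zero divisor, regular sequences of weight homogeneous polynomials being permutable --- gives first $P\mid F$, then $\lambda F_1=cP^{r-1}$ after cancelling the non-zero divisor $P$ in the domain $\mathcal A$, and one iterates to the same conclusion. The weight homogeneity of $F$ is not used here and is retained only for uniformity with the neighbouring statements.
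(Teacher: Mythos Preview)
Your proof is correct and is essentially the same induction-on-$r$ argument as the paper's, just run in the dual quotient: you work in $\mathcal A/\langle\lambda\rangle$ and use that $\bar P$ is a non-zero divisor there, whereas the paper works in $\mathcal A/\langle P\rangle$ and uses that $\bar\lambda$ is a non-zero divisor. Your choice has the small advantage that it uses the regularity of $(\lambda,P)$ as stated, without first permuting the sequence; the alternative you sketch at the end is exactly the paper's route.
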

\begin{proof}
We will proceed by induction on $r.$ If $r=0,$ then $F\lambda$ is a constant. This is possible only if $F=0$ since $\lambda\neq0.$ Then $c=0.$ Now, let us suppose that the hypothesis is true for some $r\in\mathbb N.$ Let $F$ be a weight homogeneous polynomial such that $\lambda F=cP^{r+1}.$ Since the sequence $(\lambda, P)$ is regular, $\lambda$ is not a divisor of zero in $\mathcal A/(P).$ Then there exists $F_1\in\mathcal A$ such that $F=F_1P.$ Therefore $\lambda F_1=cP^{r}.$ Using the induction supposition, we conclude that $c=0.$
\end{proof}
\begin{prop}\label{2pc}
$Ker\partial_2=\{\overrightarrow{G}=\overrightarrow{\nabla}H\times\overrightarrow{\nabla}P, \ \ H\in\mathcal A\}$
\end{prop}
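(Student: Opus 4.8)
The plan is to prove the two inclusions separately; $\supseteq$ is an immediate computation, while $\subseteq$ reduces, after establishing two preliminary vanishing facts, to a descent argument of the same shape as the one used in Proposition~\ref{1pc}.

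For the inclusion $\supseteq$, I would check directly that $\partial_2(\overrightarrow{\nabla}H\times\overrightarrow{\nabla}P)=0$ for every $H\in\mathcal A$. In the formula for $\partial_2$, the first summand contains the factor $(\overrightarrow{\nabla}H\times\overrightarrow{\nabla}P)\cdot\overrightarrow{\nabla}P$, which is $0$ because it is a scalar triple product with a repeated vector (item~4 of Proposition~\ref{pr1}), so that summand vanishes; the second summand contains the factor $\mathrm{Div}(\overrightarrow{\nabla}H\times\overrightarrow{\nabla}P)$, and by item~3 of Proposition~\ref{pr1} this equals $\overrightarrow{\nabla}P\cdot(\overrightarrow{\nabla}\times\overrightarrow{\nabla}H)-\overrightarrow{\nabla}H\cdot(\overrightarrow{\nabla}\times\overrightarrow{\nabla}P)=0$, since $\overrightarrow{\nabla}\times\overrightarrow{\nabla}H=0$ and $\overrightarrow{\nabla}\times\overrightarrow{\nabla}P=0$ (the composite of two consecutive arrows of the de Rham complex~(\ref{qa}) is zero).

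For the nontrivial inclusion $\subseteq$, since $\partial_2$ is a weight homogeneous map it suffices to treat a weight homogeneous $\overrightarrow{G}\in\mathcal A^3$ with $\partial_2(\overrightarrow{G})=0$, i.e.
$$\overrightarrow{\nabla}(\lambda\,\overrightarrow{G}\cdot\overrightarrow{\nabla}P)=\lambda\,\mathrm{Div}(\overrightarrow{G})\,\overrightarrow{\nabla}P .$$
The first step, which I expect to be the crucial one, is to deduce from this that $\overrightarrow{G}\cdot\overrightarrow{\nabla}P=0$ and $\mathrm{Div}(\overrightarrow{G})=0$. Taking the cross product of the identity above with $\overrightarrow{\nabla}P$ kills the right-hand side, so $\overrightarrow{\nabla}(\lambda\,\overrightarrow{G}\cdot\overrightarrow{\nabla}P)\times\overrightarrow{\nabla}P=0$, and Lemma~\ref{cas} gives $\lambda\,\overrightarrow{G}\cdot\overrightarrow{\nabla}P\in K[P]$. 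As $\overrightarrow{G}\cdot\overrightarrow{\nabla}P$ is weight homogeneous, this forces $\lambda\,(\overrightarrow{G}\cdot\overrightarrow{\nabla}P)=cP^{r}$ for some $c\in K$ and $r\in\mathbb N$, and then Lemma~\ref{reg} (this is where the regular sequence / complete intersection hypothesis is really used) yields $c=0$; hence $\lambda\,\overrightarrow{G}\cdot\overrightarrow{\nabla}P=0$ and, $\mathcal A$ being a domain with $\lambda\neq0$, $\overrightarrow{G}\cdot\overrightarrow{\nabla}P=0$. Substituting back into the identity, $\lambda\,\mathrm{Div}(\overrightarrow{G})\,\overrightarrow{\nabla}P=\overrightarrow{\nabla}(0)=0$, so $\mathrm{Div}(\overrightarrow{G})=0$ as well. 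The obstacle here is precisely that a priori $\lambda\,\overrightarrow{G}\cdot\overrightarrow{\nabla}P$ could be a nonzero scalar multiple of a power of $P$; Lemma~\ref{reg} is exactly what excludes this.

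It remains to conclude by the descent argument. From $\overrightarrow{G}\cdot\overrightarrow{\nabla}P=0$ and the exactness of the Koszul complex associated with $P$ (which holds since $P$ has an isolated singularity, so that $\partial P/\partial x,\ \partial P/\partial y,\ \partial P/\partial z$ is a regular sequence), there exists $\overrightarrow{G}_1\in\mathcal A^3$, which can be chosen weight homogeneous, with $\overrightarrow{G}=\overrightarrow{G}_1\times\overrightarrow{\nabla}P$. By item~3 of Proposition~\ref{pr1} together with $\overrightarrow{\nabla}\times\overrightarrow{\nabla}P=0$, we then get $0=\mathrm{Div}(\overrightarrow{G})=\mathrm{Div}(\overrightarrow{G}_1\times\overrightarrow{\nabla}P)=(\overrightarrow{\nabla}\times\overrightarrow{G}_1)\cdot\overrightarrow{\nabla}P$, that is $\partial_1(\overrightarrow{G}_1)=0$. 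Proposition~\ref{1pc} now gives $\overrightarrow{G}_1=\overrightarrow{\nabla}F+G\,\overrightarrow{\nabla}P$ with $F,G\in\mathcal A$, whence
$$\overrightarrow{G}=\overrightarrow{G}_1\times\overrightarrow{\nabla}P=\overrightarrow{\nabla}F\times\overrightarrow{\nabla}P+G\,(\overrightarrow{\nabla}P\times\overrightarrow{\nabla}P)=\overrightarrow{\nabla}F\times\overrightarrow{\nabla}P,$$
which is the desired form. Equivalently, rather than appealing to Proposition~\ref{1pc}, one can run the inductive descent on degrees directly, producing a sequence $\overrightarrow{G}_1,\overrightarrow{G}_2,\dots$ with $\overrightarrow{\nabla}\times\overrightarrow{G}_k=\overrightarrow{G}_{k+1}\times\overrightarrow{\nabla}P$ and strictly decreasing degrees, and applying the Poincar\'e lemma at the bottom of the chain; invoking Proposition~\ref{1pc} simply packages this induction.
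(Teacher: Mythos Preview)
Your proof is correct and follows essentially the same approach as the paper: reduce to $\overrightarrow{G}\cdot\overrightarrow{\nabla}P=0$ and $\mathrm{Div}(\overrightarrow{G})=0$ via Lemma~\ref{cas} and Lemma~\ref{reg}, then finish with Proposition~\ref{1pc}. The only cosmetic difference is in the concluding step: the paper first applies the Poincar\'e lemma to $\mathrm{Div}(\overrightarrow{G})=0$ to write $\overrightarrow{G}=\overrightarrow{\nabla}\times\overrightarrow{H}$ and then invokes Proposition~\ref{1pc} on $\overrightarrow{H}$, whereas you first apply Koszul exactness to $\overrightarrow{G}\cdot\overrightarrow{\nabla}P=0$ to write $\overrightarrow{G}=\overrightarrow{G}_1\times\overrightarrow{\nabla}P$ and then invoke Proposition~\ref{1pc} on $\overrightarrow{G}_1$; both orderings lead to the same conclusion.
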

\begin{proof}
Let $\overrightarrow{G}$ be a weight homogeneous element of $\Omega^2(\mathcal A)\cong \mathcal A^3$ such that $\partial_2(\overrightarrow{G})=0.$ Then $-\overrightarrow{\nabla}(\lambda\overrightarrow{G}\cdot\overrightarrow{\nabla}P)+ \lambda Div(\overrightarrow{F})\overrightarrow{\nabla}P=0$ and therefore $\overrightarrow{\nabla}(\lambda\overrightarrow{G}\cdot\overrightarrow{\nabla}P)\times \overrightarrow{\nabla}P=0.$ Using the lemma \ref{cas}, $\lambda\overrightarrow{G}\cdot\overrightarrow{\nabla}P$ is a Casimir. Let us suppose that $\lambda\overrightarrow{G}\cdot\overrightarrow{\nabla}P=cP^r,$ $c\in K$ and $r$ is an integer. From lemma \ref{reg}, $c=0$ and then $\overrightarrow{G}\cdot\overrightarrow{\nabla}P=0.$ Therefore $Div(\overrightarrow{G})=0.$ From the Poincaré lemma, $\overrightarrow{G}=\overrightarrow{\nabla}\times\overrightarrow{H}$ and hence $(\overrightarrow{\nabla}\times\overrightarrow{H})\cdot\overrightarrow{\nabla}P=0.$ Using proposition \ref{1pc}, we conclude that there exists $F, G\in\mathcal A$ such that $\overrightarrow{H}=H\overrightarrow{\nabla}P+ \overrightarrow{\nabla}F.$ Then $\overrightarrow{G}=\overrightarrow{\nabla}H\times\overrightarrow{\nabla}P.$
\end{proof}
\begin{prop}
$Ker\partial_3=\{0\}$
\end{prop}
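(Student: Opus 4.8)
The plan is to read off $\partial_3$ from the vector form of the homology complex and then quote the two preceding lemmas. Since the complex $(\ref{hc1})$ is a complex of weight homogeneous maps (of degree $\varpi(\lambda)+\varpi(P)$), the kernel of $\partial_3$ is a graded subspace of $\mathcal A$, so it suffices to show that a weight homogeneous $U\in\mathcal A$ with $\partial_3(U)=0$ must vanish. By the formula $\partial_3(U)=-\overrightarrow{\nabla}(\lambda U)\times\overrightarrow{\nabla}P$, the condition $\partial_3(U)=0$ is exactly $\overrightarrow{\nabla}(\lambda U)\times\overrightarrow{\nabla}P=0$.

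Next I would apply Lemma \ref{cas} to the weight homogeneous polynomial $F:=\lambda U$: it gives $\lambda U\in K[P]$. Because $\lambda U$ is weight homogeneous (of degree $\varpi(\lambda)+\varpi(U)$) and $K[P]=\bigoplus_{r\geq 0}K\,P^r$ is graded with $\varpi(P^r)=r\,\varpi(P)$, the only weight homogeneous elements of $K[P]$ are the monomials $cP^r$ (together with $0$). Hence $\lambda U=cP^r$ for some $c\in K$ and $r\in\mathbb N$.

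Now Lemma \ref{reg} applies verbatim to the relation $\lambda U=cP^r$ and forces $c=0$, so $\lambda U=0$. Since $\mathcal A=K[x,y,z]$ is an integral domain and $\lambda\neq 0$, this yields $U=0$, which proves $Ker\,\partial_3=\{0\}$. There is essentially no hard step here: the statement is a formal corollary of Lemmas \ref{cas} and \ref{reg}, the only points requiring a word of care being the reduction to the weight homogeneous case (legitimate because the complex is graded) and the passage from ``$\lambda U\in K[P]$'' to ``$\lambda U=cP^r$'' via weight homogeneity.
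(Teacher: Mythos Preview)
Your proof is correct and follows the same approach as the paper: reduce to a weight homogeneous $U$, use Lemma~\ref{cas} to conclude $\lambda U\in K[P]$ (hence $\lambda U=cP^r$ by homogeneity), and then invoke Lemma~\ref{reg} to force $c=0$, whence $U=0$. You are simply a bit more explicit than the paper about the reduction to the graded case and about why $\lambda U=0$ implies $U=0$.
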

\begin{proof}
Let $F$ be a weight homogeneous element of $\Omega^3(\mathcal A)\cong \mathcal A$ such that $\partial_3(F)=0.$ Then $\overrightarrow{\nabla}(\lambda F)\times\overrightarrow{\nabla}P=0.$ Then $\lambda F$ is a Casimir. Let us suppose that $\lambda F=cP^r,$ $c\in K$ and $r\in \mathbb N.$ According to lemma (\ref{reg}), $c=0$ and therefore $F=0.$
\end{proof}
\begin{rem}
One can surprisingly remark that the Poisson cycles do not depend on the polynomial $\lambda.$
\end{rem}
\begin{cor}
$PH_3(\mathcal A, \pi)=0.$
\end{cor}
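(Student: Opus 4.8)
The plan is to notice that $PH_3(\mathcal A,\pi)$ is forced to coincide with $\ker\partial_3$ by the mere shape of the Poisson chain complex. Indeed, in the complex $(\ref{hc1})$ the space $\Omega^3(\mathcal A)\cong\mathcal A$ sits at the extreme left, $0\longrightarrow\mathcal A\stackrel{\partial_3}{\longrightarrow}\mathcal A^3\longrightarrow\cdots$, so there are no Poisson $3$-boundaries; hence $PH_3(\mathcal A,\pi)=\ker\partial_3/\{0\}=\ker\partial_3$. This matches the description $PH_3(\mathcal A,\pi)=\{-\overrightarrow\nabla(\lambda U)\times\overrightarrow\nabla P=0\}$ recorded after Proposition on the $\partial_i$.

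It then suffices to invoke the proposition established immediately above, namely $\ker\partial_3=\{0\}$, and conclude $PH_3(\mathcal A,\pi)=0$.

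If one prefers a self-contained argument rather than a bare citation, I would reprove $\ker\partial_3=\{0\}$ directly: let $U\in\mathcal A$ be weight homogeneous with $\partial_3(U)=-\overrightarrow\nabla(\lambda U)\times\overrightarrow\nabla P=0$. By Lemma \ref{cas} applied to $F=\lambda U$, this means $\lambda U\in K[P]$; since $\lambda U$ is weight homogeneous, $\lambda U=cP^r$ for some $c\in K$ and $r\in\mathbb N$. Lemma \ref{reg} now yields $c=0$, so $\lambda U=0$, and because $\lambda\neq 0$ while $\mathcal A=K[x,y,z]$ is an integral domain, we get $U=0$.

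There is essentially no obstacle here: all the substance was already carried out in proving $\ker\partial_3=\{0\}$ via the regularity of the sequence $(\lambda,P)$ (Lemma \ref{reg}) and the characterization of Casimirs (Lemma \ref{cas}). The corollary is just the reading of that fact at the top degree of the homology complex, and the only thing to be careful about is that no boundary term intervenes in degree $3$, which is immediate from $(\ref{hc1})$.
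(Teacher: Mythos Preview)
Your proposal is correct and follows exactly the paper's approach: the corollary is stated without proof, as an immediate consequence of the preceding proposition $\ker\partial_3=\{0\}$ together with the fact that there are no $3$-boundaries in the complex $(\ref{hc1})$. Your optional self-contained argument also reproduces verbatim the paper's proof of that proposition via Lemmas \ref{cas} and \ref{reg}.
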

\begin{prop}
The following complexes are exact and the arrows are maps of degree zero:
\begin{equation}\label{z}
    0\longrightarrow K[P]\stackrel{\alpha}{\longrightarrow}\mathcal A(-\varpi(P))\oplus\mathcal A\stackrel{\beta}{\longrightarrow}ker\partial_1\longrightarrow 0
\end{equation}\\
$\alpha(u(P))=(-\frac{\partial u}{\partial P}, u)$; \\ $\beta(\alpha_1, \alpha_2)=\alpha_1\overrightarrow\nabla P+\overrightarrow\nabla\alpha_2.$ \\
\begin{equation}\label{y}
   0\longrightarrow K[P](-\varpi(P))\stackrel{\gamma}{\longrightarrow}\mathcal A(-\varpi(P))\stackrel{\epsilon}{\longrightarrow}ker\partial_2\longrightarrow 0
\end{equation}
$\gamma(u(P))=u(P)$; \\ $\epsilon(H)=\nabla H\times\overrightarrow\nabla P.$\\
\end{prop}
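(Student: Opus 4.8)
The plan is to verify exactness of the two short exact sequences separately, treating each as an assertion about kernels and cokernels of explicitly given $\mathcal A$-linear (in fact degree-zero weight-homogeneous) maps. For sequence~(\ref{z}), I would first check that $\beta$ is well-defined and that the composite $\beta\circ\alpha$ vanishes: indeed $\beta(\alpha(u(P)))=-\tfrac{\partial u}{\partial P}\,\overrightarrow\nabla P+\overrightarrow\nabla(u(P))$, and by the chain rule $\overrightarrow\nabla(u(P))=\tfrac{\partial u}{\partial P}\overrightarrow\nabla P$, so the sum is zero. Surjectivity of $\beta$ onto $\ker\partial_1$ is exactly the content of Proposition~\ref{1pc}, which describes $\ker\partial_1=\{\overrightarrow\nabla F+G\overrightarrow\nabla P\}$. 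So the only real work is injectivity of $\alpha$ and exactness in the middle. Injectivity of $\alpha$ is immediate: if $(-\tfrac{\partial u}{\partial P},u)=(0,0)$ then $u(P)=0$. For the middle: suppose $\beta(\alpha_1,\alpha_2)=\alpha_1\overrightarrow\nabla P+\overrightarrow\nabla\alpha_2=0$. Then $\overrightarrow\nabla\alpha_2=-\alpha_1\overrightarrow\nabla P$, so taking $\times\overrightarrow\nabla P$ gives $\overrightarrow\nabla\alpha_2\times\overrightarrow\nabla P=0$; by Lemma~\ref{cas}, $\alpha_2\in K[P]$, say $\alpha_2=u(P)$. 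Then $\overrightarrow\nabla\alpha_2=\tfrac{\partial u}{\partial P}\overrightarrow\nabla P$, whence $(\alpha_1+\tfrac{\partial u}{\partial P})\overrightarrow\nabla P=0$; since $\overrightarrow\nabla P\neq 0$ and $\mathcal A$ is a domain, $\alpha_1=-\tfrac{\partial u}{\partial P}$, so $(\alpha_1,\alpha_2)=\alpha(u(P))$ lies in the image of $\alpha$. That closes the first sequence.

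For sequence~(\ref{y}) the pattern is the same but shorter. The map $\epsilon(H)=\overrightarrow\nabla H\times\overrightarrow\nabla P$ lands in $\ker\partial_2$ and is surjective onto it — this is precisely Proposition~\ref{2pc}. The composite $\epsilon\circ\gamma$ vanishes because $\gamma(u(P))=u(P)\in K[P]$ and $\epsilon(u(P))=\overrightarrow\nabla(u(P))\times\overrightarrow\nabla P=\tfrac{\partial u}{\partial P}\,\overrightarrow\nabla P\times\overrightarrow\nabla P=0$. Injectivity of $\gamma$ is the inclusion $K[P]\hookrightarrow\mathcal A$, which is obvious. Exactness in the middle: if $\epsilon(H)=\overrightarrow\nabla H\times\overrightarrow\nabla P=0$, then $H\in K[P]$ by Lemma~\ref{cas}, i.e.\ $H=\gamma(u(P))$ for some $u$, so $\ker\epsilon=\operatorname{im}\gamma$. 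This completes the verification.

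Finally I would record the degree bookkeeping that makes the arrows degree zero. Each of the three vector operations $\overrightarrow\nabla(\cdot)$, $(\cdot)\,\overrightarrow\nabla P$, $\overrightarrow\nabla(\cdot)\times\overrightarrow\nabla P$ raises or changes weight by a fixed amount determined by $\varpi(P)$ and the weights $\varpi_i$; the shifts $\mathcal A(-\varpi(P))$, $K[P](-\varpi(P))$ in the statement are chosen exactly so that $\alpha$, $\beta$, $\gamma$, $\epsilon$ become homogeneous of degree zero, using that $\overrightarrow\nabla P$ has components of weights $\varpi(P)-\varpi_i$. Concretely one checks: $\beta$ sends $(\alpha_1,\alpha_2)$ of degrees $(i-\varpi(P),\,i)$ — reading off the summands — to an element of $\Omega^1(\mathcal A)_{i-|\varpi|}\cong\mathcal A^3$ in the appropriate graded piece, matching $\overrightarrow\nabla\alpha_2$ which lowers degree by $|\varpi|$ minus nothing in the identification of Remark on degrees; similarly for $\alpha$ and for the second sequence. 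This is routine once the convention is fixed, so I would state it and leave the displayed degree checks to the reader.

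The main obstacle is essentially notational rather than mathematical: all the hard analytic input (surjectivity onto $\ker\partial_1$, $\ker\partial_2$; the characterization of Casimirs; regularity of $(\lambda,P)$) has already been packaged into Propositions~\ref{1pc}, \ref{2pc} and Lemmas~\ref{cas}, \ref{reg}. The one point requiring a little care is making sure, in the middle-exactness argument for~(\ref{z}), that the identity $\overrightarrow\nabla(u(P))=\tfrac{\partial u}{\partial P}\,\overrightarrow\nabla P$ together with $\mathcal A$ being an integral domain really does force $\alpha_1=-\tfrac{\partial u}{\partial P}$ on the nose (not merely up to a Casimir multiple of $\overrightarrow\nabla P$) — but cancelling $\overrightarrow\nabla P\neq 0$ componentwise in the domain $\mathcal A$ handles this cleanly.
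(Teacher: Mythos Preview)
Your proof is correct and follows essentially the same approach as the paper: the key step in both is the middle-exactness argument for sequence~(\ref{z}), namely crossing $\alpha_1\overrightarrow\nabla P+\overrightarrow\nabla\alpha_2=0$ with $\overrightarrow\nabla P$ and invoking Lemma~\ref{cas} to force $\alpha_2\in K[P]$, then reading off $\alpha_1$. The paper's proof is more terse---it only writes out this middle-exactness step and declares the second sequence similar---whereas you additionally spell out surjectivity via Propositions~\ref{1pc} and~\ref{2pc}, the vanishing of the composites, and the degree bookkeeping, but there is no genuine difference in method.
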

\begin{proof}
Let us give the proof for the first sequence:\\
If $\alpha_1\overrightarrow\nabla P+\overrightarrow\nabla\alpha_2=0$, then $(\alpha_1\overrightarrow\nabla P+\overrightarrow\nabla\alpha_2)\times \overrightarrow\nabla P=0.$\\
Therefore, $\overrightarrow\nabla\alpha_2\times \overrightarrow\nabla P=0$ and $\alpha_2\in K[P].$ Then $\alpha_1=-\frac{\partial\alpha_2}{\partial P}.$\\
We can conclude that the first complex is exact. The proof of the exactness of the $2nd$ complex is similar to the first one.
\end{proof}
We also have the following trivial exact complex sequence :\\
\begin{equation}\label{w}
    0\longrightarrow ker\partial_{i+1}\longrightarrow\Omega^{i+1}(\mathcal A)\longrightarrow ker\partial_i\longrightarrow PH_i(\mathcal A, \pi)\longrightarrow 0
\end{equation}
where $i=0, 1, 2,$ and the arrows are maps of degree zero.\\
\begin{rem}
By using the exactness of the complexes (\ref{z}), (\ref{y}), (\ref{w}), we obtain the Poincaré's series of Poisson homology groups. We can obviously notice that these series do not depend on the polynomials $\lambda$ and $P$, but on weights and the degree of $P,$ and it is surprising to observe that these series do not depend on the weight of $\lambda.$
\end{rem}
We are going to explicitly calculate these Poincaré's series when the Casimir $P$ is quadratic.\\
\begin{theo}
If $\varpi_1=\varpi_2=\varpi_3=1$ and $\varpi(\lambda)=1,$ $\varpi(P)=2$, then as K-vector spaces, the Poisson homological groups $H_i(\Omega^\bullet(\mathcal A), \pi), i=1,2,3$ have the following Poincaré series:\\
$$\begin{array}{lcl}
P(PH_0(\mathcal A, \pi),t)&=&\frac{-t^2+2t+1}{(1-t^2)(1-t)} ;\\
&&\\
P(PH_1(\mathcal A, \pi),t)&=&\frac{t(2t^2+t+1}{(1-t^2)(1-t)};\\
&&\\
P(PH_2(\mathcal A, \pi),t)&=&\frac{2t^3}{(1-t^2)(1-t)}; \\
&&\\
P(PH_3(\mathcal A, \pi),t)&=&0. \\
\end{array}$$
\end{theo}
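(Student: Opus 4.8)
The plan is to compute each Poincaré series by exploiting the three exact complexes (\ref{z}), (\ref{y}), (\ref{w}) given just above, together with the explicit knowledge of the graded pieces $\Omega^k(\mathcal A)_i$ recorded in Section 5 and the regularity hypotheses. Throughout, I write $P(V,t)=\sum_i \dim_K V_i\, t^i$ for a graded vector space $V$, and I use that the Poincaré series of a direct sum is the sum of the series, that of a degree-$d$ shift $V(-d)$ is $t^{d}P(V,t)$, and that in an exact sequence of graded spaces with degree-zero arrows the alternating sum of Poincaré series vanishes. With the specialization $\varpi_1=\varpi_2=\varpi_3=1$, $\varpi(\lambda)=1$, $\varpi(P)=2$, we have $P(\mathcal A,t)=\tfrac{1}{(1-t)^3}$ and $P(K[P],t)=\tfrac{1}{1-t^2}$.

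The first step is to get $P(\ker\partial_1,t)$ from (\ref{z}): the sequence $0\to K[P]\to \mathcal A(-2)\oplus\mathcal A\to \ker\partial_1\to 0$ gives
$$P(\ker\partial_1,t)=\frac{t^2}{(1-t)^3}+\frac{1}{(1-t)^3}-\frac{1}{1-t^2}=\frac{1+t^2}{(1-t)^3}-\frac{1}{1-t^2}.$$
Similarly, (\ref{y}) reads $0\to K[P](-2)\to \mathcal A(-2)\to \ker\partial_2\to 0$, so
$$P(\ker\partial_2,t)=\frac{t^2}{(1-t)^3}-\frac{t^2}{1-t^2}.$$
Finally $\ker\partial_3=0$, so $P(\ker\partial_3,t)=0$ and $PH_3=0$, which is the last line of the theorem.

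The second step is to feed these into (\ref{w}), namely $0\to \ker\partial_{i+1}\to \Omega^{i+1}(\mathcal A)\to \ker\partial_i\to PH_i\to 0$ for $i=0,1,2$, using $P(\Omega^0(\mathcal A),t)=P(\Omega^3(\mathcal A),t)=\tfrac{1}{(1-t)^3}$ and $P(\Omega^1(\mathcal A),t)=P(\Omega^2(\mathcal A),t)=\tfrac{3}{(1-t)^3}$ (from the isomorphisms $\Omega^1\cong\Omega^2\cong\mathcal A^3$, $\Omega^0\cong\Omega^3\cong\mathcal A$ with the chosen weights). Exactness yields
$$P(PH_i,t)=P(\ker\partial_i,t)-P(\Omega^{i+1}(\mathcal A),t)+P(\ker\partial_{i+1},t).$$
For $i=2$ this gives $P(PH_2,t)=P(\ker\partial_2,t)-\tfrac{1}{(1-t)^3}+0$, which after using $\tfrac{1}{1-t^2}=\tfrac{1}{(1-t)(1+t)}$ and clearing denominators should simplify to $\tfrac{2t^3}{(1-t^2)(1-t)}$. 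For $i=1$, $P(PH_1,t)=P(\ker\partial_1,t)-\tfrac{3}{(1-t)^3}+P(\ker\partial_2,t)$, expected to collapse to $\tfrac{t(2t^2+t+1)}{(1-t^2)(1-t)}$. For $i=0$, $P(PH_0,t)=P(\ker\partial_0,t)-\tfrac{3}{(1-t)^3}+P(\ker\partial_1,t)$; here $\ker\partial_0=\Omega^0(\mathcal A)=\mathcal A$ (there is no $\partial_0$, so the extreme term of the homology complex contributes all of $\mathcal A$), giving $P(PH_0,t)=\tfrac{1}{(1-t)^3}-\tfrac{3}{(1-t)^3}+P(\ker\partial_1,t)$, which should reduce to $\tfrac{-t^2+2t+1}{(1-t^2)(1-t)}$.

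The only real work — and the place to be careful — is the final rational-function bookkeeping: each $P(PH_i,t)$ comes out as a difference of two expressions, one with denominator $(1-t)^3$ and one with denominator $1-t^2=(1-t)(1+t)$, and one must put everything over the common denominator $(1-t)^3(1+t)=(1-t^2)(1-t)^2$, cancel a factor $(1-t)$ in the numerator, and check that the result matches the claimed $\tfrac{\cdot}{(1-t^2)(1-t)}$. It is worth double-checking the low-degree coefficients by hand (e.g. $\dim PH_0$ in degrees $0,1,2$) against a direct count from the vector-calculus description of $PH_0=\mathcal A/\{\lambda(\nabla\times\vec H)\cdot\nabla P\}$ as a sanity check, and confirming that the alternating sum $\sum_i(-1)^iP(PH_i,t)$ equals $\sum_i(-1)^iP(\Omega^i(\mathcal A),t)=\tfrac{(1-t)^3}{(1-t)^3}\cdot(\text{sign pattern})$... more precisely that the Euler characteristic of the homology complex matches that of $(\Omega^\bullet,\partial)$, since the differentials have degree $\varpi(\lambda)+\varpi(P)=3\neq 0$ this is not literally an Euler-characteristic identity but the telescoping of the (\ref{w})'s still provides a consistency relation among the four series that the answers must satisfy. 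No conceptual obstacle remains beyond this algebra; the structural input (exactness of (\ref{z}), (\ref{y}), (\ref{w}), which rests on Lemmas \ref{cas}, \ref{reg} and Propositions \ref{1pc}, \ref{2pc}) has already been established.
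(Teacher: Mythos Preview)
Your strategy is exactly the one the paper has in mind (the remark immediately preceding the theorem says as much: one reads off the Poincar\'e series from the exact sequences (\ref{z}), (\ref{y}), (\ref{w})). There is, however, a concrete error in your input data that will prevent the algebra from closing.

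You set $P(\Omega^1,t)=P(\Omega^2,t)=\tfrac{3}{(1-t)^3}$ and $P(\Omega^3,t)=\tfrac{1}{(1-t)^3}$, treating $\Omega^k\cong\mathcal A^{\binom{3}{k}}$ without any shift. But the grading on $\Omega^k$ that makes the arrows of (\ref{z}), (\ref{y}) \emph{and} (\ref{w}) simultaneously of degree zero is the natural one in which $dx,dy,dz$ each carry weight $1$; under this convention $\Omega^k(\mathcal A)_j\cong(\mathcal A_{j-k})^{\binom{3}{k}}$, so
\[
P(\Omega^k,t)=\binom{3}{k}\frac{t^{k}}{(1-t)^3},\qquad k=0,1,2,3.
\]
(You can check this is forced: in (\ref{z}) the map $\alpha_2\mapsto\overrightarrow\nabla\alpha_2$ is declared degree zero, and $\overrightarrow\nabla\alpha_2=d\alpha_2$, which is degree zero only with this convention. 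And $\partial$ itself then has degree $\varpi(\lambda)+\varpi(P)-|\varpi|=1+2-3=0$, so (\ref{w}) is consistent.) With your unshifted values, every single alternating sum comes out with negative coefficients and cannot match the stated answers.

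Once you substitute the corrected $P(\Omega^k,t)$, your recipe gives
\[
P(PH_2,t)=\frac{t^2}{(1-t)^3}-\frac{t^2}{1-t^2}-\frac{t^3}{(1-t)^3}=\frac{2t^3}{(1-t^2)(1-t)},\qquad
P(PH_0,t)=\frac{2-3t+t^2}{(1-t)^3}-\frac{1}{1-t^2}=\frac{1+2t-t^2}{(1-t^2)(1-t)},
\]
exactly as claimed. For $PH_1$ the same computation yields $\dfrac{t(t^2+3)}{(1-t^2)(1-t)}$, which disagrees with the displayed formula. Here your own proposed sanity check is decisive: since $\partial$ is degree zero one must have $\sum_i(-1)^iP(PH_i,t)=\sum_i(-1)^iP(\Omega^i,t)=1$, and the four displayed series do \emph{not} satisfy this (they sum to $\tfrac{1+2t}{1-t^2}$), whereas $PH_0,PH_2,PH_3$ together with $\tfrac{t(t^2+3)}{(1-t^2)(1-t)}$ do. A direct low-degree check confirms this: $(\ker\partial_1)_1$ is all of $\Omega^1_1\cong K^3$ (constant $1$-forms are closed) and $(\mathrm{Im}\,\partial_2)_1=0$ (there are no nonzero $2$-forms of degree $1$), so $\dim(PH_1)_1=3$, matching $t(t^2+3)$ but not $t(2t^2+t+1)$. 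The printed $PH_1$ series (note its unbalanced parenthesis) thus appears to be a typo; your method, with the corrected $P(\Omega^k,t)$, is sound.
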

An example of such quadratic polynomial Poisson has been considered recently by Gurevich and Saponov in their paper \cite{gur}:
 \begin{equation}\label{exgur}
    \lambda=z,\ \ \  \ P= xy+\frac{1}{2}z^2.\\
 \end{equation}
   Another example is a particular case of a family of GJPS mentioned in the PhD thesis of Anne Pichereau in 2006 : $\lambda=z$ and $P=\frac{1}{2}(x^2+y^2+z^2).$ The general case considered by Pichereau is to set:
 \begin{equation}\label{expich}
\lambda=z, \ \ \ \  P=\frac{1}{n+1}(x^{n+1}+y^{n+1}+z^{n+1}).
\end{equation}

\begin{rem}
The first observation is that the third Poisson homological group of GJPS is always trivial. That was not the case for of JPS \cite{pic}. This confirms also the fact that the modular class is non trivial since in the trivial case, one would get, by Poincaré duality that $PH_3$ is isomorphic to $PH^0\cong K[P].$ The second remark explains  the fact that, as module over the Casimir algebra, $K[P]$, for $i=0, 1, 2,$ the Poisson homological group $PH_i(\mathcal A, \pi)$ is not of finite type as it is the case for quadritic JPS \cite{pic}.
\end{rem}

\section{Poisson cohomology of GJPS in dimension 3}
This section is devoted to the computation of the Poisson cohomology associated with a GJPS on $\mathcal A=K[x, y, z]$ given by  the polynomials $\lambda$ and $P,$ as Casimir, with additional conditions. We suppose that:
\begin{enumerate}
  \item $P$ is weight homogeneous according with some weights $\varpi_1,$ $\varpi_2,$ $\varpi_3,$ for the variables $x, y, z$ respectively;
  \item $P=\widetilde{P}+\frac{c}{r+2}z^{r+2},$ where $\widetilde{P}\in K[x,y],$ $c\in K,$ and $r\in\mathbb N;$
  \item $P$ has an isolated singularity;
  \item $\lambda=z.$
\end{enumerate}
It is interesting to note these assumptions satisfy the conditions considered in the section concerning the Poisson homology.\\
A direct consequence of these assumptions is that $\widetilde{P}$ is a weight homogeneous in $K[x, z]$ for some weights $\varpi'_1$ and $\varpi'_2$ associated to variables $x$ and $y$ respectively, may be different from $\varpi_1, \varpi_2,$ and has isolated singularity.\\
Examples of such GJPS are those considered by Gurevich-Saponov and Pichereau (\ref{exgur}), (\ref{expich}). These examples are homogeneous. Let us give a nonhomogeneous case:
\begin{equation}\label{nh}
\lambda=z, \ \ \ \  P=(x^2+y^2+z^3).
\end{equation}
In this last case, one observes that $\varpi'_i\neq\varpi_i,$ $i=1, 2.$
\subsection{Vectorial notations in dimension two}
Set $\mathcal B=K[x, y].$ We have the following isomorphisms of $\mathcal B$-modules:
$$\begin{array}{ccc}\Omega^1(\mathcal B)&\stackrel{\cong}{\longrightarrow} &\mathcal B^2\\
F_1dx+F_2dy&\longmapsto &(F_1,F_2)\\
\end{array}$$
$$\begin{array}{ccc}\Omega^2(\mathcal B)&\stackrel{\cong}{\longrightarrow} &\mathcal B\\
Fdx\wedge dy&\longmapsto &F\\
\end{array}$$
$$\begin{array}{ccc}\mathcal X^1(\mathcal B)&\stackrel{\cong}{\longrightarrow} &\mathcal B^2\\
F_1\frac{\partial}{\partial x}+F_2\frac{\partial}{\partial y}&\longmapsto &(F_1,F_2)\\
\end{array}$$
$$\begin{array}{ccc}\mathcal X^2(\mathcal B)&\stackrel{\cong}{\longrightarrow} &\mathcal B\\
F\frac{\partial}{\partial x}\wedge \frac{\partial}{\partial y}&\longmapsto &F\\
\end{array}$$
Then using these vectorial notations, the de Rham complex for the algebra $\mathcal B$ can be written as follows:
\begin{equation}\label{drc1}
  K{\hookrightarrow}\mathcal B\stackrel{d_0}{\longrightarrow}\mathcal B^2\stackrel{d_1}{\longrightarrow}\mathcal B{\longrightarrow} 0,
\end{equation}
where $d_0(F)=\left(\frac{\partial F}{\partial x}, \frac{\partial F}{\partial y}\right)^t;$  $d_1(\overrightarrow{F})=\frac{\partial F_2}{\partial x}-\frac{\partial F_1}{\partial y}.$\\
And from the $\star$-isomorphism between the Kähler differential forms and the skew-symmetric multi-derivation, the de Rham complex could take this second following form:
\begin{equation}\label{drc2}
\begin{array}{cccccccccc}
  K&{\hookrightarrow}&\mathcal B & \stackrel{d}{\longrightarrow} & \mathcal B^2 & \stackrel{d}{\longrightarrow} & \mathcal B&{\longrightarrow} &0\\
   & &F & \longmapsto & \overrightarrow{\Box}F &  &  &  & \\
&  &  &  &\overrightarrow{K} & \longmapsto & \mbox{Div}(\overrightarrow{K}) & &
\end{array}
\end{equation}
where $\overrightarrow{\Box}(F)=\left(\frac{\partial F}{\partial y}, -\frac{\partial F}{\partial x}\right)^t;$ $Div(\overrightarrow{K})=\frac{\partial K_1}{\partial x}+\frac{\partial K_2}{\partial y},$ $\overrightarrow{K}=(K_1, K_2)^t\in\mathcal B^2.$\\
It is well known that this de Rham complex is exact.\\
Now let $\varphi\in\mathcal B.$ The associated Koszul complex takes the following form:
\begin{equation}\label{kc1}
\begin{array}{cccccccc}
  0&{\hookrightarrow}&\mathcal B & \stackrel{\overrightarrow{\nabla}\varphi}{\longrightarrow} & \mathcal B^2 & \stackrel{\cdot\overrightarrow{\Box}\varphi}{\longrightarrow} & \mathcal B
  \end{array}
\end{equation}
As for the de Rham complex, using the $\star$-isomorphism between the Kähler differential forms and the skew-symmetric multi-derivation, the Koszul complex associated to $\varphi\in\mathcal B$ can be rewritten in the following second form:
\begin{equation}\label{kc2}
\begin{array}{cccccccc}
  0&{\hookrightarrow}&\mathcal B & \stackrel{\overrightarrow{\Box}\varphi}{\longrightarrow} & \mathcal B^2 & \stackrel{\cdot(-\overrightarrow{\nabla}\varphi)}{\longrightarrow} & \mathcal B
  \end{array}
\end{equation}
Fixing weights $\varpi_1', \varpi_2'\in\mathbb N^\star$ for the variables $x, y$, it is clear that $\mathcal B={\bigoplus_{i\in\mathbb N}}\mathcal B_i$, where $\mathcal B_0=K$ and for $i\in\mathbb N^\star$, $\mathcal B_i$ is the $K$-vector space generated by all weight homogeneous polynomials of degree $i$. Denoting by $\Omega^k(\mathcal B)_i$ the $K$-vector space given by $\Omega^k(\mathcal B)_i=\{P\in\Omega^k(\mathcal B) : \varpi(P)=i\}\cup\{0\}$, we have the following isomorphisms : \\
\begin{align}
    \Omega^2(\mathcal B)_i\cong\mathcal X^0(\mathcal B)_i&\cong\mathcal B_i\nonumber\\
     \Omega^1(\mathcal B)_i\cong\mathcal X^1(\mathcal B)_i&\cong\mathcal B_{i+\varpi_1}\times\mathcal B_{i+\varpi_2}\nonumber\\
     \Omega^0(\mathcal B)_i\cong\mathcal X^2(\mathcal B)_i&\cong\mathcal B_{i+\varpi_1+\varpi_2}\nonumber
 \end{align}
\begin{rem}
As maps from $\mathcal X^k(\mathcal B)$ to $\mathcal X^{k-1}(\mathcal B)$ each arrow of the complex given by (\ref{drc2}) is a weight homogeneous map of degree zero, while each arrow of the complex given by (\ref{kc2}) is a weight homogeneous map of degree $\varpi(\varphi),$ if $\varphi$ is a weight homogeneous element of $\mathcal B.$
\end{rem}
\subsection{Computation of Poisson cohomology of GJPS in dimension 3}
 We want to describe the Poisson cohomology associated with a GJPS on $\mathcal A=K[x, y, z]$ given by  the polynomials $\lambda$ and $P$ as Casimir. We suppose that:
\begin{enumerate}\label{cond}
  \item $P$ is weight homogeneous according for some weights $\varpi_1,$ $\varpi_2,$ $\varpi_3,$ for the variables $x, y, z$ respectively;
  \item $P=\widetilde{P}+\frac{1}{r+2}z^{r+2},$ where $\widetilde{P}\in K[x,y]$ and $r\in\mathbb N;$
  \item $P$ has an isolated singularity;
  \item $\lambda=z.$
\end{enumerate}
\begin{theo}
$PH^0(\mathcal A, \pi)=K[P].$
\end{theo}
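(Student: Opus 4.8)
The plan is short: the statement follows by combining the vectorial description of the zeroth Poisson cohomology group recorded above with Lemma~\ref{cas}, so there is essentially nothing to compute.

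First I would recall that, for the GJPS given by $\lambda$ and the Casimir $P$, the zeroth Poisson coboundary reads $\delta^0(F)=-\lambda\,\overrightarrow\nabla F\times\overrightarrow\nabla P$. Since $\mathcal A=K[x,y,z]$ is an integral domain and $\lambda=z\neq 0$, multiplication by $\lambda$ is injective on each coordinate, so the vanishing of $\lambda\,(\overrightarrow\nabla F\times\overrightarrow\nabla P)$ is equivalent to the vanishing of $\overrightarrow\nabla F\times\overrightarrow\nabla P$. Hence
$$PH^0(\mathcal A,\pi)=\ker\delta^0=\{F\in\mathcal A:\ \overrightarrow\nabla F\times\overrightarrow\nabla P=0\},$$
which is exactly the formula for $PH^0$ already displayed in the vectorial reformulation of the Poisson cohomology.

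Second, under the standing assumptions $P$ is weight homogeneous and has an isolated singularity, so the Koszul complex associated with $\frac{\partial P}{\partial x},\frac{\partial P}{\partial y},\frac{\partial P}{\partial z}$ is exact (Cohen--Macaulay); this is precisely the hypothesis used in the proof of Lemma~\ref{cas}. That lemma then yields $\overrightarrow\nabla F\times\overrightarrow\nabla P=0$ if and only if $F\in K[P]$, and therefore $PH^0(\mathcal A,\pi)=K[P]$.

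There is no genuine obstacle here. The only point worth spelling out is that one should reduce to weight homogeneous $F$ before applying Lemma~\ref{cas}: this is legitimate because $F\mapsto\overrightarrow\nabla F\times\overrightarrow\nabla P$ is a weight homogeneous operator (of degree $\varpi(P)$) and both $\mathcal A$ and $K[P]$ are graded by the weight, so the equivalence can be checked componentwise on the decomposition $\mathcal A=\bigoplus_{i\in\mathbb N}\mathcal A_i$.
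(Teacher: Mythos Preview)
Your proof is correct and follows exactly the paper's approach: the paper's own proof is the single line ``It is a direct consequence of lemma~\ref{cas}.'' You have simply spelled out the (entirely routine) reasons why the $\lambda$ factor may be dropped and why one may reduce to weight homogeneous $F$, which the paper leaves implicit.
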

\begin{proof}
It is a direct consequence of lemma \ref{cas}.
\end{proof}
Since $\widetilde{P}$ is weight homogeneous in $\mathcal B=K[x, y],$ for some weights $\varpi_1'$ and $\varpi_2'$ of variables $x$ and $y$ respectively and has an isolated singularity, $\mathcal B_{sing}(\widetilde{P})=\frac{\mathcal B}{\left\{\frac{\partial \widetilde{P} }{\partial x}, \frac{\partial \widetilde{P} }{\partial y}\right\}}$ has a finite dimension as a $K$-vector space.  We denote by $\mu(\widetilde{P})$ the associated Milnor number and  by $\vartheta_0=1,\cdots, \vartheta_{\mu(\widetilde{P})-1},$ a basis of the vector space $\mathcal B_{sing}(\widetilde{P}).$ We will denote by $\varpi'(F)$ the weight homogeneous degree of a weight homogeneous element $F$ of $\mathcal B.$\\
We have the same result for $P\in\mathcal A=K[x, y, z].$ We will denote by $\mu_0=1,\cdots, \mu_{\mu(P)-1}$ a basis of the $K$-vector space $\mathcal A_{sing}(P).$
\begin{lem}\label{lema} We have the following isomorphism of $K$-vector spaces:
$$\frac{\mathcal B}{\left\{\overrightarrow{\Box} Q\cdot\overrightarrow{\nabla}\widetilde{P},\  Q\in\mathcal B\right\}}=\displaystyle{\bigoplus_{i\in \mathbb N}}\displaystyle{\bigoplus_{j=0}^{\mu(\widetilde{P})-1}}K\widetilde{P}^i\vartheta_j.$$
\end{lem}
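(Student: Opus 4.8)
Write $J:=\{\overrightarrow{\Box}Q\cdot\overrightarrow{\nabla}\widetilde{P}:\ Q\in\mathcal B\}$, so that the assertion is precisely that the classes of the monomials $\widetilde{P}^{\,i}\vartheta_j$ ($i\in\mathbb N$, $0\le j\le\mu(\widetilde{P})-1$) form a $K$-basis of $\mathcal B/J$. The plan is to prove separately that these classes \emph{span} $\mathcal B/J$ and that they are \emph{linearly independent}, the latter by comparing Poincaré series. Two preliminary remarks. Writing $\{f,g\}:=\frac{\partial f}{\partial x}\frac{\partial g}{\partial y}-\frac{\partial f}{\partial y}\frac{\partial g}{\partial x}$, one has $\overrightarrow{\Box}Q\cdot\overrightarrow{\nabla}\widetilde{P}=-\{Q,\widetilde{P}\}$, so $J$ is the image of the weight homogeneous endomorphism $Q\mapsto\{Q,\widetilde{P}\}$ of $\mathcal B$, of degree $e:=\varpi'(\widetilde{P})-\varpi_1'-\varpi_2'$; hence $\mathcal B/J$ is graded with finite-dimensional homogeneous components. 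Moreover the Leibniz rule gives $\widetilde{P}\,\{Q,\widetilde{P}\}=\{\widetilde{P}Q,\widetilde{P}\}$, so that $\widetilde{P}\,J\subseteq J$; and since $\widetilde{P}$ is weight homogeneous I may take the $\vartheta_j$ weight homogeneous, so that $\mathcal B=\mathfrak J\oplus\bigoplus_jK\vartheta_j$ as graded vector spaces, where $\mathfrak J:=\langle\frac{\partial\widetilde{P}}{\partial x},\frac{\partial\widetilde{P}}{\partial y}\rangle$. Note also that $\widetilde{P}$ has positive weight degree, since it has an isolated singularity.

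\emph{Spanning.} I would argue by induction on the weight degree of a weight homogeneous $h\in\mathcal B$, the case $\varpi'(h)=0$ being trivial. Using $\mathcal B=\mathfrak J\oplus\bigoplus_jK\vartheta_j$, decompose $h=\sum_jc_j\vartheta_j+g_1\frac{\partial\widetilde{P}}{\partial x}+g_2\frac{\partial\widetilde{P}}{\partial y}$ with $g_1,g_2$ weight homogeneous. The key computation is the congruence
$$g\,\frac{\partial\widetilde{P}}{\partial x}\ \equiv\ \frac{\varpi'(\widetilde{P})}{\varpi'(g)+\varpi_2'}\,\widetilde{P}\,\frac{\partial g}{\partial x}\pmod J$$
(and its analogue for $\partial/\partial y$). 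To get it, pass to $2$-forms through the isomorphism $\Omega^2(\mathcal B)\cong\mathcal B$, under which $J$ corresponds to $d\widetilde{P}\wedge d\mathcal B$ and $g\,\frac{\partial\widetilde{P}}{\partial x}\,dx\wedge dy=d\widetilde{P}\wedge(g\,dy)$; apply the Cartan identity $d\,i_{\vec e_{\varpi'}}\alpha+i_{\vec e_{\varpi'}}\,d\alpha=\varpi'(\alpha)\,\alpha$ to the weight homogeneous $1$-form $\alpha=g\,dy$ (legitimate since $\varpi'(\alpha)=\varpi'(g)+\varpi_2'>0$), where $\vec e_{\varpi'}=\varpi_1'x\frac{\partial}{\partial x}+\varpi_2'y\frac{\partial}{\partial y}$; wedging with $d\widetilde{P}$ and invoking Euler's identity $\varpi_1'x\frac{\partial\widetilde{P}}{\partial x}+\varpi_2'y\frac{\partial\widetilde{P}}{\partial y}=\varpi'(\widetilde{P})\widetilde{P}$, the term coming from $i_{\vec e_{\varpi'}}\,d\alpha$ becomes $\varpi'(\widetilde{P})\,\widetilde{P}\,\frac{\partial g}{\partial x}\,dx\wedge dy$ while the other term lies in $d\widetilde{P}\wedge d\mathcal B$. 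Since $\frac{\partial g_1}{\partial x}$ has strictly smaller weight degree than $h$ (its degree equals $\varpi'(h)-\varpi'(\widetilde{P})$), the induction hypothesis applies to it, and then $\widetilde{P}\,J\subseteq J$ yields $g_1\,\frac{\partial\widetilde{P}}{\partial x}\equiv\sum a_{i',j'}\widetilde{P}^{\,i'+1}\vartheta_{j'}\pmod J$; similarly for the $\partial/\partial y$ term. Hence $h$ is, modulo $J$, a combination of the $\widetilde{P}^{\,i}\vartheta_j$, which closes the induction.

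\emph{Independence via Poincaré series.} Since the $\widetilde{P}^{\,i}\vartheta_j$ form a weight homogeneous spanning family of a graded space with finite-dimensional components, it is enough to show that the formal series $\sum_{i,j}t^{\,i\varpi'(\widetilde{P})+\varpi'(\vartheta_j)}=\frac{1}{1-t^{\varpi'(\widetilde{P})}}\,P(\mathcal B_{sing}(\widetilde{P}),t)$ equals $P(\mathcal B/J,t)$. On the left, $P(\mathcal B_{sing}(\widetilde{P}),t)=\frac{(1-t^{\varpi'(\widetilde{P})-\varpi_1'})(1-t^{\varpi'(\widetilde{P})-\varpi_2'})}{(1-t^{\varpi_1'})(1-t^{\varpi_2'})}$ by exactness of the Koszul complex of the regular sequence $\frac{\partial\widetilde{P}}{\partial x},\frac{\partial\widetilde{P}}{\partial y}$. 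On the right, I would first prove the two-variable version of Lemma~\ref{cas}, namely $\{Q,\widetilde{P}\}=0$ iff $Q\in K[\widetilde{P}]$, by repeating its proof word for word (exactness of the Koszul complex (\ref{kc2}) for $\widetilde{P}$, the Poincaré lemma (\ref{drc2}), and induction on degree); this identifies the kernel of $Q\mapsto\{Q,\widetilde{P}\}$ with $K[\widetilde{P}]$, so that $P(J,t)=t^{e}\bigl(P(\mathcal B,t)-P(K[\widetilde{P}],t)\bigr)$ and $P(\mathcal B/J,t)=(1-t^{e})P(\mathcal B,t)+\frac{t^{e}}{1-t^{\varpi'(\widetilde{P})}}$. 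Clearing denominators and substituting $e=\varpi'(\widetilde{P})-\varpi_1'-\varpi_2'$ shows the two series coincide, and the lemma follows.

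The Koszul- and Poincaré-lemma inputs and the series bookkeeping are routine; the step I expect to require the most care is the congruence $g\,\frac{\partial\widetilde{P}}{\partial x}\equiv c\,\widetilde{P}\,\frac{\partial g}{\partial x}\pmod J$, which is what makes $\mathcal B/J$ collapse onto the $K[\widetilde{P}]$-span of a Milnor basis; it is the weight-homogeneous incarnation of the classical freeness of the Brieskorn lattice of an isolated plane-curve singularity.
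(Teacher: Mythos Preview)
Your argument is correct. The spanning half is morally the same as the paper's: both reduce $\overrightarrow{G}\cdot\overrightarrow{\nabla}\widetilde P$ (equivalently $g_1\partial_x\widetilde P+g_2\partial_y\widetilde P$) modulo $J$ to $\widetilde P$ times a lower-degree element by means of the Euler identity $\vec e_{\varpi'}\cdot\overrightarrow{\nabla}\widetilde P=\varpi'(\widetilde P)\,\widetilde P$, and then recurse. The paper does this in vector-field language, splitting $\overrightarrow{G}=\overrightarrow{\Box}U+\frac{1}{\varpi'(F)-\varpi'(\widetilde P)+|\varpi'|}\,\mathrm{Div}(\overrightarrow{G})\,\vec e_{\varpi'}$ via exactness of the de~Rham complex (\ref{drc2}) and recursing on $\mathrm{Div}(\overrightarrow{G})$; you do the equivalent computation in $\Omega^2(\mathcal B)$ via the Cartan homotopy $d\,i_{\vec e_{\varpi'}}+i_{\vec e_{\varpi'}}d=\varpi'(\cdot)\,\mathrm{id}$, recursing on $\partial_x g_1$ and $\partial_y g_2$ separately. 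The two packagings are interchangeable.

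Where your proof genuinely adds something is the independence half. The paper states the lemma as a direct sum but its proof only establishes spanning (``Our purpose is to prove that there exist $Q$ and constants $\alpha_{i,j}$ such that \dots''). Your Poincar\'e-series comparison fills this gap cleanly: identifying the kernel of $Q\mapsto\{Q,\widetilde P\}$ with $K[\widetilde P]$ (the two-variable analogue of Lemma~\ref{cas}, proved exactly as you say) gives $P(J,t)=t^{e}\bigl(P(\mathcal B,t)-\frac{1}{1-t^{\varpi'(\widetilde P)}}\bigr)$, and the resulting identity $(1-t^e)P(\mathcal B,t)+\frac{t^e}{1-t^{\varpi'(\widetilde P)}}=\frac{(1-t^{\varpi'(\widetilde P)-\varpi_1'})(1-t^{\varpi'(\widetilde P)-\varpi_2'})}{(1-t^{\varpi'(\widetilde P)})(1-t^{\varpi_1'})(1-t^{\varpi_2'})}$ is a straightforward check. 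So your approach is not only correct but completes the argument.
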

\begin{proof}
Let $F\in\mathcal B$ be a weight homogeneous polynomial. Our purpose is to prove that there exist $Q\in\mathcal B$ and constants $\alpha_{i,j}\in K$ such that:
\begin{equation}\label{lema1}
F=\overrightarrow{\Box} Q\cdot\overrightarrow{\nabla}\widetilde{P}+\displaystyle{\sum_{i\in \mathbb N}}\displaystyle{\sum_{j=0}^{\mu(\widetilde{P})-1}}\alpha_{i,j}\widetilde{P}^i\vartheta_j.
\end{equation}

We have:
$$F=\overrightarrow{G}\cdot\overrightarrow{\nabla}\widetilde{P}+\displaystyle{\sum_{j=0}^{\mu(\widetilde{P})-1}}\alpha_j\vartheta_j,$$
where $\overrightarrow{G}\in\mathcal X^1(\mathcal B)_{\varpi'(F)-\varpi'(\widetilde{P})},$ and $\alpha_0,\cdots, \alpha_{\mu(\widetilde{P})-1}\in K.$ Let $\widetilde{\varpi}=max(\varpi_1', \varpi_2').$\\
If $\varpi'(F)<\varpi'(\widetilde{P})-\widetilde{\varpi},$ then $\overrightarrow{G}=(a,b)^t\in K^2.$ Therefore $\overrightarrow{G}=\overrightarrow{\Box}Q,$ $Q=-bx+ay.$ And we get the expected result.\\
Now let us suppose that $\varpi'(F)\geq\varpi'(\widetilde{P})-\widetilde{\varpi}$ and that for any weight homogeneous polynomial $L\in\mathcal B$ of degree less than $\varpi'(F)-1,$ an equality of the form (\ref{lema1}) holds.
Since
$$Div\left(\overrightarrow{G}-\frac{1}{\varpi'(F)-\varpi'(\widetilde{P})+|\varpi'|}Div(\overrightarrow{G})\vec e_{\varpi'}\right)=0,$$
where $|\varpi'|=\varpi_1'+\varpi_2';$  $\vec e_{\varpi'}=(\varpi_1'x_1, \varpi_2'x_2)^t,$ and according to exactness of the de Rham complex \ref{drc2}, there exists a weight homogeneous polynomial $U\in\mathcal B$ such that:
$$\overrightarrow{G}=\overrightarrow{\Box}U+\frac{1}{\varpi'(F)-\varpi'(\widetilde{P})+|\varpi'|}Div(\overrightarrow{G})\vec e_{\varpi'}.$$
Then
$$\overrightarrow{G}\cdot\overrightarrow{\nabla}\widetilde{P}=\overrightarrow{\Box}U\cdot\overrightarrow{\nabla}\widetilde{P}+\frac{\varpi'(\widetilde{P})}
{\varpi'(F)-\varpi'(\widetilde{P})+|\varpi'|}Div(\overrightarrow{G})\widetilde{P}.$$
Since $\varpi'(Div(\overrightarrow{G}))=\varpi'(F)-\varpi'(\widetilde{P})$ and according to the recursion hypothesis, there exist $V\in\mathcal B$ and constants $\beta_{i,j}\in K$ such that:
$$Div(\overrightarrow{G})=\overrightarrow{\Box} V\cdot\overrightarrow{\nabla}\widetilde{P}+\displaystyle{\sum_{i\in \mathbb N}}\displaystyle{\sum_{j=0}^{\mu(\widetilde{P})-1}}\beta_{i,j}\widetilde{P}^i\vartheta_j.$$
Therefore, we obtain the expected result.
\end{proof}
\begin{prop}\label{p1}
 We have the following isomorphism of $K$-vector spaces:
$$\frac{\mathcal A}{\left\{(\overrightarrow{\nabla}\lambda\times\overrightarrow{\nabla}P)\cdot\overrightarrow{\nabla}Q,\  Q\in\mathcal A\right\}}=\displaystyle{\bigoplus_{i, j\in \mathbb N}}\displaystyle{\bigoplus_{k=0}^{\mu(\widetilde{P})-1}}K\widetilde{P}^i\lambda^j\vartheta_k.$$
\end{prop}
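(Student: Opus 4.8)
The plan is to reduce the three-variable assertion to the two-variable Lemma \ref{lema}, using crucially that $\lambda=z$ and that $\widetilde{P}$ does not involve the variable $z$.

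First I would compute the relevant cross product explicitly. Since $\lambda=z$ we have $\overrightarrow{\nabla}\lambda=(0,0,1)^t$, and since $P=\widetilde{P}+\frac{1}{r+2}z^{r+2}$ with $\widetilde{P}\in K[x,y]$ we get $\overrightarrow{\nabla}P=(\partial_x\widetilde{P},\ \partial_y\widetilde{P},\ z^{r+1})^t$, whence
$$\overrightarrow{\nabla}\lambda\times\overrightarrow{\nabla}P=\left(-\partial_y\widetilde{P},\ \partial_x\widetilde{P},\ 0\right)^t .$$
Consequently, for every $Q\in\mathcal A=K[x,y,z]$,
$$(\overrightarrow{\nabla}\lambda\times\overrightarrow{\nabla}P)\cdot\overrightarrow{\nabla}Q=\partial_x\widetilde{P}\,\partial_y Q-\partial_y\widetilde{P}\,\partial_x Q ,$$
and the decisive point is that no $z$-derivative of $Q$ occurs on the right.

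Next I would use the $\mathcal B$-module decomposition $\mathcal A=\bigoplus_{j\in\mathbb N}\mathcal B z^j$, where $\mathcal B=K[x,y]$. Writing $Q=\sum_j Q_j z^j$ with $Q_j\in\mathcal B$, the formula above becomes $(\overrightarrow{\nabla}\lambda\times\overrightarrow{\nabla}P)\cdot\overrightarrow{\nabla}Q=\sum_j\bigl(\partial_x\widetilde{P}\,\partial_y Q_j-\partial_y\widetilde{P}\,\partial_x Q_j\bigr)z^j$. Hence the $K$-linear map $Q\mapsto(\overrightarrow{\nabla}\lambda\times\overrightarrow{\nabla}P)\cdot\overrightarrow{\nabla}Q$ preserves each summand $\mathcal B z^j$ and acts on it, after identifying $\mathcal B z^j$ with $\mathcal B$, as $R\mapsto\overrightarrow{\Box}R\cdot\overrightarrow{\nabla}\widetilde{P}$ (indeed $\overrightarrow{\Box}R=(\partial_y R,-\partial_x R)^t$, so $\overrightarrow{\Box}R\cdot\overrightarrow{\nabla}\widetilde{P}=\partial_x\widetilde{P}\,\partial_y R-\partial_y\widetilde{P}\,\partial_x R$, exactly the operator appearing in Lemma \ref{lema}, with no sign discrepancy). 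Therefore its image is $\bigoplus_{j\in\mathbb N}\{\overrightarrow{\Box}R\cdot\overrightarrow{\nabla}\widetilde{P}:R\in\mathcal B\}\,z^j$, and passing to the quotient,
$$\frac{\mathcal A}{\left\{(\overrightarrow{\nabla}\lambda\times\overrightarrow{\nabla}P)\cdot\overrightarrow{\nabla}Q,\ Q\in\mathcal A\right\}}\ \cong\ \bigoplus_{j\in\mathbb N}\ \frac{\mathcal B}{\left\{\overrightarrow{\Box}R\cdot\overrightarrow{\nabla}\widetilde{P},\ R\in\mathcal B\right\}}\,z^j .$$

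Finally I would invoke Lemma \ref{lema}: since $\widetilde{P}\in\mathcal B$ is weight homogeneous with an isolated singularity, $\mathcal B/\{\overrightarrow{\Box}R\cdot\overrightarrow{\nabla}\widetilde{P}\}=\bigoplus_{i\in\mathbb N}\bigoplus_{k=0}^{\mu(\widetilde{P})-1}K\widetilde{P}^i\vartheta_k$. Substituting this in and recalling $\lambda=z$ yields
$$\frac{\mathcal A}{\left\{(\overrightarrow{\nabla}\lambda\times\overrightarrow{\nabla}P)\cdot\overrightarrow{\nabla}Q,\ Q\in\mathcal A\right\}}=\bigoplus_{i,j\in\mathbb N}\bigoplus_{k=0}^{\mu(\widetilde{P})-1}K\widetilde{P}^i\lambda^j\vartheta_k ,$$
which is the claimed isomorphism. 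I do not expect a genuine obstacle here: the only steps needing attention are verifying that the third component of $\overrightarrow{\nabla}\lambda\times\overrightarrow{\nabla}P$ vanishes — so that the operator is ``$z$-free'' and its image really does split as a direct sum over powers of $z$ — and matching the slice-wise operator with the one in Lemma \ref{lema}; both are immediate consequences of the hypotheses $\lambda=z$ and $\widetilde{P}\in K[x,y]$.
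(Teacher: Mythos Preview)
Your proof is correct and follows essentially the same approach as the paper: decompose $\mathcal A$ as $\bigoplus_{j}\mathcal B\,\lambda^j$, observe that the operator $Q\mapsto(\overrightarrow{\nabla}\lambda\times\overrightarrow{\nabla}P)\cdot\overrightarrow{\nabla}Q$ acts slice-wise as $R\mapsto\overrightarrow{\Box}R\cdot\overrightarrow{\nabla}\widetilde{P}$, and invoke Lemma~\ref{lema} on each slice. Your presentation is in fact slightly cleaner than the paper's, since by exhibiting the quotient as a direct sum of copies of the two-variable quotient you make the direct-sum (linear independence) part of the isomorphism transparent, whereas the paper's argument, as written, only records the spanning half.
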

\begin{proof}
Let $F\in\mathcal A=K[x, y, z].$ We have:
$$F=\displaystyle{\sum_{j\in\mathbb N}}F_j\lambda^j,$$
where $F_i\in\mathcal B.$ According to the previous lemma, there exist polynomials $Q_j\in\mathcal B$ and some constants $\alpha_{i,j,k}\in K$ such that
$$F_j=\overrightarrow{\Box} Q_j\cdot\overrightarrow{\nabla}\widetilde{P}+\displaystyle{\sum_{i\in \mathbb N}}\displaystyle{\sum_{k=0}^{\mu(\widetilde{P})-1}}\alpha_{i,j,k}\widetilde{P}^i\vartheta_k.$$
Since $$\overrightarrow{\Box} Q_j\cdot\overrightarrow{\nabla}\widetilde{P}=(\overrightarrow{\nabla}\lambda\times\overrightarrow{\nabla}P)\cdot\overrightarrow{\nabla}Q_i$$
 and
$$\lambda^j(\overrightarrow{\nabla}\lambda\times\overrightarrow{\nabla}P)\cdot\overrightarrow{\nabla}Q_i=
(\overrightarrow{\nabla}\lambda\times\overrightarrow{\nabla}P)\cdot\overrightarrow{\nabla}(\lambda^jQ_i),$$
we get the expected result.
\end{proof}
\begin{prop}\label{p2}\cite{pic} Let $F\in\mathcal A=K[x, y, z].$ There exists $\overrightarrow{G}\in\mathcal A^3$ such that $$F\in\overrightarrow{\nabla}P\cdot\left(\overrightarrow{\nabla}\times\overrightarrow{G}\right)+\displaystyle{\bigoplus_{s=0}^{\mu(P)-1}}K[P]\mu_s.$$
\end{prop}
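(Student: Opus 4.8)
The plan is to adapt the strategy of Lemma \ref{lema} to dimension three, proceeding by induction on the weight-homogeneous degree. Since every map occurring is weight homogeneous of a fixed degree, it suffices to treat $F$ weight homogeneous of degree $d$; fix a \emph{homogeneous} basis $\mu_0=1,\dots,\mu_{\mu(P)-1}$ of $\mathcal A_{sing}(P)$ (possible because $\mathcal A_{sing}(P)$ is a graded finite-dimensional space, by the isolated-singularity hypothesis). First I would observe that the image of the map $\overrightarrow G\mapsto\overrightarrow G\cdot\overrightarrow\nabla P$ is exactly the Jacobian ideal $\langle\partial_xP,\partial_yP,\partial_zP\rangle$ (its cokernel being $\mathcal A_{sing}(P)$; the exactness of the Koszul complex associated with $P$, guaranteed by Cohen--Macaulay, is the structural input here). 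Hence there are a weight-homogeneous $\overrightarrow G_0\in\mathcal A^3$, of derivation degree $d-\varpi(P)$, and constants $\alpha_s\in K$ with $F=\overrightarrow G_0\cdot\overrightarrow\nabla P+\sum_s\alpha_s\mu_s$. The term $\sum_s\alpha_s\mu_s$ already lies in $\bigoplus_s K[P]\mu_s$, and if $\overrightarrow G_0=0$ we are done; so assume $\overrightarrow G_0\neq0$.

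Next I would split off from $\overrightarrow G_0$ a multiple of the Euler vector field. Using $\mathrm{Div}(c\,\vec e_\varpi)=(\varpi(c)+|\varpi|)c$ (the second identity of Proposition \ref{pr1} together with Euler's formula) with $c:=\frac{1}{\,d-\varpi(P)+|\varpi|\,}\mathrm{Div}(\overrightarrow G_0)$, one gets $\mathrm{Div}\bigl(\overrightarrow G_0-c\,\vec e_\varpi\bigr)=0$; the scalar $d-\varpi(P)+|\varpi|$ is nonzero because $\overrightarrow G_0\neq0$ forces some component $(G_0)_k\in\mathcal A_{d-\varpi(P)+\varpi_k}$ to be nonzero, whence $d-\varpi(P)\geq-\varpi_k$ and $d-\varpi(P)+|\varpi|\geq\sum_{j\neq k}\varpi_j>0$. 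By exactness of the de Rham complex $(\ref{qa})$ at its last copy of $\mathcal A^3$, there is $\overrightarrow H\in\mathcal A^3$ with $\overrightarrow G_0-c\,\vec e_\varpi=\overrightarrow\nabla\times\overrightarrow H$, and then, by Euler's formula $\vec e_\varpi\cdot\overrightarrow\nabla P=\varpi(P)P$,
\[
\overrightarrow G_0\cdot\overrightarrow\nabla P=(\overrightarrow\nabla\times\overrightarrow H)\cdot\overrightarrow\nabla P+\frac{\varpi(P)}{\,d-\varpi(P)+|\varpi|\,}\,\mathrm{Div}(\overrightarrow G_0)\,P .
\]

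Since $\mathrm{Div}(\overrightarrow G_0)$ is weight homogeneous of degree $d-\varpi(P)<d$, I would then apply the induction hypothesis to it: $\mathrm{Div}(\overrightarrow G_0)=(\overrightarrow\nabla\times\overrightarrow G_1)\cdot\overrightarrow\nabla P+\sum_s p_s(P)\mu_s$ with $\overrightarrow G_1\in\mathcal A^3$ and $p_s\in K[P]$. Multiplying by $P$ and using $\overrightarrow\nabla\times(P\overrightarrow G_1)=P\,\overrightarrow\nabla\times\overrightarrow G_1+\overrightarrow\nabla P\times\overrightarrow G_1$ (the first identity of Proposition \ref{pr1}) together with $(\overrightarrow\nabla P\times\overrightarrow G_1)\cdot\overrightarrow\nabla P=0$ (the fourth identity), one rewrites $P\,(\overrightarrow\nabla\times\overrightarrow G_1)\cdot\overrightarrow\nabla P=(\overrightarrow\nabla\times(P\overrightarrow G_1))\cdot\overrightarrow\nabla P$. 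Substituting back, collecting the curl terms into $\overrightarrow G:=\overrightarrow H+\frac{\varpi(P)}{d-\varpi(P)+|\varpi|}P\,\overrightarrow G_1$ and the $\mu_s$-coefficients into $q_s(P):=\alpha_s+\frac{\varpi(P)}{d-\varpi(P)+|\varpi|}P\,p_s(P)\in K[P]$, gives $F=(\overrightarrow\nabla\times\overrightarrow G)\cdot\overrightarrow\nabla P+\sum_s q_s(P)\mu_s$, which is the assertion; the base case $d\leq0$ is trivial since then $F\in K=K\mu_0$.

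All the manipulations are routine applications of the vector identities of Proposition \ref{pr1}; the genuinely delicate points are organizational rather than computational. One is the reduction to the weight-homogeneous situation and the choice of a homogeneous basis of $\mathcal A_{sing}(P)$, which is what makes the $\alpha_s$ true constants and keeps the induction well-founded. The other — and the step most likely to hide an error — is the verification that $d-\varpi(P)+|\varpi|$ never vanishes in the range of degrees that actually occurs, so that the splitting into a curl plus an Euler-field multiple is legitimate; this is precisely where positivity of the weights and the hypothesis $\overrightarrow G_0\neq0$ are indispensable.
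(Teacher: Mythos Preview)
Your argument is correct. The paper does not give its own proof of this proposition: it is simply quoted from Pichereau \cite{pic}, so there is no in-paper proof to compare against directly. That said, your proof is exactly the three-dimensional analogue of the argument the paper gives for Lemma \ref{lema}: write $F$ modulo the Jacobian ideal using the surjection $\overrightarrow G\mapsto\overrightarrow G\cdot\overrightarrow\nabla P$, subtract from $\overrightarrow G_0$ the multiple $\frac{1}{d-\varpi(P)+|\varpi|}\mathrm{Div}(\overrightarrow G_0)\,\vec e_\varpi$ of the Euler field to make the remainder divergence-free, apply the Poincar\'e lemma to get a curl, and recurse on $\mathrm{Div}(\overrightarrow G_0)$ whose degree has dropped by $\varpi(P)$. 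This is precisely Pichereau's method, so your proposal agrees with the intended (cited) proof. Your verification that $d-\varpi(P)+|\varpi|>0$ whenever $\overrightarrow G_0\neq 0$, and the rewriting $P\,(\overrightarrow\nabla\times\overrightarrow G_1)\cdot\overrightarrow\nabla P=(\overrightarrow\nabla\times(P\overrightarrow G_1))\cdot\overrightarrow\nabla P$, are both correct and are the only points that require care.
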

\begin{lem}\label{lem1}
$\varpi_1+\varpi_2-\varpi_3$ and $\frac{\varpi(P)}{\varpi'(\widetilde{P})}\left(|\varpi'|-\frac{\varpi'(\widetilde{P})}{r+2}\right)$ have the same sign.
\end{lem}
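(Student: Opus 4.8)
The plan is to show that, apart from a harmless degenerate case, the right-hand expression is in fact \emph{equal} to $\varpi_1+\varpi_2-\varpi_3$, so that the two quantities trivially have the same sign. First I would record the elementary consequences of the hypotheses. Since $P=\widetilde{P}+\frac{1}{r+2}z^{r+2}$ is weight homogeneous for the weights $\varpi_1,\varpi_2,\varpi_3$, the $z$-monomial forces $\varpi(P)=(r+2)\varpi_3$, and gathering the monomials of $P$ that do not involve $z$ shows that $\widetilde{P}$ (which is nonzero, else $P$ would not have an isolated singularity) is weight homogeneous of degree $\varpi(P)$ for the weights $(\varpi_1,\varpi_2)$ on $K[x,y]$. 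By hypothesis $\widetilde{P}$ is \emph{also} weight homogeneous of degree $\varpi'(\widetilde{P})$ for the weights $(\varpi_1',\varpi_2')$. I would also note that the expression $\frac{\varpi(P)}{\varpi'(\widetilde{P})}\left(|\varpi'|-\frac{\varpi'(\widetilde{P})}{r+2}\right)$ is invariant under rescaling $(\varpi_1',\varpi_2')$, so I am free to renormalise it at will.

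The key step is that the two weight systems on $\widetilde{P}$ are proportional. If $\widetilde{P}$ contains two distinct monomials $x^{\alpha_1}y^{\alpha_2}$ and $x^{\beta_1}y^{\beta_2}$, then weight homogeneity for both systems gives $(\alpha_1-\beta_1,\alpha_2-\beta_2)\cdot(\varpi_1,\varpi_2)=0$ and $(\alpha_1-\beta_1,\alpha_2-\beta_2)\cdot(\varpi_1',\varpi_2')=0$; since $(\alpha_1-\beta_1,\alpha_2-\beta_2)$ is a nonzero vector of $\mathbb{R}^2$, the vectors $(\varpi_1,\varpi_2)$ and $(\varpi_1',\varpi_2')$ lie on the same line, so $(\varpi_1',\varpi_2')=c(\varpi_1,\varpi_2)$ with $c>0$ and hence $\varpi'(\widetilde{P})=c\,\varpi(P)$. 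Substituting and using $\varpi(P)=(r+2)\varpi_3$:
\[
\frac{\varpi(P)}{\varpi'(\widetilde{P})}\left(|\varpi'|-\frac{\varpi'(\widetilde{P})}{r+2}\right)
=\frac{1}{c}\left(c(\varpi_1+\varpi_2)-\frac{c\,\varpi(P)}{r+2}\right)
=\varpi_1+\varpi_2-\varpi_3 ,
\]
so the two quantities coincide, and in particular have the same sign.

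Finally I would dispose of the case in which $\widetilde{P}$ is a scalar multiple of a single monomial. A direct inspection of $\mathcal{B}_{sing}(\widetilde{P})$ shows that an isolated singularity then forces $\widetilde{P}$ to be $cxy$, $cx$, or $cy$. In each of these cases one computes both quantities directly: for $\widetilde{P}=cxy$ one has $\varpi_1+\varpi_2=\varpi(P)$ and $\varpi_1'+\varpi_2'=\varpi'(\widetilde{P})$, whence both quantities equal $(r+1)\varpi_3>0$; for $\widetilde{P}=cx$ or $cy$ a similarly short computation gives that both are again strictly positive. This settles the remaining case. The only genuinely non-routine point in the argument is the proportionality of the two weight systems; the rest is bookkeeping with the Euler relations together with a short finite case check.
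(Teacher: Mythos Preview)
Your proof is correct and rests on the same idea as the paper's: both arguments establish that the two weight systems on $\widetilde{P}$ are proportional, $(\varpi_1',\varpi_2')=c(\varpi_1,\varpi_2)$ with $c>0$, and then a direct substitution using $\varpi(P)=(r+2)\varpi_3$ shows that the second expression equals $\varpi_1+\varpi_2-\varpi_3$ (or is a positive multiple of it), giving the same sign.

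The difference is in how proportionality is obtained. The paper simply sets $\alpha=\gcd(\varpi_1,\varpi_2)$ and asserts $\varpi_i'=\varpi_i/\alpha$, implicitly using the convention that weight systems are normalized to have no common divisor; from there it computes $|\varpi'|-\varpi'(\widetilde{P})/(r+2)=\alpha^{-1}(\varpi_1+\varpi_2-\varpi_3)$ in one line. You instead \emph{prove} proportionality from first principles via the linear-algebra observation on two distinct monomial exponents, note the scaling invariance of the second expression, and then handle separately the monomial case $\widetilde{P}\in\{cxy,cx,cy\}$ where the $\varpi'$-weights are genuinely underdetermined. Your route is a bit longer but more self-contained: it does not presuppose that the $\varpi'$ appearing in the statement are the canonical normalized weights, and it closes the small gap the paper leaves in the single-monomial situation. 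The paper's route, on the other hand, is a two-line computation once the normalization convention is accepted.
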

\begin{proof}
Set $\alpha=pgcd(\varpi_1, \varpi_2).$ Then $\varpi_1'=\frac{\varpi_1}{\alpha}$ and $\varpi_2'=\frac{\varpi_2}{\alpha}.$
Note that $\varpi(P)=\varpi_3(r+2).$ From
$$\frac{\varpi_1}{\alpha}x\frac{\partial\widetilde{P}}{\partial x}+\frac{\varpi_2}{\alpha}y\frac{\partial\widetilde{P}}{\partial y}=\varpi'(\widetilde{P})\widetilde{P},$$
we obtain
$$\varpi_1x\frac{\partial{P}}{\partial x}+\varpi_2y\frac{\partial{P}}{\partial y}+\frac{\alpha\varpi'(\widetilde{P})}{r+2}z\frac{\partial{P}}{\partial z}=\alpha\varpi'(\widetilde{P}){P}.$$
Then $\varpi(P)=\alpha\varpi(\widetilde{P}),$ $\varpi_3=\frac{\alpha\varpi'(\widetilde{P})}{r+2}$ and we deduce that
$$\varpi'_1+\varpi'_2-\frac{\varpi'(\widetilde{P})}{r+2}=\frac{1}{\alpha}\left(\varpi_1+\varpi_2-\varpi_3\right).$$
\end{proof}
\begin{theo}\label{theo3}
Suppose that $\varpi_1+\varpi_2-\varpi_3\geq0.$ Then the third Poisson cohomological group associated to the GJPS given by  the polynomials $\lambda$ and $P$ as Casimir, which satisfy the conditions (\ref{cond}), is given by:
$$PH^3(\mathcal A, \pi)=K[P]\otimes\mathcal A_{sing}(P'),$$
where $P'=\widetilde{P}+\frac{1}{r+3}z^{r+3}.$
\end{theo}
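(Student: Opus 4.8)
The plan is to compute $PH^3(\mathcal A,\pi)=\mathcal A/\operatorname{Im}\delta^2$ directly from the explicit vectorial form of $\delta^2$, reducing an arbitrary element modulo $\operatorname{Im}\delta^2$ degree by degree (all the complexes of Sections 4--5 being weight homogeneous, $\operatorname{Im}\delta^2$ is a graded subspace). Since $\lambda=z$ one has $\overrightarrow\nabla\lambda\times\overrightarrow\nabla P=\overrightarrow e_3\times\overrightarrow\nabla\widetilde P=(-\widetilde P_y,\widetilde P_x,0)^t$ and $\overrightarrow\nabla P=(\widetilde P_x,\widetilde P_y,z^{r+1})^t$, so that for $\overrightarrow G=(G_1,G_2,G_3)^t$
$$\delta^2(\overrightarrow G)=-z\bigl(\widetilde P_x\partial_yG_3-\widetilde P_y\partial_xG_3\bigr)+\widetilde P_y\bigl(G_1-z\partial_zG_1\bigr)-\widetilde P_x\bigl(G_2-z\partial_zG_2\bigr)+z^{r+2}\bigl(\partial_yG_1-\partial_xG_2\bigr).$$
Since $P'$ satisfies $\overrightarrow\nabla P'=(\widetilde P_x,\widetilde P_y,z^{r+2})^t$, one first records that $\mathcal A_{sing}(P')\cong\mathcal B_{sing}(\widetilde P)\otimes_K K[z]/(z^{r+2})$, so that $\{P^i\vartheta_jz^l:\ i\ge 0,\ 0\le j\le\mu(\widetilde P)-1,\ 0\le l\le r+1\}$ is a $K$-basis of the conjectured answer $K[P]\otimes\mathcal A_{sing}(P')$; it then suffices to prove that this family \textbf{(a)} spans $\mathcal A/\operatorname{Im}\delta^2$ and \textbf{(b)} is $K$-free there.

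For \textbf{(a)} I would peel off contributions of $\operatorname{Im}\delta^2$ in stages. Taking $\overrightarrow G=(0,0,G_3)^t$ shows $z\,(\overrightarrow{\Box}G_3\cdot\overrightarrow\nabla\widetilde P)\in\operatorname{Im}\delta^2$ for all $G_3$, so by Lemma \ref{lema} applied coefficientwise in $z$ (legitimate because $\widetilde P\in K[x,y]$) one replaces the $z$-divisible part of $F$ by a combination of the $\widetilde P^i\vartheta_jz^l$ with $l\ge1$. Taking $\overrightarrow G=(G_1,G_2,0)^t$, and exploiting that $\theta:=\mathrm{id}-z\partial_z$ is invertible on $z^l\mathcal B$ for $l\ne1$ together with the surjectivity of $(G_1,G_2)\mapsto\partial_yG_1-\partial_xG_2$ onto $\mathcal A$ (two dimensional de Rham lemma), one lowers the $z$-degree of the remaining monomials below $r+2$: each descent step trades a factor $z^{r+2}=P'_z$ for a term of strictly smaller $z$-degree plus a ``byproduct'' lying in the Jacobian ideal $\langle\widetilde P_x,\widetilde P_y\rangle$, and comparing two such presentations shows $\{\overrightarrow{\Box}U\cdot\overrightarrow\nabla\widetilde P:U\in\mathcal B\}\subset\operatorname{Im}\delta^2$, so the byproduct can be reabsorbed into the $\vartheta_j$-reduction of Lemma \ref{lema}; finally one converts $\widetilde P^i$ into $P^i$ via $\widetilde P=P-\tfrac1{r+2}z^{r+2}$ and re-runs the $z$-descent, the process terminating because each graded piece is finite dimensional. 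The extraction of the powers of $P$ is carried out exactly as in Propositions \ref{p1} and \ref{p2}. The hypothesis $\varpi_1+\varpi_2-\varpi_3\ge0$, equivalently $|\varpi'|\ge\varpi'(\widetilde P)/(r+2)$ by Lemma \ref{lem1}, is precisely what guarantees that the Euler--derivation homotopies used at each step (the analogue of the operator $\tfrac1{\varpi'(F)-\varpi'(\widetilde P)+|\varpi'|}\operatorname{Div}(\cdot)\,\vec e_{\varpi'}$ of Lemma \ref{lema}, and its three dimensional counterpart attached to $P$) never require division by a quantity that vanishes; outside this range the descent stalls and $PH^3$ is strictly larger, which is why the theorem is stated only for $\varpi_1+\varpi_2-\varpi_3\ge0$.

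For \textbf{(b)} I would compare Poincar\'e series: all the relevant complexes and isomorphisms being weight homogeneous, the Koszul exactness for $P$ and the exact sequences already used for the homology give $\dim_K(\operatorname{Im}\delta^2\cap\mathcal A_i)$ in each degree $i$, and comparison with the elementary Poincar\'e series of $K[P]\otimes\mathcal A_{sing}(P')$, together with surjectivity from \textbf{(a)}, forces equality in every degree, hence freeness. (Alternatively one argues directly: apply $\overrightarrow{\Box}$- and curl-type operators to a putative relation $\sum c_{i,j,l}P^i\vartheta_jz^l\in\operatorname{Im}\delta^2$, invoke injectivity of the Koszul and de Rham differentials attached to $\widetilde P$ to peel off the $\vartheta_j$'s, and kill the $K[P]$-coefficients one weight at a time with Lemma \ref{reg}.) The step I expect to be the real obstacle is \textbf{(a)}: organizing the simultaneous bookkeeping of the descent in $z$-degree modulo $r+2$ and of the extraction of powers of $P$ so that the induction closes, and pinning down exactly where the inequality $\varpi_1+\varpi_2-\varpi_3\ge0$ is the hypothesis that keeps all the intervening homotopies polynomial.
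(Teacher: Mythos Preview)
Your spanning argument \textbf{(a)} is essentially the paper's: reduce first via the ``$\overrightarrow\Box\cdot\overrightarrow\nabla\widetilde P$'' lemma in $\mathcal B$ (the paper's Proposition~\ref{p1}), then pull out powers of $P$ via Pichereau's Proposition~\ref{p2} whenever a factor $\lambda^2$ is available (since $\delta^2(-\lambda\overrightarrow G)=\lambda^2\overrightarrow\nabla P\cdot(\overrightarrow\nabla\times\overrightarrow G)$), and finally swap $\widetilde P^i$ for $P^i$ using $\widetilde P=P-\tfrac1{r+2}\lambda^{r+2}$. Where your sketch is thinner than it needs to be is the last trim, from $0\le l\le r+2$ down to $0\le l\le r+1$: your observation that $\theta=\mathrm{id}-z\partial_z$ is invertible on $z^l\mathcal B$ for $l\ne 1$ is correct, but the excluded case $l=1$ is exactly the one you need. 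The paper handles this with one explicit vector, $\overrightarrow V_k=(\varpi_2'y\vartheta_k,-\varpi_1'x\vartheta_k,0)^t$, for which
\[
\delta^2(\overrightarrow V_k)=\Bigl(\varpi'(\vartheta_k)+|\varpi'|-\tfrac{\varpi'(\widetilde P)}{r+2}\Bigr)\lambda^{r+2}\vartheta_k+\varpi'(\widetilde P)\,P\,\vartheta_k,
\]
and it is precisely here that the hypothesis $\varpi_1+\varpi_2-\varpi_3\ge 0$ (equivalently $|\varpi'|\ge\varpi'(\widetilde P)/(r+2)$, Lemma~\ref{lem1}) makes the scalar in front of $\lambda^{r+2}\vartheta_k$ nonzero. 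So your intuition about where the weight condition enters is correct, but you should exhibit this vector rather than invoke an unspecified homotopy.

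The genuine gap is your \textbf{(b)}. Your primary plan, comparing Poincar\'e series, requires $\dim_K(\operatorname{Im}\delta^2)_i$, hence $\dim_K(\ker\delta^2)_i$. But $\ker\delta^2/\operatorname{Im}\delta^1=PH^2(\mathcal A,\pi)$, which the paper explicitly does \emph{not} compute (``except the second group which seems more complicated to obtain''); and you cannot borrow $\ker\delta^1$ either, since the paper's computation of $PH^1$ already \emph{uses} the present theorem. The exact sequences from Section~5 determine the homology series, but in the absence of Poincar\'e duality---which is the whole point here---they say nothing about $\operatorname{Im}\delta^2$. Your parenthetical alternative is in the right direction but is not yet an argument. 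What the paper actually does is a descent on the minimal $P$-exponent $j_0$ in a hypothetical relation $\delta^2(\overrightarrow F)=\sum_{i,j\ge j_0,k}\alpha_{i,j,k}\lambda^iP^j\vartheta_k$: first $j_0\ge 1$ by reducing modulo $(\widetilde P_x,\widetilde P_y,\lambda^{r+2})$; then, rewriting both sides as $\overrightarrow\nabla P\cdot(\cdots)$ via Euler and invoking Koszul exactness for $P$, one takes $\overrightarrow\nabla\lambda\cdot(-)$ and $\operatorname{Div}(-)$ of the resulting identity and combines them to produce a \emph{new} relation of the same shape with $j_0$ replaced by $j_0-1$, the nonzero $\alpha_{i,j_0,k}$ surviving because their multipliers $j\varpi(P)+\varpi(\vartheta_k)+\varpi_1+\varpi_2+(i-1)\varpi_3$ are strictly positive for $j\ge 1$. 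This contradiction is the real content of freeness, and no dimension count available in the paper substitutes for it.
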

\begin{proof}
Let $F$ be a weight homogeneous element of $\mathcal A.$ From the previous proposition \ref{p1}, there exist $Q\in\mathcal A$ and constants $\alpha_{i,j,k}\in K$ such that:
\begin{equation}
F=\overrightarrow{\nabla}Q\cdot\left(\overrightarrow{\nabla}\lambda\times\overrightarrow{\nabla}P\right)+\displaystyle{\sum_{i, j\in \mathbb N}}\displaystyle{\sum_{k=0}^{\mu(\widetilde{P})-1}}\alpha_{i,j,k}\widetilde{P}^i\lambda^j\vartheta_k.
\end{equation}
But one can observe that $\overrightarrow{\nabla}Q\cdot\left(\overrightarrow{\nabla}\lambda\times\overrightarrow{\nabla}P\right)=\delta^2(-\overrightarrow{\nabla}Q).$ Then
$$\mathcal A=Im\delta^2+\displaystyle{\sum_{i, j\in \mathbb N}}\displaystyle{\sum_{k=0}^{\mu(\widetilde{P})-1}}K\widetilde{P}^i\lambda^j\vartheta_k.$$
Now fix $j\geq 2.$ Using the Proposition \ref{p2}, there exists $\overrightarrow{G}\in\mathcal A^3$ such that:
$$\widetilde{P}\lambda^{j-2}\vartheta_k\in\overrightarrow{\nabla}P\cdot\left(\overrightarrow{\nabla}\times\overrightarrow{G}\right)+\displaystyle{\sum_{l\in \mathbb N}}\displaystyle{\sum_{s=0}^{\mu({P})-1}}K{P}^l\mu_s.$$
Since $\delta^2(-\lambda\overrightarrow{G})=\lambda^2\overrightarrow{\nabla}P\cdot\left(\overrightarrow{\nabla}\times\overrightarrow{G}\right),$ we get:
$$\widetilde{P}\lambda^{j}\vartheta_k\in Im\delta^2+ \displaystyle{\sum_{l\in \mathbb N}}\displaystyle{\sum_{s=0}^{\mu({P})-1}}K\lambda^2{P}^l\mu_s.$$
Therefore
$$\mathcal A=Im\delta^2+\displaystyle{\sum_{l\in \mathbb N}}\displaystyle{\sum_{s=0}^{\mu({P})-1}}K\lambda^2{P}^l\mu_s+ \displaystyle{\sum_{i\in \mathbb N}}\displaystyle{\sum_{k=0}^{\mu(\widetilde{P})-1}}K\widetilde{P}^i\vartheta_k+ \displaystyle{\sum_{j\in \mathbb N}}\displaystyle{\sum_{k=0}^{\mu(\widetilde{P})-1}}K\widetilde{P}^j\lambda\vartheta_k.$$
On the other hand, since $\widetilde{P}=P-\frac{1}{r+2}\lambda^{r+2},$ there exists $Q_i\in\mathcal A$ such that $\widetilde{P}^i=P^i+\lambda^{r+2}Q_i.$ Then $\widetilde{P}^i\vartheta_k=P^i\vartheta_k+\lambda^{r+2}Q_i\vartheta_k.$\\
But according to Proposition \ref{p2}, there exists $\overrightarrow{G}_i\in\mathcal A^3$ such that
$$\lambda^rQ_i\vartheta_k\in\overrightarrow{\nabla}P\cdot\left(\overrightarrow{\nabla}\times\overrightarrow{G}\right)+\displaystyle{\sum_{l\in \mathbb N}}\displaystyle{\sum_{s=0}^{\mu({P})-1}}K{P}^l\mu_s,$$
and we have
$$\lambda^{r+2}Q_i\vartheta_k\in Im\delta^2+\displaystyle{\sum_{l\in \mathbb N}}\displaystyle{\sum_{s=0}^{\mu({P})-1}}K\lambda^2{P}^l\mu_s.$$
Therefore
$$\widetilde{P}^i\vartheta_k\in Im\delta^2+P^i\vartheta_k+\displaystyle{\sum_{l\in \mathbb N}}\displaystyle{\sum_{s=0}^{\mu({P})-1}}K\lambda^2{P}^l\mu_s.$$
In the same way, we get
$$\lambda\widetilde{P}^i\vartheta_k\in Im\delta^2+\lambda P^i\vartheta_k+\displaystyle{\sum_{l\in \mathbb N}}\displaystyle{\sum_{s=0}^{\mu({P})-1}}K\lambda^2{P}^l\mu_s.$$
Then
$$\mathcal A=Im\delta^2+\displaystyle{\sum_{s=0}^{\mu({P})-1}}K[P]\lambda^2\mu_s+
\displaystyle{\sum_{k=0}^{\mu(\widetilde{P})-1}}K[P]\lambda\vartheta_k+\displaystyle{\sum_{k=0}^{\mu(\widetilde{P})-1}}K[P]\vartheta_k.$$
From the canonical isomorphism of $K$-vector spaces
$$\mathcal A_{sing}(P)=\frac{K[x, y, z]}{\left(\frac{\partial\widetilde{P}}{\partial x}, \frac{\partial\widetilde{P}}{\partial y}, z^{r+1}\right)}\cong\left(K[x, y, z]/(z^{r+1})\right)/\left(\frac{\partial\widetilde{P}}{\partial x}, \frac{\partial\widetilde{P}}{\partial y}\right),$$
and according to the fact that $\widetilde{P}\in K[x, y],$ we suppose that $\{\mu_s\}=\{\vartheta_k, \lambda\vartheta_k,\cdots, \lambda^r\vartheta_k\}.$
Therefore
$$\mathcal A=Im\delta^2+
\displaystyle{\sum_{i=0}^{r+2}}\displaystyle{\sum_{k=0}^{\mu(\widetilde{P})-1}}K[P]\lambda^i\vartheta_k.$$
Now set $\overrightarrow{V}_k=(\varpi'_2y\vartheta_k, -\varpi'_1x\vartheta_k, 0)^t.$ We have:
$$\begin{array}{rcl}
\delta^2(\overrightarrow{V}_k)&=&-\lambda\overrightarrow{\nabla}P\cdot\left(\overrightarrow{\nabla}\times\overrightarrow{V}_k\right)-\overrightarrow{V}_k\cdot
\left(\overrightarrow{\nabla}\lambda\times\overrightarrow{\nabla}P\right)\\
&=&(|\varpi'|+\varpi'(\vartheta_k))\lambda^{r+2}\vartheta_k+\varpi'(\widetilde{P})\widetilde{P}\vartheta_k\\
&=&(\varpi'(\vartheta_k)+|\varpi'|-\frac{\varpi'(\widetilde{P})}{r+2})\lambda^{r+2}\vartheta_k+\varpi'(\widetilde{P})P\vartheta_k.
\end{array}$$
Then
\begin{equation}\label{dcomp1}
\mathcal A=Im\delta^2+
\displaystyle{\sum_{i=0}^{r+1}}\displaystyle{\sum_{k=0}^{\mu(\widetilde{P})-1}}K[P]\lambda^i\vartheta_k.
\end{equation}
Let us now prove that the sum (\ref{dcomp1}) is a direct one. First of all note that
\begin{equation}\label{eq0}
\frac{\mathcal A}{\left(\frac{\partial\widetilde{P}}{\partial x}, \frac{\partial\widetilde{P}}{\partial y}, \lambda^{r+2}\right)}=
\displaystyle{\bigoplus_{i=0}^{r+1}\bigoplus_{k=0}^{\mu(\widetilde{P})-1}}K\lambda^i\vartheta_k.
\end{equation}

Let us suppose the contrary. Consider $j_0$ the smallest integer such that there exists an equation of the form:
\begin{equation}\label{eq1}
\delta^2(\overrightarrow{F})=\displaystyle{\sum_{i=0}^{r+1}}\displaystyle{\sum_{j\geq j_0}}\displaystyle{\sum_{k=0}^{\mu(\widetilde{P})-1}}\alpha_{i,j,k}\lambda^iP^j\vartheta_k,
\end{equation}
where $\overrightarrow{F}\in\mathcal A^3,$ $\alpha_{i, j, k}\in K$ and $\alpha_{i_0, j_0, k_0}\neq 0,$ for at least one $i_0\in \mathbb N$ and for some $k_0\in\{0,\cdots, \mu(\widetilde{P})-1\}.$\\
Suppose that $j_0=0.$ Then\\
$$\begin{array}{rcl}
\displaystyle{\sum_{i=0}^{r+1}}\displaystyle{\sum_{k=0}^{\mu(\widetilde{P})-1}}\alpha_{i,0,k}\lambda^i\vartheta_k&=&-\displaystyle{\sum_{i=0}^{r+1}}\displaystyle{\sum_{j\in\mathbb N^{\star}}}\displaystyle{\sum_{k=0}^{\mu(\widetilde{P})-1}}\alpha_{i,j,k}\lambda^iP^j\vartheta_k\\
&&-\lambda\overrightarrow{\nabla}P\cdot\left(\overrightarrow{\nabla}\times\overrightarrow{F}\right)-\overrightarrow{F}\cdot
\left(\overrightarrow{\nabla}\lambda\times\overrightarrow{\nabla}P\right)\\
&\in&\left(\frac{\partial\widetilde{P}}{\partial x}, \frac{\partial\widetilde{P}}{\partial y}, \lambda^{r+2}\right).
\end{array}$$
Therefore, according to (\ref{eq0}), $\alpha_{i,0, k}=0,$ for all $i\in\{0,\cdots, r+1\}$ and all $k\in\{0,\cdots, \mu(\widetilde{P})-1\}.$ That is a contradiction with the hypothesis. Then $j_0\geq 1,$ and using the Euler formula, we can write:
$$\displaystyle{\sum_{i=0}^{r+1}}\displaystyle{\sum_{j\geq j_0}}\displaystyle{\sum_{k=0}^{\mu(\widetilde{P})-1}}\frac{\alpha_{i,j,k}}{\varpi(P)}\lambda^iP^{j-1}\vartheta_k\overrightarrow{\nabla}P\cdot\vec{e}_{\varpi}=
\overrightarrow{\nabla}P\cdot\left(-\lambda\overrightarrow{\nabla}\times\overrightarrow{F}+\overrightarrow{\nabla}\lambda\times\overrightarrow{F}
\right).$$
According to the exactness of the Koszul complex associated with $P,$ there exists $\overrightarrow{H}\in\mathcal A^3$ such that
\begin{equation}\label{eq2}
\displaystyle{\sum_{i=0}^{r+1}}\displaystyle{\sum_{j\geq j_0}}\displaystyle{\sum_{k=0}^{\mu(\widetilde{P})-1}}\frac{\alpha_{i,j,k}}{\varpi(P)}\lambda^iP^{j-1}\vartheta_k\vec{e}_{\varpi}=
\left(-\lambda\overrightarrow{\nabla}\times\overrightarrow{F}+\overrightarrow{\nabla}\lambda\times\overrightarrow{F}
\right)+\overrightarrow{H}\times\overrightarrow{\nabla}P.
\end{equation}
Set $\overrightarrow{H}=\lambda\overrightarrow{G}+\overrightarrow{L},$ where $\overrightarrow{G}\in\mathcal A^3$ and $\overrightarrow{L}\in\mathcal B^3=K[x,y]^3.$ Then if we consider the scalar product of left and right parts of the equation (\ref{eq2}) by $\overrightarrow{\nabla}\lambda,$ we get:
$$\displaystyle{\sum_{i=0}^{r+1}}\displaystyle{\sum_{j\geq j_0}}\displaystyle{\sum_{k=0}^{\mu(\widetilde{P})-1}}\frac{\varpi_3\alpha_{i,j,k}}{\varpi(P)}\lambda^{i+1}P^{j-1}\vartheta_k=
-\lambda\overrightarrow{\nabla}\lambda\cdot\left(\overrightarrow{\nabla}\times\overrightarrow{F}\right)-\lambda\overrightarrow{G}\cdot
\left(\overrightarrow{\nabla}\lambda\times\overrightarrow{\nabla}P\right)-\overrightarrow{L}\cdot
\left(\overrightarrow{\nabla}\lambda\times\overrightarrow{\nabla}P\right).$$
Therefore $\overrightarrow{L}\cdot
\left(\overrightarrow{\nabla}\lambda\times\overrightarrow{\nabla}P\right)\in(Z)\cap K[x,y].$ Then
\begin{equation}\label{eq3}
\overrightarrow{L}\cdot
\left(\overrightarrow{\nabla}\lambda\times\overrightarrow{\nabla}P\right)=0,
\end{equation}
and
\begin{equation}\label{eq4}
\displaystyle{\sum_{i=0}^{r+1}}\displaystyle{\sum_{j\geq j_0}}\displaystyle{\sum_{k=0}^{\mu(\widetilde{P})-1}}\frac{\varpi_3\alpha_{i,j,k}}{\varpi(P)}\lambda^{i}P^{j-1}\vartheta_k=
-\overrightarrow{\nabla}\lambda\cdot\left(\overrightarrow{\nabla}\times\overrightarrow{F}\right)-\overrightarrow{G}\cdot
\left(\overrightarrow{\nabla}\lambda\times\overrightarrow{\nabla}P\right).
\end{equation}
Suppose that $\overrightarrow{L}=(L_1, L_2, L_3)^t$ and set $\overrightarrow{L}'=(L_1, L_2)^t\in\mathcal B^2.$ Then equation (\ref{eq3}) is equivalent to $\overrightarrow{L}'\cdot\overrightarrow{\Box}\widetilde{P}=0.$ Then $\overrightarrow{L}'=L\overrightarrow{\nabla}\widetilde{P},$ $L\in\mathcal B,$ according to the exactness of the Koszul complex associated with $\widetilde{P}\in\mathcal B.$\\
Now if we compute the divergence of the left and right parts of equation (\ref{eq2}), we obtain:\\
\begin{multline}\label{eq5}
\displaystyle{\sum_{i=0}^{r+1}}\displaystyle{\sum_{j\geq j_0}}\displaystyle{\sum_{k=0}^{\mu(\widetilde{P})-1}}\frac{\alpha_{i,j,k}}{\varpi(P)}\left[i\varpi_3+(j-1)\varpi(P)+\varpi(\vartheta_k)+|\varpi|\right]
\lambda^iP^{j-1}\vartheta_k=-2\overrightarrow{\nabla}\lambda\cdot\left(\overrightarrow{\nabla}\times\overrightarrow{F}\right)\\
+\lambda\overrightarrow{\nabla}P\cdot\left(\overrightarrow{\nabla}\times\overrightarrow{G}\right)-\overrightarrow{G}\cdot
\left(\overrightarrow{\nabla}\lambda\times\overrightarrow{\nabla}P\right)+\delta^2(-\overrightarrow{\nabla}G),
\end{multline}
where $G=-\lambda^{r+1}L+L_3.$\\
From equations (\ref{eq4}) and (\ref{eq5}), we obtain:
\begin{equation}\label{eq6}
\displaystyle{\sum_{i=0}^{r+1}}\displaystyle{\sum_{j\geq j'_0}}\displaystyle{\sum_{k=0}^{\mu(\widetilde{P})-1}}\beta_{i,j,k}\lambda^{i}P^{j-1}\vartheta_k=
\delta^2(\overrightarrow{G}-\overrightarrow{\nabla}G),
\end{equation}
where $\beta_{i,j,k}=\frac{\left[j\varpi(P)+\varpi(\vartheta_k)+\varpi_1+\varpi_2+(i-1)\varpi_3\right]}{\varpi(P)}\alpha_{i,j,k}$ and $j'_0=j_0-1.$\\
\end{proof}
Then we obtain an equation of the form (\ref{eq1}), but with $j'_0<j_0.$ That is a contradiction of our hypothesis.
\begin{lem} Let $L\in\mathcal B=K[x, y].$
$\overrightarrow{\Box}L\cdot\overrightarrow{\nabla}\widetilde{P}=0$ iff $L\in K[\widetilde{P}].$
\end{lem}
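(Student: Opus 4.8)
The plan is to mimic the proof of Lemma \ref{cas}, replacing the three–variable Koszul complex of $P$ by the two–variable Koszul complex (\ref{kc1}) of $\widetilde{P}$, which is exact precisely because $\widetilde{P}$ is a weight homogeneous polynomial with an isolated singularity, so that $\frac{\partial\widetilde{P}}{\partial x},\frac{\partial\widetilde{P}}{\partial y}$ is a regular sequence in $\mathcal B$. The ``if'' direction I would dispose of first, and it is immediate: if $L=u(\widetilde{P})$ for a one–variable polynomial $u$, then $\overrightarrow{\Box}L=u'(\widetilde{P})\,\overrightarrow{\Box}\widetilde{P}$, and $\overrightarrow{\Box}\widetilde{P}\cdot\overrightarrow{\nabla}\widetilde{P}=0$ identically, hence $\overrightarrow{\Box}L\cdot\overrightarrow{\nabla}\widetilde{P}=0$.

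For the converse, I would first note that $L\mapsto\overrightarrow{\Box}L\cdot\overrightarrow{\nabla}\widetilde{P}$ is a weight homogeneous map (since $\widetilde{P}$ is weight homogeneous) and that $K[\widetilde{P}]$ is a graded subalgebra of $\mathcal B$, so it is enough to treat a weight homogeneous $L$ and argue by induction on $\varpi'(L)$; the case $\varpi'(L)\leq 0$ is trivial. For the inductive step I would rewrite the hypothesis as $\overrightarrow{\nabla}L\cdot\overrightarrow{\Box}\widetilde{P}=0$ (the two scalars differ only by a sign), and then invoke exactness of (\ref{kc1}) for $\varphi=\widetilde{P}$ at its middle term to produce $L_1\in\mathcal B$ with $\overrightarrow{\nabla}L=L_1\,\overrightarrow{\nabla}\widetilde{P}$. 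If $L_1=0$ then $\overrightarrow{\nabla}L=0$, so $L$ is a constant and lies in $K[\widetilde{P}]$; otherwise, comparing weight degrees in this identity gives $\varpi'(L)=\varpi'(L_1)+\varpi'(\widetilde{P})>\varpi'(L_1)$.

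Next I would apply the differential $d_1$ of the de Rham complex (\ref{drc1}) to the identity $\overrightarrow{\nabla}L=L_1\,\overrightarrow{\nabla}\widetilde{P}$: the left side vanishes since $d_1\circ d_0=0$, while on the right the second derivatives of $\widetilde{P}$ cancel and one is left with $-\,\overrightarrow{\Box}L_1\cdot\overrightarrow{\nabla}\widetilde{P}$. Hence $\overrightarrow{\Box}L_1\cdot\overrightarrow{\nabla}\widetilde{P}=0$ with $\varpi'(L_1)<\varpi'(L)$, and the induction hypothesis yields $L_1=v(\widetilde{P})$ for some one–variable polynomial $v$. Finally, since $K$ has characteristic zero I may choose an antiderivative $u$ of $v$; then $\overrightarrow{\nabla}\bigl(L-u(\widetilde{P})\bigr)=v(\widetilde{P})\overrightarrow{\nabla}\widetilde{P}-v(\widetilde{P})\overrightarrow{\nabla}\widetilde{P}=0$, so $L-u(\widetilde{P})\in K$ and $L\in K[\widetilde{P}]$.

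I do not expect a genuine obstacle: the statement is the two–variable counterpart of Lemma \ref{cas} and the argument is a direct transcription. The only points requiring mild care are the sign bookkeeping relating $\overrightarrow{\Box}L\cdot\overrightarrow{\nabla}\widetilde{P}$, $\overrightarrow{\nabla}L\cdot\overrightarrow{\Box}\widetilde{P}$ and $d_1\bigl(L_1\overrightarrow{\nabla}\widetilde{P}\bigr)$, verifying that the weight degree strictly decreases at each step (with the harmless exceptional case $L_1=0$ treated separately), and recalling — exactly as in Lemma \ref{cas} — that it is the isolated–singularity hypothesis on $\widetilde{P}$ that makes the Koszul complex used here exact.
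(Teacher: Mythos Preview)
Your proof is correct and follows essentially the same approach as the paper: both use the exactness of the two-variable Koszul complex of $\widetilde{P}$ to get $\overrightarrow{\nabla}L=L_1\overrightarrow{\nabla}\widetilde{P}$, observe that $L_1$ again satisfies the hypothesis with strictly smaller weight degree, and then integrate back using the de Rham complex. The only cosmetic difference is that the paper builds a descending sequence $(L_n)$ and then backtracks, whereas you phrase the same recursion as an induction and also make the easy ``if'' direction and the antiderivative step explicit.
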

\begin{proof}
Let $L$ be a weight homogeneous element of $\mathcal B$ such that $\overrightarrow{\Box}L\cdot\overrightarrow{\nabla}\widetilde{P}=0.$ Then from the exactness of the Koszul complex associated to $\widetilde{P},$ we deduce that there exists $L_1\in\mathcal B$ such that $\overrightarrow{\Box}L=L_1\overrightarrow{\nabla}\widetilde{P}.$ We have $\varpi'(L_1)<\varpi'(L),$ and $0=\overrightarrow{\Box}L\cdot\overrightarrow{\nabla}\widetilde{P}=\overrightarrow{\Box}L_1\cdot\overrightarrow{\nabla}\widetilde{P}.$ Therefore when we continue in this way, we get a sequence of elements $(L_n)_{n\in\mathbb N},$ $L=L_0,$ of $\mathcal B$ such that $\overrightarrow{\Box}L_n\cdot\overrightarrow{\nabla}\widetilde{P}=0$ and $\varpi'(L_{n+1})<\varpi'(L_n).$ Then there exists $m\in\mathbb N$ such that $\overrightarrow{\Box}L_m=0.$ Hence $L_m=c_m\in K.$ Therefore, using the exactness of the de Rham complex, there exists $c_{m-1}\in K$ such that $L_{m-1}=c_m\widetilde{P}+c_{m-1}.$ We can continue this procedure and at end we get $L\in K[\widetilde{P}].$
\end{proof}
\begin{lem}\label{lem2}
For any $s\in\mathbb N,$ there exists $F_s\in A$ such that $\widetilde{P}^s\left(\overrightarrow{\nabla}\times\overrightarrow{\nabla}P\right)={P}^s
\left(\overrightarrow{\nabla}\times\overrightarrow{\nabla}P\right)+\delta^0(F_s).$
\end{lem}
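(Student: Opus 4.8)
The plan is to produce $F_s$ explicitly, as a polynomial in the two ``intrinsic'' quantities $P$ and $\lambda=z$: the key observation is that the hypotheses $\lambda=z$ and $P=\widetilde P+\tfrac1{r+2}z^{r+2}$ force $\widetilde P$, and hence $\widetilde P^s$, $P^s$ and the vector $\overrightarrow\nabla\lambda\times\overrightarrow\nabla P$ occurring in the statement, to live inside the subring $K[P,z]\subseteq\mathcal A$, where $\delta^0$ is easy to control.

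First I would record how $\delta^0$ acts on such elements. If $F=H(P,z)$ for a polynomial $H$ in two variables, the chain rule gives $\overrightarrow\nabla F=H_P\,\overrightarrow\nabla P+H_z\,\overrightarrow\nabla\lambda$ (with $H_P$, $H_z$ the partial derivatives of $H$ evaluated at $(P,z)$, and $\overrightarrow\nabla\lambda=\overrightarrow\nabla z$); substituting into $\delta^0(F)=-\lambda\,\overrightarrow\nabla F\times\overrightarrow\nabla P$ and using $\overrightarrow\nabla P\times\overrightarrow\nabla P=0$ yields
$$\delta^0\big(H(P,z)\big)=-\,z\,H_z(P,z)\,\big(\overrightarrow\nabla\lambda\times\overrightarrow\nabla P\big).$$
On the other side, since $P-\widetilde P=\tfrac1{r+2}z^{r+2}$, one has $\widetilde P^s-P^s=-(P-\widetilde P)\sum_{i=0}^{s-1}\widetilde P^{\,s-1-i}P^i=-\tfrac1{r+2}z^{r+2}c_s$, where $c_s:=\sum_{i=0}^{s-1}\widetilde P^{\,s-1-i}P^i$ is again an element of $K[P,z]$. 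Comparing the two expressions, the lemma reduces to finding a polynomial $H$ with $-z\,H_z(P,z)=-\tfrac1{r+2}z^{r+2}c_s$, that is $H_z(P,z)=\tfrac1{r+2}z^{r+1}c_s(P,z)$.

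Now $z^{r+1}c_s$ is a genuine polynomial in the two variables $P$ and $z$, so its formal primitive in the second variable, $H(P,z):=\tfrac1{r+2}\int_0^z t^{r+1}c_s(P,t)\,dt$, is again a polynomial; setting $F_s:=H(P,z)\in K[P,z]\subseteq\mathcal A$ then gives $\delta^0(F_s)=(\widetilde P^s-P^s)\big(\overrightarrow\nabla\lambda\times\overrightarrow\nabla P\big)$, which is exactly the asserted identity. For $s=0$ the sum $c_s$ is empty and $F_0=0$ works; for $s=1$ one finds $F_1=\tfrac1{(r+2)^2}z^{r+2}$ (equivalently $-\tfrac1{r+2}\widetilde P$, the two differing by $\tfrac1{r+2}P\in\ker\delta^0$). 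If one also wants $F_s$ weight homogeneous, note that $c_s$ is weight homogeneous and that, by the relation $\varpi(P)=(r+2)\varpi_3$ from the proof of Lemma~\ref{lem1}, integration against $t^{r+1}\,dt$ raises the weight precisely by $\varpi(P)$. I do not see any serious obstacle here beyond recognising that the hypotheses make $(P,\lambda)$ a convenient pair of coordinates; a more computational alternative would be to rewrite the claim as the vector identity $\big[(\widetilde P^s-P^s)\,\overrightarrow\nabla\lambda+\lambda\,\overrightarrow\nabla F_s\big]\times\overrightarrow\nabla P=0$, invoke the exactness of the Koszul complex attached to $P$ to turn it into a system for the components of $\overrightarrow\nabla F_s$, and then integrate, but the $K[P,z]$ construction above is shorter and self-contained.
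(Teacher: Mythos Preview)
Your argument is correct and takes a genuinely different route from the paper. The paper proceeds by induction on $s$: it computes $\delta^0(\widetilde P^{\,s+1})$ directly, uses $\widetilde P=P-\tfrac{1}{r+2}\lambda^{r+2}$ to rewrite the result as $(s+1)(r+2)\big(\widetilde P^{\,s+1}-P\widetilde P^{\,s}\big)(\overrightarrow\nabla\lambda\times\overrightarrow\nabla P)$, and then invokes the induction hypothesis on $\widetilde P^{\,s}$ to obtain the recursion $F_{s+1}=PF_s+\tfrac{1}{(s+1)(r+2)}\widetilde P^{\,s+1}$. Your approach instead recognises from the outset that the hypotheses force everything into the subring $K[P,z]$, where $\delta^0$ acts by the simple formula $\delta^0(H(P,z))=-zH_z(P,z)\,(\overrightarrow\nabla\lambda\times\overrightarrow\nabla P)$; the lemma then reduces to a one-variable antiderivative in $z$, which you write down explicitly. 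What your approach buys is a closed-form $F_s$ in one stroke and a transparent reason why such an $F_s$ exists (namely that $\widetilde P^{\,s}-P^{\,s}$ is divisible by $z^{r+2}$, hence by $z$, so the factor of $z$ in $\delta^0$ is no obstruction). What the paper's induction buys is a tidy recursive description of $F_s$ that stays within the polynomial algebra without ever invoking the two-variable viewpoint; it also makes the weight-homogeneity of $F_s$ immediate. The two constructions differ only by elements of $K[P]=\ker\delta^0$, as you already noted for $s=1$.
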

\begin{proof}
We will prove by induction on $s.$ For $s=0,$ choose $F_0=0.$ Let us suppose the result true for some $s\in\mathbb N.$ We have:
$$\begin{array}{rcl}
\delta^0(\widetilde{P}^{s+1})&=&\lambda(\overrightarrow{\nabla}\widetilde{P}^{s+1}\times\overrightarrow{\nabla}P)\\
&=&(s+1)\lambda\widetilde{P}^s(\overrightarrow{\nabla}\widetilde{P}\times\overrightarrow{\nabla}P)\\
&=&(s+1)\lambda\widetilde{P}^s\left(\overrightarrow{\nabla}({P}-\frac{1}{r+2}\lambda^{r+2})\times\overrightarrow{\nabla}P\right)\\
&=&-(s+1)\lambda^{r+2}\widetilde{P}^s(\overrightarrow{\nabla}\lambda\times\overrightarrow{\nabla}P)\\
&=&-(s+1)(r+2)(P-\widetilde{P})\widetilde{P}^s(\overrightarrow{\nabla}\lambda\times\overrightarrow{\nabla}P)\\
&=&-(s+1)(r+2)P\widetilde{P}^s(\overrightarrow{\nabla}\lambda\times\overrightarrow{\nabla}P)+
(s+1)(r+2)\widetilde{P}^{s+1}(\overrightarrow{\nabla}\lambda\times\overrightarrow{\nabla}P)\\
\end{array}$$
But $$\widetilde{P}^s\left(\overrightarrow{\nabla}\times\overrightarrow{\nabla}P\right)={P}^s
\left(\overrightarrow{\nabla}\times\overrightarrow{\nabla}P\right)+\delta^0(F_s).$$
Then
$$\widetilde{P}^{s+1}\left(\overrightarrow{\nabla}\times\overrightarrow{\nabla}P\right)={P}^{s+1}
\left(\overrightarrow{\nabla}\times\overrightarrow{\nabla}P\right)+\delta^0(F_{s+1}),$$
where $F_{s+1}=PF_s+\frac{1}{(s+1)(r+2)}\widetilde{P}^{s+1}.$
\end{proof}
\begin{theo}
$PH^1(\mathcal A, \pi)=\left\{{\begin{array}{l}
K[P](\overrightarrow{\nabla}\lambda\times\overrightarrow{\nabla}P) \ \ if\ \  \varpi(P)\neq\varpi_1+\varpi_2\\
K[P](\overrightarrow{\nabla}\lambda\times\overrightarrow{\nabla}P)\oplus K[P]\vec e_{\varpi} \ \ if \ \ \varpi(P)=\varpi_1+\varpi_2
\end{array}}\right.$
\end{theo}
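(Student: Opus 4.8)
The plan is to compute $PH^1(\mathcal A,\pi)=\ker\delta^1/\mathrm{im}\,\delta^0$ directly. Throughout one uses $\lambda=z$ (so $\overrightarrow{\nabla}P=(\partial_x\widetilde P,\partial_y\widetilde P,z^{r+1})^t$ and $\overrightarrow{\nabla}\lambda\times\overrightarrow{\nabla}P=(-\partial_y\widetilde P,\partial_x\widetilde P,0)^t$) together with the identity $\delta^1(\overrightarrow{G})=\partial_2(\overrightarrow{G})+\overrightarrow{G}\times(\overrightarrow{\nabla}\lambda\times\overrightarrow{\nabla}P)$, which follows from the formulas for $\delta^1,\partial_2$ by the Leibniz rule and the BAC--CAB rule; since all the differentials are weight homogeneous it suffices to treat weight--homogeneous cocycles. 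First I would verify that both families are cocycles: $\delta^1\big(P^k(\overrightarrow{\nabla}\lambda\times\overrightarrow{\nabla}P)\big)=0$ for all $k$, because $\overrightarrow{\nabla}\lambda\times\overrightarrow{\nabla}P$ is orthogonal to $\overrightarrow{\nabla}P$ and to $\overrightarrow{\nabla}\lambda$ and $Div\big(P^k(\overrightarrow{\nabla}\lambda\times\overrightarrow{\nabla}P)\big)=0$ by Proposition~\ref{pr1}; whereas Euler's formula gives $\delta^1(P^k\vec{e}_\varpi)=(\varpi_1+\varpi_2-\varpi(P))\,zP^k\,\overrightarrow{\nabla}P$, which vanishes for all $k$ exactly when $\varpi(P)=\varpi_1+\varpi_2$ --- this is the source of the dichotomy.

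Next I would introduce the ``flux'' $\phi(\overrightarrow F)=\overrightarrow F\cdot\overrightarrow{\nabla}P$. It descends to a linear map $PH^1\to K[P]$: every coboundary $\delta^0(F')=-z\,\overrightarrow{\nabla}F'\times\overrightarrow{\nabla}P$ is orthogonal to $\overrightarrow{\nabla}P$, and for a weight--homogeneous cocycle $\overrightarrow F$, crossing $\delta^1(\overrightarrow F)=0$ with $\overrightarrow{\nabla}P$ gives $\overrightarrow{\nabla}(\overrightarrow F\cdot\overrightarrow{\nabla}P)\times\overrightarrow{\nabla}P=0$, so $\overrightarrow F\cdot\overrightarrow{\nabla}P=cP^m$ by Lemma~\ref{cas} and weight homogeneity, with $m\ge1$ when $c\neq0$. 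The flux sends $[P^k\vec{e}_\varpi]\mapsto\varpi(P)P^{k+1}$ and kills the classes $[P^k(\overrightarrow{\nabla}\lambda\times\overrightarrow{\nabla}P)]$; combined with the fact that $\mathrm{im}\,\delta^0$ vanishes on $\{z=0\}$ while $(\sum_kc_kP^k)(\overrightarrow{\nabla}\lambda\times\overrightarrow{\nabla}P)$ restricts there to $(\sum_kc_k\widetilde P^k)(-\partial_y\widetilde P,\partial_x\widetilde P)^t$, which is nonzero unless all $c_k=0$, this shows the listed classes are $K$--linearly independent and their spans are complementary.

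For surjectivity onto $PH^1$, take a weight--homogeneous cocycle $\overrightarrow F$ with $\overrightarrow F\cdot\overrightarrow{\nabla}P=cP^m$. Subtracting $\tfrac{c}{\varpi(P)}P^{m-1}\vec{e}_\varpi$ produces $\overrightarrow G$ with $\overrightarrow G\cdot\overrightarrow{\nabla}P=0$ and $\delta^1(\overrightarrow G)=c''zP^{m-1}\overrightarrow{\nabla}P$, where $c''=-\tfrac{c(\varpi_1+\varpi_2-\varpi(P))}{\varpi(P)}$. Writing $\overrightarrow G=\overrightarrow H\times\overrightarrow{\nabla}P$ (Koszul exactness), normalising $\overrightarrow H$ modulo $\mathcal A\,\overrightarrow{\nabla}P$ --- via the restriction to $z=0$ and the two--variable Koszul complex of $\widetilde P$ --- so that $H_1,H_2\in(z)$, and reducing the equation for $\delta^1(\overrightarrow G)$ modulo $z$, one obtains $\overrightarrow{\Box}(H_3|_{z=0})\cdot\overrightarrow{\nabla}\widetilde P=c''\widetilde P^{\,m-1}$. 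Since $\widetilde P^{\,m-1}=\widetilde P^{\,m-1}\vartheta_0$ has nonzero class in $\mathcal B/\{\overrightarrow{\Box}Q\cdot\overrightarrow{\nabla}\widetilde P\}$ by Lemma~\ref{lema}, necessarily $c''=0$: hence $c=0$ when $\varpi(P)\neq\varpi_1+\varpi_2$, while when $\varpi(P)=\varpi_1+\varpi_2$ we have merely peeled off a $K[P]\vec{e}_\varpi$--term. In either case it remains to treat a cocycle $\overrightarrow F=\overrightarrow H\times\overrightarrow{\nabla}P$ with $\overrightarrow F\cdot\overrightarrow{\nabla}P=0$, i.e. $\overrightarrow F=z(A_1,A_2,0)^t\times\overrightarrow{\nabla}P+H_3(\overrightarrow{\nabla}\lambda\times\overrightarrow{\nabla}P)$ subject to $F_3=z\,Div\,\overrightarrow F$, and to show it is cohomologous to a $K[P]$--multiple of $\overrightarrow{\nabla}\lambda\times\overrightarrow{\nabla}P$. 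Here one uses that the cocycles of the form $g(\overrightarrow{\nabla}\lambda\times\overrightarrow{\nabla}P)$ are exactly those with $g\in K[\widetilde P,z]=K[P,z]$, and that in the coordinates $(P,z)$ the restriction of $\delta^0$ to $K[P,z]$ is the operator $g\mapsto-z\,\partial_zg$, whose cokernel on $K[P,z]$ is $K[P]$; so the contribution of these cocycles to $PH^1$ is precisely $K[P](\overrightarrow{\nabla}\lambda\times\overrightarrow{\nabla}P)$.

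Assembling: when $\varpi(P)\neq\varpi_1+\varpi_2$ the flux vanishes identically, so $PH^1(\mathcal A,\pi)=K[P](\overrightarrow{\nabla}\lambda\times\overrightarrow{\nabla}P)$; when $\varpi(P)=\varpi_1+\varpi_2$ the flux gives a short exact sequence $0\to K[P](\overrightarrow{\nabla}\lambda\times\overrightarrow{\nabla}P)\to PH^1\to P\,K[P]\to0$, split by $P^{k+1}\mapsto\tfrac1{\varpi(P)}P^k\vec{e}_\varpi$, whence $PH^1=K[P](\overrightarrow{\nabla}\lambda\times\overrightarrow{\nabla}P)\oplus K[P]\vec{e}_\varpi$. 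I expect the main obstacle to be the step above that reduces a flux--zero cocycle modulo $\mathrm{im}\,\delta^0$ onto the $\overrightarrow{\nabla}\lambda\times\overrightarrow{\nabla}P$--line, i.e. the disposal of the ``$z(A_1,A_2,0)^t\times\overrightarrow{\nabla}P$'' part: this should require a descending induction on the $z$--adic order of $\overrightarrow H$, at each step using Poincar\'e's lemma and the Koszul complex of $\widetilde P$ to strip off the leading part, with Lemma~\ref{lema} controlling the residual obstruction --- and it is exactly here, rather than in the formal manipulations, that the special form $\lambda=z$, $P=\widetilde P+\tfrac1{r+2}z^{r+2}$ is essential, as opposed to the mere regular--sequence and isolated--singularity hypotheses.
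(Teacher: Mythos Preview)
Your overall architecture is sound and in several respects cleaner than the paper's: the ``flux'' map $\phi(\overrightarrow F)=\overrightarrow F\cdot\overrightarrow{\nabla}P$ organises the argument nicely, and your derivation of the dichotomy via a mod-$z$ reduction and Lemma~\ref{lema} is correct (the normalisation $H_1,H_2\in(z)$ is justified a posteriori: the equation $zDiv(\overrightarrow G)-G_3=c''zP^{m-1}$ at $z=0$ forces $(H_1,H_2)|_{z=0}\cdot\overrightarrow{\Box}\widetilde P=0$, whence $(H_1,H_2)|_{z=0}\in\mathcal B\,\overrightarrow{\nabla}\widetilde P$ by Koszul). The paper instead obtains the condition $c\bigl(1-\tfrac{\varpi_1+\varpi_2}{\varpi(P)}\bigr)=0$ by observing that the obstruction equals $\delta^2(-\overrightarrow G)$ and invoking Theorem~\ref{theo3} on $PH^3$; your route is more self-contained and sidesteps the hypothesis $\varpi_1+\varpi_2\ge\varpi_3$ built into that theorem.

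Where you genuinely diverge, and where your proposal remains incomplete, is the flux--zero case. You correctly identify that eliminating the ``$z(A_1,A_2,0)^t\times\overrightarrow{\nabla}P$'' part is the crux, but propose a descending induction that you do not carry out. The paper does \emph{not} induct here: writing $\overrightarrow F=\overrightarrow G\times\overrightarrow{\nabla}P$ (your $\overrightarrow H$), one has $\delta^2(\overrightarrow G)=0$, and splitting $\overrightarrow G=\lambda\overrightarrow H+\overrightarrow L$ with $\overrightarrow L\in\mathcal B^3$ gives, from the two--variable Koszul complex, $\overrightarrow L=(L\partial_x\widetilde P,L\partial_y\widetilde P,L_3)^t$ with $L_3\in K[\widetilde P]$; the remaining equation for $\overrightarrow H$ is exactly $\overrightarrow{\nabla}P\cdot(\overrightarrow{\nabla}\times\overrightarrow H)=\overrightarrow{\nabla}P\cdot\bigl(\overrightarrow{\nabla}\times(\lambda^rL\overrightarrow{\nabla}\lambda)\bigr)$, which Proposition~\ref{1pc} solves in one step as $\overrightarrow H=\lambda^rL\overrightarrow{\nabla}\lambda+\overrightarrow{\nabla}Q_1+Q_2\overrightarrow{\nabla}P$. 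Plugging back gives $\overrightarrow F=\delta^0(Q_1)+\alpha\widetilde P^s(\overrightarrow{\nabla}\lambda\times\overrightarrow{\nabla}P)$, and Lemma~\ref{lem2} then trades $\widetilde P^s$ for $P^s$ modulo $\mathrm{im}\,\delta^0$. This is the missing ingredient in your sketch: rather than an induction, one needs the single observation that $\delta^2$ annihilates the Koszul preimage $\overrightarrow G$, which unlocks Proposition~\ref{1pc} directly. Your discussion of cocycles of the form $g(\overrightarrow{\nabla}\lambda\times\overrightarrow{\nabla}P)$ with $g\in K[P,z]$ and of $\delta^0|_{K[P,z]}$ as $g\mapsto -z\partial_zg$ is correct but, without first reducing every flux--zero cocycle to that form, does not by itself finish the computation.
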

\begin{proof}
Let $\overrightarrow{F}$ be a weight homogeneous element of $Ker\delta^1.$ Then
\begin{equation}\label{equa1}
\lambda\overrightarrow{\nabla}(\overrightarrow F\cdot\overrightarrow{\nabla} P)+ \left(\lambda Div(\overrightarrow F)+\overrightarrow{F}\cdot\overrightarrow{\nabla}\lambda\right)\overrightarrow{\nabla} P=0.
\end{equation}
Therefore $\lambda\overrightarrow{\nabla}(\overrightarrow F\cdot\overrightarrow{\nabla} P)\times\overrightarrow{\nabla}P=0.$ Hence $\overrightarrow F\cdot\overrightarrow{\nabla} P\in K[P].$ Set $\overrightarrow F\cdot\overrightarrow{\nabla} P=cP^{l+1},$ $l\in\mathbb N.$\\
From the exactness of the Koszul complex associated with $P,$ we deduce that there exists $\overrightarrow{G}\in\mathcal A^3$ such that
$$\overrightarrow F=\frac{c}{\varpi(P)}P^l\vec e_{\varpi}+\overrightarrow{G}\times\overrightarrow{\nabla}P.$$
Since $\delta^1(\overrightarrow{F})=0,$ we have $c\left(1-\frac{\varpi_1+\varpi_2}{\varpi(P)}\right)=\delta^2(-\overrightarrow{G}).$ Therefore, from theorem \ref{theo3}, we have $c\left(1-\frac{\varpi_1+\varpi_2}{\varpi(P)}\right)=\delta^2(-\overrightarrow{G})=0.$\\
Set $\overrightarrow{G}=\lambda\overrightarrow{H}+\overrightarrow{L},$ $\overrightarrow{H}\in\mathcal A^3$ and $\overrightarrow{L}\in\mathcal B^3.$ Then
$$\lambda^2\overrightarrow{\nabla}P\cdot(\overrightarrow{\nabla}\times\overrightarrow{H})+\lambda
\overrightarrow{\nabla}P\cdot(\overrightarrow{\nabla}\times\overrightarrow{L})+
\overrightarrow{L}\cdot(\overrightarrow{\nabla}\lambda\times\overrightarrow{\nabla}P)=0.$$
Hence $\overrightarrow{L}\cdot(\overrightarrow{\nabla}\lambda\times\overrightarrow{\nabla}P)\in(z)\cap\mathcal B=\{0\}.$ Therefore $\overrightarrow{L}\cdot(\overrightarrow{\nabla}\lambda\times\overrightarrow{\nabla}P)=\overrightarrow{L}'\cdot\overrightarrow{\Box}\widetilde{P}=0,$ where $\overrightarrow{L}'=(L_1, L_2)^t$ if $\overrightarrow{L}=(L_1, L_2, L_3)^t.$ Then there exists $L\in\mathcal B$ such that $\overrightarrow{L}'=L\overrightarrow{\nabla}\widetilde{P}.$\\
On the other hand
$$\lambda^2\overrightarrow{\nabla}P\cdot(\overrightarrow{\nabla}\times\overrightarrow{H})=-\lambda
\overrightarrow{\nabla}P\cdot(\overrightarrow{\nabla}\times\overrightarrow{L})=-\left(\frac{\partial L_2}{\partial x}-\frac{\partial L_1}{\partial y}\right)-\overrightarrow{\Box}L_3\cdot\overrightarrow{\nabla}\widetilde{P}.$$
Hence $\overrightarrow{\Box}L_3\cdot\overrightarrow{\nabla}\widetilde{P}\in\mathcal B\cap (z)=\{0\}.$ Therefore $L_3\in K[\overrightarrow{P}].$ Set $L_3=\alpha\widetilde{P}^s.$ We obtain
$$\overrightarrow{L}=L\overrightarrow{\nabla}P+(\alpha\widetilde{P}^s-\lambda^{r+1})\overrightarrow{\nabla}\lambda,$$
and
$$\overrightarrow{\nabla}P\cdot(\overrightarrow{\nabla}\times\overrightarrow{H})=\overrightarrow{\nabla}P\cdot
\left(\overrightarrow{\nabla}\times(\lambda^rL\overrightarrow{\nabla}\lambda)\right).$$
We can therefore deduce that there exist $Q_1,\ Q_2\in\mathcal A$ such that $$\overrightarrow{H}=\lambda^rL\overrightarrow{\nabla}\lambda+\overrightarrow{\nabla}Q_1+Q_2\overrightarrow{\nabla}P.$$
Finally, we obtain
$$\begin{array}{rcl}\overrightarrow{F}&=&\frac{c}{\varpi(P)}P^l\vec e_{\varpi}+\lambda(\overrightarrow{\nabla}Q_1\times\overrightarrow{\nabla}P)+
\alpha\widetilde{P}^s(\overrightarrow{\nabla}\lambda\times\overrightarrow{\nabla}P)\\
&=&\frac{c}{\varpi(P)}P^l\vec e_{\varpi}+\delta^0(Q_1)+
\alpha\widetilde{P}^s(\overrightarrow{\nabla}\lambda\times\overrightarrow{\nabla}P).
\end{array}$$
From the lemma \ref{lem2}, there exists $F_s\in\mathcal A$ such that $\alpha\widetilde{P}^s(\overrightarrow{\nabla}\lambda\times\overrightarrow{\nabla}P)=
\alpha{P}^s(\overrightarrow{\nabla}\lambda\times\overrightarrow{\nabla}P)+\delta^0(F_s).$ Then
$$\overrightarrow{F}=\frac{c}{\varpi(P)}P^l\vec e_{\varpi}+\alpha{P}^s(\overrightarrow{\nabla}\lambda\times\overrightarrow{\nabla}P)+\delta^0(Q_1+F_s),$$
with the condition that $c\left(1-\frac{\varpi_1+\varpi_2}{\varpi(P)}\right)=0.$
Since $\delta^1(\vec e_{\varpi})=(\varpi_1+\varpi_2-\varpi(P))\lambda\overrightarrow{\nabla}P,$ we can conclude that
\begin{equation}\label{equa2}
Ker\delta^1=\left\{{\begin{array}{l}
K[P](\overrightarrow{\nabla}\lambda\times\overrightarrow{\nabla}P)+Im\delta^0 \ \ if\ \  \varpi(P)\neq\varpi_1+\varpi_2\\
K[P](\overrightarrow{\nabla}\lambda\times\overrightarrow{\nabla}P)+ K[P]\vec e_{\varpi}+Im\delta^0\ \ if \ \ \varpi(P)=\varpi_1+\varpi_2.
\end{array}}\right.
\end{equation}
Since the sum in (\ref{equa2}) is direct, the result follows. Let us prove it. Consider $F,$ a weight homogeneous element of $\mathcal A,$ and $\alpha, \ \beta\in K, $ $l,\ s\in\mathbb N,$ with the condition that $\alpha=0$ if $\varpi(P)=\varpi_1+\varpi_2,$ such that
$$\alpha P^l\vec e_{\varpi}+\beta P^s\overrightarrow{\nabla}\lambda\times\overrightarrow{\nabla}P=\delta^0(F)=\lambda\overrightarrow{\nabla}\lambda\times\overrightarrow{\nabla}P.$$
Then $\alpha P^l\vec e_{\varpi}\cdot\overrightarrow{\nabla}P=0,$ and therefore $\alpha=0.$ On the other hand there exists $\widehat{P}_s\in\mathcal A$ such that $P^s=\widetilde{P}^s+\lambda^{r+2}\widehat{P}_s.$ Hence \\
$$\lambda(\overrightarrow{\nabla}\lambda\times\overrightarrow{\nabla}P)=\beta P^s(\overrightarrow{\nabla}\lambda\times\overrightarrow{\nabla}P)
=\beta \widetilde{P}^s\overrightarrow{\nabla}\lambda\times\overrightarrow{\nabla}P+\beta \lambda^{r+2}\widehat{P}_s(\overrightarrow{\nabla}\lambda\times\overrightarrow{\nabla}P).$$
We deduce that $\beta \widetilde{P}^s\overrightarrow{\nabla}\lambda\times\overrightarrow{\nabla}P\in(\lambda\mathcal A^3)\times\mathcal B^3=\{0\},$ and therefore $\beta=0.$
\end{proof}

\begin{rem}
One can observe that the Poisson homological groups of GJPS, with the assumptions of the beginning of this section,  are free and of finite type as modules over the center of the Poisson algebra $K[P].$ This is not true for their Poisson homological groups.
\end{rem}

\subsubsection*{Acknowledgements}
A Part of this work has been done when visiting Institut des Hautes Etudes Scientifiques in France. I would like to thank this institute for the invitation and for good working conditions. I thank the referee of this paper for his comments and suggestions which are incoporated in this revised version. I have been partially financed by ``Fond National de Recherche (Luxembourg)''.
\noindent


\begin{thebibliography}{99}
\bibitem{arsc} Artin, M.; Schelter, W. F. ; Graded algebras of global dimension $3$. Adv. in Math. 66 (1987), no. 2, 171--216.

\bibitem{ATvdB} Tate, J.; Van den Bergh, Homological properties of Sklyanin algebras.  Invent. Math.  124  (1996),  no. 1-3, 619--647.

\bibitem{boc} R. Bocklandt, Graded Calabi Yau algebras of dimension 3, {J. Pure Appl. Algebra}, {Journal of Pure and Applied Algebra}, {212},{2008},
{1}, {14--32.}

\bibitem{bry} Brylinski, Jean-Luc A differential complex for Poisson manifolds. J. Differential Geom. 28 (1988), no. 1, 93--114.

\bibitem{BrZuk} Brylinsky, J.-L.; Zuckerman, G.; The outer derivation of a complex Poisson manifold.  J. Reine Angew. Math.  506  (1999), 181--189.

\bibitem{dol} Dolgushev, V.A.;  The Van den Bergh duality and the modular symmetry of a Poisson variety, accepted to Selecta Math.; arXiv:math/0612288.

\bibitem{dri} Drinfel'd, V. G.; Hamiltonian structures on Lie groups, Lie bialgebras and the geometric meaning of classical Yang-Baxter equations. Dokl. Akad. Nauk SSSR 268 (1983), no. 2, 285--287.

\bibitem{ELW} Evens, S.; Lu, J-H.; Weinstein, A.; Transverse measures, the modular class and a cohomology pairing for Lie algebroids.  Quart. J. Math. Oxford Ser. (2)  50  (1999),  no. 200, 417--436.

\bibitem{odfe1} Fe{\u\i}gin, B. L.; Odesski{\u\i}, A. V.; Vector bundles on an elliptic curve and {S}klyanin algebras. Topics in quantum groups and finite-type invariants. Amer. Math. Soc. Transl. Ser. 2, 185, 65--84 Amer. Math. Soc.Providence, RI (1998).

\bibitem{kiev} Fe{\u\i}gin, B. L.; Odesski{\u\i}, A. V.; Vector bundles on an elliptic curve and {S}klyanin algebras,
(Russian), prepr. BITP, Kiev, (1988).

\bibitem{ginz} Ginzburg, V. A.; Calabi-Yau algebras, arxiv:mth.AG/0612139.

\bibitem{gin} Ginzburg, V. L.; Weinstein, A.; Lie-Poisson structure on some Poisson Lie groups. J. Amer. Math. Soc. 5 (1992), no. 2, 445--453.

\bibitem{gur} Gurevich, D.; Saponov, P.; Quantization of pencils with a gl-type Poisson center and braided geometry. arXiv:1002.1594v1 [math.QA] 8 Feb 2010

\bibitem{khi1}{Khimshiashvili, G.}; {On one class of exact {P}oisson structures}, {Proc. A. Razmadze Math. Inst.}, {Georgian Academy of Sciences. Proceedings of A. Razmadze Mathematical Institute}, {119}, {1999}, {111--120}.

\bibitem{khi2} {Khimshiashvili, G.; Przybysz, R.}; {On generalized {S}klyanin algebras}, {Georgian Math. J.}, {Georgian Mathematical Journal}, {7}, {2000}, {4}, {689--700}.

\bibitem{kon} Kontsevich, M.; {Deformation quantization of {P}oisson manifolds}, {Lett. Math. Phys.}, {Letters in Mathematical Physics} {66}, {2003}, {3}, {157--216.}


\bibitem{lic} Lichnerowicz, A.; Les vari\'et\'es de Poisson et leurs alg\`ebres de Lie associe\'ees. (French) J. Differential Geometry 12 (1977), no. 2, 253--300.

\bibitem{stafnev} Nevins, T. A. and Stafford, J. T., Sklyanin algebras and {H}ilbert schemes of points, Advances in Mathematics, 210, 2007, 2, 405--478.

\bibitem{ode1}Odesski\u\i, A. V.; Fe\u\i gin, B. L.; Sklyanin's elliptic algebras. (Russian) Funktsional. Anal. i Prilozhen. 23 (1989), no. 3, 45--54, 96; translation in Funct. Anal. Appl. 23 (1989), no. 3, 207--214.

\bibitem{odru} Odesski\u\i, A. V.; Rubtsov, V. N.; Polynomial Poisson algebras with a regular structure of symplectic leaves. (Russian) Teoret. Mat. Fiz. 133 (2002), no. 1, 3--23.

\bibitem{ode3} Odesskii, A., Elliptic algebras, Russian Math. Surveys, \textbf{}, (2005).

\bibitem{ORT} Ortenzi, G; Rubtsov, V; Tagne Pelap, S. R.; On the Heisenberg invariance and the Elliptic Poisson tensors. arXiv:1001.4422v1 [math-ph] 25 Jan 2010.

\bibitem{pic} Pichereau, A.; Poisson (co)homology and isolated singularities. J. Algebra 299 (2006), no. 2, 747--777.

\bibitem{sky1} Sklyanin, E. K.; Some algebraic structures connected with the Yang-Baxter equation. (Russian) Funktsional. Anal. i Prilozhen. 16 (1982), no. 4, 27--34, 96.

\bibitem{sky2} Sklyanin, E. K.; Some algebraic structures connected with the Yang-Baxter equation. Representations of a quantum algebra. (Russian) Funktsional. Anal. i Prilozhen. 17 (1983), no. 4, 34--48.

\bibitem{smsf} Smith, S. P.; Stafford, J. T.; Regularity of the four-dimensional Sklyanin algebra. Compositio Math. 83 (1992), no. 3, 259--289.

\bibitem{stan} Staniszkis, J.M.; The $4$-dimensional Sklyanin algebra.  J. Algebra  167  (1994),  no. 1, 104--115.

\bibitem{pelap1}  Tagne Pelap, S.R.; Poisson (co)homology of polynomial Poisson algebras in dimension four: Sklyanin's case, J. Algebra (2009), doi:10.1016/j.jalgebra.2009.05.024.

\bibitem{pelap2} Tagne Pelap, S. R.; On the Hochschild Homology of Elliptic Sklyanin Algebras. Lett Math Phys (2009) 87:267-281, DOI 10.1007/s11005-009-0307-6

\bibitem{van} Van den Bergh, M.; Noncommutative homology of some three-dimensional quantum spaces. Proceedings of Conference on Algebraic Geometry and Ring Theory in honor of Michael Artin, Part III (Antwerp, 1992). $K$-Theory 8 (1994), no. 3, 213--230.

\bibitem{vanden} Van den Bergh, M.; A relation between Hochschild homology and cohomology for Gorenstein rings, Proc. Amer. Math. Soc., \textbf{126:5}, (1998), 1345--1348.
\bibitem{wei}Weibel, Charles A. An introduction to homological algebra. Cambridge Studies in Advanced Mathematics, 38.
    
\bibitem{weins} Weinstein, Alan; The modular automorphism group of a Poisson manifold, Journal of Geometry and Physics, 23, 1997, 3-4, 379--394.
 
\bibitem{xu1} Xu, P.; Gerstenhaber algebras and BV-algebras in Poisson geometry. Comm. Math. Phys. 200 (1999), no. 3, 545--560.
\end{thebibliography}
\end{document}